\newtheorem{theorem}{Theorem}[section]
\newtheorem{corollary}{Corollary}
\newtheorem{lemma}[theorem]{Lemma}
\newtheorem{proposition}{Proposition}
\theoremstyle{definition}
\newtheorem{remark}{Remark}
\title[Global solutions to quasi-linear wave equations] 
{Remarks on a system of quasi-linear wave equations 
in $3$D satisfying the weak null condition}
\author[K. Hidano and D. Zha]{}
\subjclass{Primary: 35L52, 35L15; Secondary: 35L72.}
 \keywords{Global existence, quasi-linear wave equations, weak null condition.}
 \email{hidano@edu.mie-u.ac.jp}
 \email{ZhaDongbing@163.com}
\thanks{$^*$ Corresponding author}
\DeclareMathOperator{\esssup}{ess\sup}
\begin{document}
\maketitle

\centerline{\scshape Kunio Hidano$^*$}
\medskip
{\footnotesize
 \centerline{Department of Mathematics, Faculty of Education}
   \centerline{Mie University}
   \centerline{1577 Kurima-machiya-cho Tsu, Mie Prefecture 514-8507, Japan}
} 

\medskip

\centerline{\scshape Dongbing Zha}
\medskip
{\footnotesize
 \centerline{Department of Applied Mathematics}
   \centerline{Donghua University}
   \centerline{Shanghai 201620, PR China}
}

\bigskip


\begin{abstract}
We give an alternative proof of the global existence result 
originally due to Hidano and Yokoyama for the Cauchy problem 
for a system of quasi-linear wave equations 
in three space dimensions satisfying the weak null condition. 
The feature of the new proof lies in that it never uses 
the Lorentz boost operator in the energy integral argument. 
The proof presented here has an advantage over 
the former one in that 
the assumption of compactness of the support of data 
can be eliminated 
and the amount of regularity of data can be lowered 
in a straightforward manner. 
A recent result of Zha for the scalar unknowns is also refined.
\end{abstract}

\section{Introduction}
We consider the Cauchy problem for a system of 
quasi-linear wave equations in three space dimensions 
satisfying the weak null condition 
given by Lindblad and Rodnianski \cite{LR}
\begin{equation}\label{1}
\begin{cases}
\displaystyle{
\Box u_1
+
G_1^{11,\alpha\beta\gamma}
(\partial_\gamma u_1)
(\partial_{\alpha\beta}^2 u_1)
+
G_1^{21,\alpha\beta\gamma}
(\partial_\gamma u_2)
(\partial_{\alpha\beta}^2 u_1)
             }\\
\displaystyle{
+
H_1^{11,\alpha\beta}
(\partial_\alpha u_1)
(\partial_\beta u_1)
+
H_1^{12,\alpha\beta}
(\partial_\alpha u_1)
(\partial_\beta u_2)
+
H_1^{22,\alpha\beta}
(\partial_\alpha u_2)
(\partial_\beta u_2)
=0
               },\\
\displaystyle{
\Box u_2
+
G_2^{12,\alpha\beta\gamma}
(\partial_\gamma u_1)
(\partial_{\alpha\beta}^2 u_2)
+
G_2^{22,\alpha\beta\gamma}
(\partial_\gamma u_2)
(\partial_{\alpha\beta}^2 u_2)
                }\\
\displaystyle{
+
H_2^{12,\alpha\beta}
(\partial_\alpha u_1)
(\partial_\beta u_2)
+
H_2^{11,\alpha\beta}
(\partial_\alpha u_1)
(\partial_\beta u_1)
+
H_2^{22,\alpha\beta}
(\partial_\alpha u_2)
(\partial_\beta u_2)
=0
               }
\end{cases}
\end{equation}
with data given at $t=0$. Here, 
$\Box:=\partial_t^2-\Delta$, 
$\partial_0:=\partial/\partial t$, 
$\partial_i:=\partial/\partial x_i$, $i=1,2,3$. 
We always use the summation convention: 
when the same index is above and below, 
summation over this index is assumed from $0$ to $3$. 
Since our concern is in classical solutions, 
we may assume the symmetry condition 
without loss of generality: there hold 
$G_1^{11,\alpha\beta\gamma}=G_1^{11,\beta\alpha\gamma}$, 
$G_1^{21,\alpha\beta\gamma}=G_1^{21,\beta\alpha\gamma}$, 
and 
$G_2^{12,\alpha\beta\gamma}=G_2^{12,\beta\alpha\gamma}$, 
$G_2^{22,\alpha\beta\gamma}=G_2^{22,\beta\alpha\gamma}$ 
for all $\alpha,\beta,\gamma=0,\dots,3$.  
Using the energy inequality obtained by the method of ghost weight due to Alinhac 
(see \cite{Al2001}, \cite{Al2010}), 
Yokoyama and the first author have proved in \cite{HY2017}:
\begin{theorem}\label{theorem1.1}
Suppose 
\begin{align}
&
G_1^{11,\alpha\beta\gamma}
X_\alpha X_\beta X_\gamma
=
G_1^{21,\alpha\beta\gamma}
X_\alpha X_\beta X_\gamma
=
G_2^{22,\alpha\beta\gamma}
X_\alpha X_\beta X_\gamma
=0,\label{2}\\
&
H_1^{11,\alpha\beta}X_\alpha X_\beta
=
H_1^{12,\alpha\beta}X_\alpha X_\beta
=
H_1^{22,\alpha\beta}X_\alpha X_\beta
=
H_2^{22,\alpha\beta}X_\alpha X_\beta
=0\label{3}
\end{align}
for any $X=(X_0,\dots,X_3)\in{\mathbb R}^4$ 
with 
$X_0^2=X_1^2+X_2^2+X_3^2$. 
Let 
$0<\eta<1/6$, 
$0<\delta<1/6$ so that 
$\eta+2\delta<1/2$. 
Then, there exist constants 
$C>0$, $0<\varepsilon<1$ 
depending only on 
the coefficients of the system $(\ref{1})$, 
$\delta$, and $\eta$ 
such that 
if compactly supported smooth data satisfy 
$W_4(u_1(0))+W_4(u_2(0))<\varepsilon$, 
then the Cauchy problem for $(\ref{1})$ 
admits a unique global smooth solution 
$(u_1(t,x),u_2(t,x))$ satisfying for all 
$t>0$, $T>0$
\begin{equation}\label{4}
\begin{split}
&
W_4(u_1(t))
+
(1+t)^{-\delta}
W_4(u_2(t))\\
&
\hspace{0.2cm}
+
\sum_{i=1}^3
\sum_{|a|\leq 3}
\biggl(
\|
\langle t-r\rangle^{-(1/2)-\eta}
T_i \Gamma^a u_1
\|_{L^2((0,\infty)\times{\mathbb R}^3)}\\
&
\hspace{2.2cm}
+
(1+T)^{-\delta}
\|
\langle t-r\rangle^{-(1/2)-\eta}
T_i \Gamma^a u_2
\|_{L^2((0,T)\times{\mathbb R}^3)}
\biggr)\\
&
\hspace{0.1cm}
\leq
C\bigl(
W_4(u_1(0))+W_4(u_2(0))
\bigr).
\end{split}
\end{equation}
Here $T_i=\partial_i+(x_i/|x|)\partial_t$. 
\end{theorem}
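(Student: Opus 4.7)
My plan runs a continuity argument on the very norm appearing on the left-hand side of \eqref{4}. Suppose that a local smooth solution exists on some maximal interval $[0,T^\ast)$ and that the norm is a priori bounded by twice its target, i.e.\ by $2C(W_4(u_1(0))+W_4(u_2(0)))$; I then aim to improve the bound to $C(W_4(u_1(0))+W_4(u_2(0)))$, closing the bootstrap and forcing $T^\ast=\infty$ for $\varepsilon$ sufficiently small.

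The main technical tool is an Alinhac-type ghost-weight energy inequality, applied after commuting the system with the reduced family of fields $\Gamma\in\{\partial_\mu,\,\Omega_{ij}=x_i\partial_j-x_j\partial_i,\,S=t\partial_t+x\cdot\nabla\}$, \textbf{deliberately omitting} the Lorentz boosts $L_i=t\partial_i+x_i\partial_t$, as advertised in the abstract. Multiplying the equation for $\Gamma^a u_k$, $|a|\le 3$, by $e^{-q(r-t)}\partial_t\Gamma^a u_k$, with $q$ a bounded, increasing function of its argument, produces in the energy identity the non-negative spacetime integral of $\langle t-r\rangle^{-1-2\eta}|T_i\Gamma^a u_k|^2$, which is exactly the quantity appearing in \eqref{4}. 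The absence of boosts forces one to replace the standard Klainerman--Sobolev pointwise estimate by a weaker weighted version obtainable from trace inequalities on spheres, schematically $\langle t+r\rangle\langle t-r\rangle^{1/2}|\partial u(t,x)|\le C\sum_{|a|\le 2}\|\langle t-r\rangle^{1/2}\partial\Gamma^a u(t)\|_{L^2}$, combined with the extra $\langle t-r\rangle$-gain that good derivatives enjoy via $\partial_\alpha=(x_\alpha/r)(\partial_t+\partial_r)-T_\alpha$ in the wave zone.

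Conditions \eqref{2}--\eqref{3} are precisely the statement that $G_1^{11},G_1^{21},G_2^{22}$ and $H_1^{11},H_1^{12},H_1^{22},H_2^{22}$ are null forms: each such product decomposes into pieces in which at least one derivative is good ($T_i$ or $\partial_t+\partial_r$), and therefore can be paired against the ghost-weight gain by Cauchy--Schwarz. Consequently the forcing in the energy identity for $\Gamma^a u_1$ consists only of null nonlinearities and closes to a uniform-in-$t$ bound for $W_4(u_1(t))$ after standard manipulations. For $\Gamma^a u_2$ the coefficients $G_2^{12},H_2^{12},H_2^{11}$ need not be null, but the dangerous factors $\partial u_1$ appearing in them have both pointwise decay and a square-integrable tangential part inherited from the first stage; a Gronwall argument then yields the accepted $(1+t)^\delta$-growth of $u_2$, and the constraint $\eta+2\delta<1/2$ arises when one matches this growth against the $\langle t-r\rangle^{-1/2-\eta}$ weight in the tangential term so as to avoid a second logarithmic loss.

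The main obstacle I foresee is handling the quasi-linear principal part without Lorentz boosts: the commutator of $\Gamma^a$ with a term of the form $G^{k\ell,\alpha\beta\gamma}(\partial_\gamma u_\ell)\partial_{\alpha\beta}^2$ generates borderline error terms in which the dangerous factor $\partial u_2$ may appear without a companion good derivative. These must be re-expressed, invoking once more \eqref{2}, as contractions along the null direction $\partial_t+\partial_r$, which pair with a $T_i$-factor once one substitutes $\partial_\alpha=(x_\alpha/r)(\partial_t+\partial_r)-T_\alpha$; the resulting spacetime integral is then absorbed against the ghost-weight norm after Cauchy--Schwarz in $(t,x)$. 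Arranging these contributions so that the absolute constant recovered at the end of the bootstrap coincides with the $C$ postulated in the a priori assumption is the delicate book-keeping that ultimately fixes the smallness threshold on $\varepsilon$.
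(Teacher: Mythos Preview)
Your proposal rests on a misreading of the statement. Theorem~1.1 is not proved in this paper at all: it is quoted from \cite{HY2017}, and the paper says explicitly that ``the proof of Theorem~\ref{theorem1.1} fully exploits the Lorentz invariance in the sense that it uses the operators $\Omega_{ij}$ and $L_k$.'' The norm $W_4$ in the statement is defined via the full family $\Gamma$, which \emph{includes} the boosts $L_k$ (see (\ref{6})), and the sum $\sum_{|a|\le 3}$ in (\ref{4}) ranges over all products $\Gamma^a$, boosts included. Your plan to ``deliberately omit'' the $L_k$ therefore cannot establish (\ref{4}) as written: you would at best be bounding the smaller boost-free norm $N_4$ of (\ref{7}), which is the content of Theorem~\ref{theorem1.2}, not Theorem~\ref{theorem1.1}. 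The phrase ``as advertised in the abstract'' refers to the paper's new result, not to the cited one you are asked about.

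Even if one reinterprets your outline as an attempt at the boost-free Theorem~\ref{theorem1.2}, the sketch is missing the structural ingredients the paper actually needs once $L_k$ is dropped. The schematic pointwise bound you invoke, $\langle t+r\rangle\langle t-r\rangle^{1/2}|\partial u|\lesssim \sum_{|a|\le 2}\|\langle t-r\rangle^{1/2}\partial\Gamma^a u\|_{L^2}$, is a Klainerman--Sobolev estimate that relies on the boosts; without them the paper instead introduces the auxiliary weighted norm $M_4$ (see (\ref{27})), controls it through the Klainerman--Sideris inequality (Lemma~\ref{lemma2.8}) in Proposition~\ref{proposition1}, and couples the ghost-weight energy with a separate space-time $L^2$ (localized energy) estimate from Lemma~\ref{lemma2.7}, closed via a dyadic-in-time decomposition (see (\ref{124})). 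None of this machinery appears in your plan, and the simple ghost-weight plus Gronwall scheme you describe does not close at the top order without it.
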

\begin{remark}\label{remark1}
In Section 3 of \cite{HY2017}, 
thanks to compactness of the support of initial data 
together with the finite speed of propagation, 
the proof of Theorem \ref{theorem1.1} was able to employ 
the standard local existence theorem 
in solving locally (in time) the Cauchy problem with data given at $t=0$ 
and in continuing the local solutions to a larger strip, 
though some partial differential operators 
with ``weight'' (see just below) were naturally used. 
We should remark that the constant $\varepsilon$ in the above theorem 
is independent of the ``radius'' of the support of given data 
$(u_i(0),\partial_t u_i(0))=(f_i,g_i)$ $(i=1,2)$, 
that is, 
$R_*:=
\inf
\bigl\{\,r>0\,:
\,{\rm supp}\,\{f_1,g_1,f_2,g_2\}
\subset 
\{
x\in{\mathbb R}^3:|x|<r
\}
\bigr\}$.
\end{remark}
Here we explain the notation used in the statement of 
Theorem \ref{theorem1.1}. We set 
\begin{align}
&
E_1(u(t))
:=
\frac12
\int_{{\mathbb R}^3}
\bigl(
(\partial_t u(t,x))^2
+
|\nabla u(t,x)|^2
\bigr)dx,\label{5}\\
&
W_\kappa(u(t))
:=
\sum_{|a|\leq \kappa-1}
E_1^{1/2}(\Gamma^a u(t)),
\quad \kappa=2,3,\dots\label{6}
\end{align}
By $\Gamma$, we mean the set of the operators 
$\partial_\alpha$ $(\alpha=0,\dots,3)$, 
$\Omega_{ij}:=x_i\partial_j-x_j\partial_i$ 
$(1\leq i<j\leq 3)$, 
$L_k:=x_k\partial_t+t\partial_k$ 
$(k=1,2,3)$, 
and $S:=t\partial_t+x\cdot\nabla$. 
Also, for a multi-index $a$, 
$\Gamma^a$ stands for any product 
of the $|a|$ these operators. 
We remark that $\partial_t^ku_i(0,x)$ for $i=1,2$ and $k=2,3,4$ 
can be calculated with the help of the equation (\ref{1}), 
and thus the quantity $W_4(u_1(0))+W_4(u_2(0))$ appearing in (\ref{1}) 
is determined by the given initial data. 

We note that 
the proof of Theorem \ref{theorem1.1} 
fully exploits the Lorentz invariance 
in the sense that it uses 
the operators $\Omega_{ij}$ and $L_k$, 
in addition to $\partial_\alpha$ and $S$. 
When it comes to the Cauchy problem for a nonrelativistic system 
satisfying the weak null condition 
(see, e.g., (2.8) of \cite{PS2013}) or 
the initial-boundary value problems 
in a domain exterior to an obstacle 
(see, e.g., \cite{ST}, \cite{KSS}, \cite{MS}), 
the use of $L_k$ should be avoided. 
The purpose of this paper is to 
revisit the Cauchy problem for (\ref{1}) 
and prove global existence without relying upon $L_k$. 
Moreover, we also aim at eliminating compactness 
of the support of data 
and lowering the amount of regularity of data. 
To state the main theorem precisely, 
we set the notation. 
As in \cite{H2016}, 
we define
\begin{equation}\label{7}
\begin{split}
&
N_1(u(t))
:=
\sqrt{E_1(u(t))},\,
N_2(u(t))
:=
\left(
\sum_{|a|+|b|+d\leq 1}
E_1(\partial_x^a \Omega^b S^d u(t))
\right)^{1/2},\\
&
N_4(u(t))
:=
\left(
\sum_{{|a|+|b|+d\leq 3}\atop{d\leq 1}}
E_1(\partial_x^a \Omega^b S^d u(t))
\right)^{1/2},
\end{split}
\end{equation}
where, for $a=(a_1,a_2,a_3)$ and $b=(b_1,b_2,b_3)$, 
$\partial_x^a:=\partial_1^{a_1}\partial_2^{a_2}\partial_3^{a_3}$, 
$\Omega^b:=\Omega_{12}^{b_1}\Omega_{13}^{b_2}\Omega_{23}^{b_3}$. 
We also define for a pair of 
time-independent functions $(v(x),w(x))$
\begin{equation}\label{8}
\begin{split}
&D(v,w)\\
:=&
\left(
\sum_{{|a|+|b|+d\leq 3}\atop{d\leq 1}}
\left(
\int_{{\mathbb R}^3}
|\nabla\partial_x^a\Omega^b\Lambda^d v(x)|^2dx
+
\int_{{\mathbb R}^3}
|\partial_x^a\Omega^b\Lambda^d w(x)|^2dx
\right)
\right)^{1/2}.
\end{split}
\end{equation}
Here, we have set $\Lambda:=x\cdot\nabla$, which can be regarded 
as a time-independent analogue of $S$. 
Since $\partial_t Su=\partial_t u+\Lambda\partial_t u$ 
at $t=0$, there obviously exists a numerical constant 
$C_D>0$ such that 
$N_4(u(0))\leq C_DD(u(0),\partial_tu(0))$ 
for smooth functions $u(t,x)$.
\begin{theorem}\label{theorem1.2}
Suppose $(\ref{2})$, $(\ref{3})$ 
for any $X=(X_0,\dots,X_3)\in{\mathbb R}^4$ 
with 
$X_0^2=X_1^2+X_2^2+X_3^2$. 
Then, there exists $\varepsilon\in (0,1)$ 
such that if $f_1,f_2\in L^6({\mathbb R}^3)$ and 
$D(f_1,g_1)+D(f_2,g_2)<\varepsilon$, 
then the Cauchy problem for $(\ref{1})$ 
with data 
$(u_i,\partial_t u_i)=(f_i,g_i)$ 
$(i=1,2)$ 
given at $t=0$ 
admits a unique global solution 
$u(t,x)=(u_1(t,x),u_2(t,x))$ satisfying 
\begin{equation}
\esssup\displaylimits_{t>0}
{\mathcal N}(u(t))
+
{\mathcal G}_T(u)
+
{\mathcal L}_T(u)
\leq
C
\sum_{i=1}^2
D(f_i,g_i)\label{9}
\end{equation}
for all $T>0$, with a constant $C>0$ independent of $T$. 
\end{theorem}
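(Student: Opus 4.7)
The plan is to run a continuity/bootstrap argument on the quantity $\mathcal{N}(u(t))+\mathcal{G}_T(u)+\mathcal{L}_T(u)$. Assume a local solution exists on $[0,T^*)$ and that the above quantity is bounded by $A\varepsilon$ on $[0,T]$ for some absolute $A>0$; the goal is to improve this to $(A/2)\varepsilon$, which by standard continuity closes the bootstrap and yields global existence. Since the available commuting vector fields are restricted to $\Gamma\in\{\partial_\alpha,\Omega_{ij},S\}$, every step must be carried out without invoking $L_k$; in particular, Klainerman--Sobolev must be replaced by weighted Sobolev-type inequalities on cones/spheres relying only on $\partial$, $\Omega$, $S$, as developed in \cite{H2016}.

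First I would derive the higher-order ghost-weight energy inequality of Alinhac's type for each equation of \eqref{1} after commuting with $\Gamma^a=\partial_x^a\Omega^b S^d$, $|a|+|b|+d\leq 3$, $d\leq 1$. The condition $d\leq 1$ reflects that $S$ is scaling and we only want to pay it once, so commutators $[S,\Box]$, $[\Omega,\Box]$, and $[\partial,\Box]$ produce controllable terms. This yields, schematically,
\begin{equation*}
\frac{d}{dt}E_1(\Gamma^a u_i(t)) + \sum_{j=1}^3\int_{\mathbb{R}^3}\frac{|T_j\Gamma^a u_i|^2}{\langle t-r\rangle^{1+2\eta}}\,dx \leq \int_{\mathbb{R}^3}|F_i^{(a)}|\,|\partial_t\Gamma^a u_i|\,dx,
\end{equation*}
where $F_i^{(a)}$ collects commutator and nonlinear source terms. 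Integration in time provides the ghost-weight $L^2_{t,x}$ control of $T_j\Gamma^a u_i$ that appears in $\mathcal{G}_T(u)$.

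The crucial estimate of the nonlinearities splits into two tiers, reflecting the weak null hierarchy. For $u_1$, the hypotheses \eqref{2}--\eqref{3} ensure that \emph{every} quadratic term in its equation is a null form; standard null-form identities rewrite each such term as a sum of products involving at least one ``good'' derivative $T_j$ or $S/t$, so Cauchy--Schwarz pairs one factor against the ghost-weight integral and the other against a pointwise decay factor. One then obtains a uniform-in-$T$ energy bound for $u_1$, and the quasi-linear coefficient $G_1^{21}(\partial u_2)$ is handled via the standard ``modified energy'' perturbation of the metric, since the symbol satisfies the null condition on the light cone. For $u_2$, the bad terms $G_2^{12}(\partial u_1)\partial^2 u_2$, $H_2^{12}(\partial u_1)(\partial u_2)$, and $H_2^{11}(\partial u_1)^2$ each carry at least one factor of $\partial u_1$ (never a bare $\partial u_2$ times $\partial u_2$), so they can be closed by feeding in the already-controlled pointwise decay of $\partial u_1$. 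A Grönwall argument then permits $u_2$ to grow at most like $(1+t)^{C\varepsilon}$, which is absorbed by the small weight $(1+t)^{-\delta}$ built into $\mathcal{N}$ and $\mathcal{G}_T$.

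Two technical points form the main obstacles. The first is obtaining pointwise decay for $\partial u_1$ of the form $|\partial u_1(t,x)|\lesssim\varepsilon(1+t+|x|)^{-1}(1+|t-|x||)^{-1/2-\eta}$ (with a mild $\eta$ loss) using \emph{only} $\partial,\Omega,S$; this is done by combining the weighted Sobolev estimate on spheres controlled by $\Omega^b$, the radial trade-off coming from $S=t\partial_t+r\partial_r$ (which gives $\partial_r$-control away from the cone and near the cone requires an argument based on integrating along outgoing rays, in the spirit of \cite{H2016}), and the ghost-weight $L^2_{t,x}$ term. The second obstacle is dropping the compact-support assumption: without finite propagation to keep things localized, one must set up the local existence theorem directly in the weighted space defined by $D$ and show that the nonlinear terms are well-defined under just $f_i\in L^6$ plus $D(f_i,g_i)<\infty$, using Sobolev embedding $\dot H^1\hookrightarrow L^6$ to give meaning to $u_i$ itself (rather than only its derivatives) in the quasi-linear estimates and in the $\mathcal{L}_T$ component. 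Once these two ingredients are in hand, the bootstrap closes and \eqref{9} follows.
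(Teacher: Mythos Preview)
Your bootstrap skeleton and the ghost-weight/null-form tier structure are correct in spirit, but two load-bearing pieces of the paper's machinery are missing from your plan, and without them the argument does not close.

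First, you never say where $\mathcal{L}_T(u)$ comes from. In the paper it is \emph{not} a by-product of the ghost-weight energy; it is obtained from a separate variable-coefficient space-time $L^2$ estimate (Lemma~\ref{lemma2.7}, of Keel--Smith--Sogge/Metcalfe--Sogge type) applied to $P=\Box+h^{\alpha\beta}\partial^2_{\alpha\beta}$. This localized-energy norm is then indispensable in the interior region $\{|x|<(t+1)/2\}$, where one writes factors like $r^{(1/2)+\theta}\langle t-r\rangle^{1-\theta}$ on the low-derivative piece and $r^{-(1/2)+\mu}$ on the high-derivative piece (see e.g.\ (\ref{90}), (\ref{93})--(\ref{98})); the resulting time integrals are summed via a dyadic decomposition of $(0,t)$ as in (\ref{124}). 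Your outline does not provide any substitute for this step.

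Second, your route to pointwise decay---``integrating along outgoing rays'' to get $(1+t+|x|)^{-1}\langle t-r\rangle^{-1/2-\eta}$ without $L_k$---is not what the paper does and is not justified here. The paper replaces the Klainerman--Sobolev inequality by the Klainerman--Sideris inequality (Lemma~\ref{lemma2.8}) together with the auxiliary weighted norm $M_4(u(t))=\sum_{|a|\leq 2}\|\langle t-r\rangle\partial^2_{\delta j}{\bar Z}^a u(t)\|_{L^2}$. One first shows $\mathcal{M}(u(t))\leq C\mathcal{N}(u(t))$ under the bootstrap (Proposition~\ref{proposition1} and Corollary~\ref{corollary1}), and then the weighted Sobolev/trace inequalities of Lemmas~\ref{lemma2.4}--\ref{lemma2.6} convert $\mathcal{N}+\mathcal{M}$ into the pointwise quantity $\langle\!\langle u(t)\rangle\!\rangle$ (see (\ref{172})). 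This is the mechanism that yields decay using only $\partial$, $\Omega$, $S$. Finally, for non-compact data the paper does \emph{not} set up local existence directly in the $D$-space; it first proves global existence for $C_0^\infty$ data, then approximates general $(f_i,g_i)$ in the $D$-seminorm and passes to the limit via a lowest-order Cauchy estimate for differences (see (\ref{179})). Your proposed direct route would require a separate local theory that you have not supplied.
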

For the definition of 
${\mathcal N}(u)$, 
${\mathcal G}_T(u)$, and ${\mathcal L}_T(u)$, 
see (\ref{eqn:SN1}), (\ref{87}), and (\ref{88}), respectively. 
(We remark that the constant $\delta$ appearing in 
(\ref{eqn:SN1}), (\ref{87})--(\ref{88}) is smaller than in 
Theorem \ref{theorem1.1}.) 
Compared with $W_4(u_1(0))$ (see Theorem \ref{theorem1.1} above), 
the semi-norm $D(v,w)$ has a couple of advantages. 
Firstly, 
by the standard way we can easily find a sequence 
$\{(v_j,w_j)\}\in C_0^\infty({\mathbb R}^3)\times C_0^\infty({\mathbb R}^3)$ 
such that 
$D(v-v_j,w-w_j)\to 0$ as $j\to\infty$ 
when $v\in L^6({\mathbb R}^3)$ and 
$\nabla\partial_x^a\Omega^b\Lambda^d v$, 
$\partial_x^a\Omega^b\Lambda^d w
\in 
L^2({\mathbb R}^3)$ 
for any $|a|+|b|+d\leq 3$ with $d\leq 1$. 
(We remark that the corresponding procedure becomes rather complicated 
when we employ $W_4$ (see (\ref{6})), as in \cite{HY2017}, to measure the size of data.) 
We are naturally led to 
proving Theorem \ref{theorem1.2} first for compactly supported smooth data 
(because the proof of global existence becomes easier for 
such initial data), 
and then we use this helpful property 
to complete its proof by passing to the limit of 
a sequence of compactly supported 
(for any fixed time) smooth solutions. 
See Section 8. 
Secondly, thanks to the limitation 
of the number of $\Lambda$ to $1$ 
in the definition of $D(v,w)$, 
we easily see that 
the size condition in Theorem \ref{theorem1.2} 
is satisfied whenever 
the initial data is radially symmetric about $x=0$ and 
its norm with the low weight 
$\langle x\rangle:=\sqrt{1+|x|^2}$ 
\begin{equation}
\sum_{i=1,2}
\biggl(
\sum_{1\leq |a|\leq 4}
\|\langle x\rangle\partial_x^a f_i\|_{L^2}
+
\sum_{|a|\leq 3}
\|\langle x\rangle\partial_x^a g_i\|_{L^2}
\biggr)\label{10}
\end{equation}
is small enough. 
Note that, thanks to its low weight, 
we can allow such an oscillating and slowly decaying data as 
$g_1(x)=\langle x\rangle^{-d}\sin\langle x\rangle$ with 
$d>5/2$. 
Naturally, it results from 
the limitation 
of the number of $S$ to $1$ 
in the definition of ${\mathcal N}(u)$, 
${\mathcal G}_T(u)$, and ${\mathcal L}_T(u)$. 

Note that 
by setting $H_2^{11,\alpha\beta}=0$ for all $\alpha$, $\beta$ 
and choosing the trivial data 
$u_2(0,x)=\partial_t u_2(0,x)=0$ 
and thus considering the trivial solution 
$u_2(t,x)\equiv 0$, 
we can go back to the wave equation for the scalar unknowns 
\begin{equation}
\Box u
+
G^{\alpha\beta\gamma}
(\partial_\gamma u)
(\partial_{\alpha\beta}^2 u)
+
H^{\alpha\beta}
(\partial_\alpha u)
(\partial_\beta u)
=0,
\,\,t>0,\,x\in{\mathbb R}^3\label{11}
\end{equation}
and thus obtain: 
\begin{theorem}\label{theorem1.3}
Suppose the symmetry condition 
$G^{\alpha\beta\gamma}=G^{\beta\alpha\gamma}$. 
Also, suppose the null condition: there holds 
\begin{equation}
G^{\alpha\beta\gamma}X_\alpha X_\beta X_\gamma
=
H^{\alpha\beta}X_\alpha X_\beta=0
\label{12}
\end{equation}
for any $X=(X_0,\dots,X_3)\in{\mathbb R}^4$ 
with $X_0^2=X_1^2+X_2^2+X_3^2$.  
Let $\delta$, $\eta$ and $\mu$ be 
sufficiently small positive constants. 
Then, there exists $\varepsilon\in (0,1)$ 
such that if $f\in L^6({\mathbb R}^3)$ and 
$D(f,g)\leq\varepsilon$, 
then 
the Cauchy problem $(\ref{11})$ with initial data 
$(f,g)$ given at $t=0$ admits 
a unique global solution $u(t,x)$ 
satisfying 
\begin{equation}\label{13}
\begin{split}
&
\esssup\displaylimits_{t>0}N_4(u(t))\\
&
\hspace{0.2cm}
+
\biggl(
\int_0^\infty
\sum_{i=1}^3
\sum_{{|a|+d\leq 3}\atop{d\leq 1}}
\|
\langle t-r\rangle^{-(1/2)-\eta}
T_i{\bar Z}^aS^d u(t)
\|_{L^2({\mathbb R}^3)}^2
dt
\biggr)^{1/2}\\
&
\hspace{0.2cm}
+
\biggl(
\int_0^\infty
\sum_{i=1}^3
\sum_{{|a|+d\leq 2}\atop{d\leq 1}}
\|
\langle t-r\rangle^{-(1/2)-\eta}
T_i\partial_t{\bar Z}^aS^d u(t)
\|_{L^2({\mathbb R}^3)}^2
dt
\biggr)^{1/2}\\
&
\hspace{0.2cm}
+
\sup_{t>0}
\langle t\rangle^{-\mu-\delta}
\biggl(
\int_0^t
\sum_{{|a|+d\leq 3}\atop{d\leq 1}}
\biggl(
\|
r^{-(3/2)+\mu}
{\bar Z}^a S^d u(\tau)
\|_{L^2({\mathbb R}^3)}^2\\
&
\hspace{4.5cm}
+
\|
r^{-(1/2)+\mu}
\partial{\bar Z}^a S^d u(\tau)
\|_{L^2({\mathbb R}^3)}^2
\biggr)
d\tau
\biggr)^{1/2}\\
&
\hspace{0.1cm}
\leq
C
D(f,g).
\end{split}
\end{equation}
\end{theorem}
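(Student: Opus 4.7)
The plan is to derive Theorem~\ref{theorem1.3} as a direct specialization of Theorem~\ref{theorem1.2}, following the paragraph immediately preceding its statement. First, I would set $H_2^{11,\alpha\beta}=0$ for all $\alpha,\beta$, identify $G_1^{11,\alpha\beta\gamma}=G^{\alpha\beta\gamma}$ and $H_1^{11,\alpha\beta}=H^{\alpha\beta}$, and choose the remaining coefficients $G_1^{21}$, $G_2^{12}$, $G_2^{22}$, $H_1^{12}$, $H_1^{22}$, $H_2^{12}$, $H_2^{22}$ to be identically zero. The symmetry hypothesis required by Theorem~\ref{theorem1.2} is then inherited from the symmetry of $G$, while the scalar null condition (\ref{12}) on $G$ and $H$ becomes precisely (\ref{2})--(\ref{3}) restricted to the surviving coefficients. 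I would take initial data $(f_1,g_1)=(f,g)$ and $(f_2,g_2)=(0,0)$, so that $D(f_1,g_1)+D(f_2,g_2)=D(f,g)<\varepsilon$ meets the size hypothesis of Theorem~\ref{theorem1.2}.

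Next, I would apply Theorem~\ref{theorem1.2} to obtain a unique global solution $(u_1,u_2)$ of (\ref{1}), and observe that the trivial extension $(u,0)$ of any classical solution $u$ of (\ref{11}) also solves (\ref{1}): every nonlinear term in the second equation contains a factor of $u_2$ or $\partial u_2$ except for $H_2^{11,\alpha\beta}(\partial_\alpha u_1)(\partial_\beta u_1)$, which vanishes by our choice $H_2^{11,\alpha\beta}=0$. By local uniqueness for the quasi-linear system (a standard fact for symmetric hyperbolic systems), $u_2\equiv 0$ and $u_1$ coincides with the local solution of (\ref{11}) on any common interval of existence; the global existence supplied by Theorem~\ref{theorem1.2} then propagates this identity for all $t>0$, producing a global solution $u:=u_1$ of (\ref{11}).

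Finally, I would read off the estimate (\ref{13}) from (\ref{9}). Unpacking the definitions (\ref{eqn:SN1}), (\ref{87}), (\ref{88}), the norm $\mathcal{N}(u)$ controls $\esssup_{t>0}N_4(u_1(t))$; the ghost-weight quantity $\mathcal{G}_T(u)$ controls the two space-time $L^2$ sums in (\ref{13}) involving $T_i\bar Z^a S^d u_1$ and $T_i\partial_t\bar Z^a S^d u_1$; and $\mathcal{L}_T(u)$ matches the $r$-weighted space-time term together with its $\sup_{t>0}\langle t\rangle^{-\mu-\delta}$ prefactor. Since $u_1$ plays the role of the fully null unknown in Theorem~\ref{theorem1.2}, no $\langle t\rangle^\delta$ growth factor is needed for its ghost-weight piece, matching the form of (\ref{13}) exactly. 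The main (and essentially only) obstacle in this specialization is the bookkeeping required to verify that the ranges of multi-indices and the admissible parameters $\delta,\eta,\mu$ inside the definitions of $\mathcal{N}$, $\mathcal{G}_T$, $\mathcal{L}_T$ do cover those appearing in (\ref{13}); beyond this verification, no further argument is needed.
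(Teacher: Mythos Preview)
Your proposal is correct and follows exactly the approach the paper takes: Theorem~\ref{theorem1.3} is obtained from Theorem~\ref{theorem1.2} by setting $H_2^{11,\alpha\beta}=0$, taking trivial data for $u_2$, and reading off the scalar estimate from (\ref{9}) once $u_2\equiv 0$. The paper does not give a separate proof beyond the sentence preceding the statement, so your level of detail already exceeds what the authors provide.
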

See the beginning of the next section for 
the definition of ${\bar Z}$. 
This improves Theorem 1.1 of the second author \cite{Zha} 
which says global existence of solutions to (\ref{11}) 
in the absence of the semi-linear term 
$H^{\alpha\beta}(\partial_\alpha u)(\partial_\beta u)$ 
for small data with higher regularity than is assumed in 
Theorem \ref{theorem1.3}. 
Since we no longer assume compactness of the support 
of initial data, 
Theorem \ref{theorem1.3} is also an improvement of global existence 
results of \cite{Al2010} and \cite{HY2017} for (\ref{11}) 
(see \cite[p.\,94]{Al2010} and \cite[Theorem 1.5]{HY2017}).

The operators $L_k$ together with the other elements of $\Gamma$ 
played an essential role 
in the proof of Theorem \ref{theorem1.1}. 
Namely, the use of all the elements of $\Gamma$ was crucial 
for the purpose of getting time decay estimates 
for local solutions with the help of 
the inequality of Klainerman \cite{Kl87} and its $H^1$--$L^q$ version 
due to Ginibre and Velo \cite{GV}. 
Since we avoid the use of the operators $L_k$, 
some good substitutes for these inequalities are necessary. 
In fact, 
there already exist two major ways of 
obtaining time decay estimates 
without relying upon $L_k$. 
One is to use point-wise decay estimates 
for homogeneous and inhomogeneous wave equations 
(see, e.g., \cite{Y}). 
The other is to use the Klainerman-Sideris inequality \cite{KS} 
in combination with 
some Sobolev-type inequalities with weights such as 
$\langle t-r\rangle$, 
$\langle r\rangle\langle t-r\rangle^{1/2}$ 
(see, e.g., \cite{SiderisTu}). 
As in \cite{Zha}, 
we proceed along the latter approach 
to compensate for the absence of $L_k$ in the list 
of the available differential operators 
and intend to combine the ghost weight method of Alinhac 
with the Klainerman-Sideris method. 
Actually, such an attempt of combining these two methods 
has been already made in \cite{Zha}. 
With the help of some observations in \cite{H2016} and \cite{HY2017}, 
we adjust the machinery thereby assembled in \cite{Zha}, 
in order to reduce the amount of regularity of initial data, 
and also to discuss the system (\ref{1}) violating 
the standard null condition but satisfying the weak null condition. 
We hope that in the future, this machinery will be useful 
in discussing the Cauchy problem for a nonrelativistic system 
satisfying the weak null condition 
or the initial-boundary value problems 
in a domain exterior to an obstacle. 

This paper is organized as follows. 
In the next section, we prove some basic inequalities. 
In Section 3, we consider the bound for 
the weighted $L^2$ norm of the second or higher-order 
derivatives of local solutions. 
Sections 4--5 and 6--7 are devoted to the energy estimate 
and the space-time $L^2$ estimate for local solutions, 
respectively. 
In Section 8, we complete the proof of Theorem \ref{theorem1.2} 
by the continuity argument. 
\section{Preliminaries}
As mentioned in Section 1, 
we use $\partial_1$, $\partial_2$, $\partial_3$, 
$\Omega_{12}$, $\Omega_{23}$, $\Omega_{13}$ and $S$, 
and we denote these by 
$Z_1$, $Z_2,\dots, Z_7$ in this order. 
The set $\{Z_1, Z_2,\dots, Z_7\}$ is denoted by $Z$. 
Note that $\partial_t\notin Z$. 
For a multi-index $a=(a_1,\dots,a_7)$, 
we set $Z^a:=Z_1^{a_1}\cdots Z_7^{a_7}$. 
We also set 
${\bar Z}:=\{Z_1, Z_2,\dots, Z_6\}
=Z\setminus\{S\}$, 
and 
${\bar Z}^a:=Z_1^{a_1}\cdots Z_6^{a_6}$ 
for $a=(a_1,\dots,a_6)$. 

We need the commutation relations. 
Let $[\cdot,\cdot]$ be the commutator: 
$[A,B]:=AB-BA$. 
It is easy to verify that 
\begin{eqnarray}
& &
[Z_i,\Box]=0\,\,\,\mbox{for $i=1,\dots,6$},\,\,\,
[S,\Box]=-2\Box,
\label{eqn:comm1}\\
& &
[Z_j,Z_k]=\sum_{i=1}^\mu C^{j,k}_i Z_i,\,\,\,
j,\,k=1,\dots,7,
\label{eqn:comm2}\\
& &
[Z_j,\partial_k]
=
\sum_{i=1}^n C^{j,k}_i\partial_i,\,\,\,j=1,\dots,7,\,\,k=1,2,3,
\label{eqn:comm3}\\
& &
[Z_j,\partial_t]=0,\,j=1,\dots,6,\quad [S,\partial_t]=-\partial_t
\label{eqn:comm4}.
\end{eqnarray}
Here $C^{j,k}_i$ denotes a constant depending on 
$i$, $j$, and $k$.

The next lemma states that the null form is preserved 
under the differentiation. 
\begin{lemma}\label{lemma2.1}
Suppose that $\{G^{\alpha\beta\gamma}\}$ and 
$\{H^{\alpha\beta}\}$ satisfy the null condition 
$($see $(\ref{2})$, $(\ref{3})$, and $(\ref{12})$ above$)$. 
For any $Z_i$ $(i=1,\dots,7)$, 
the equality 
\begin{equation}\label{18}
\begin{split}
&
Z_i
G^{\alpha\beta\gamma}
(\partial_\gamma v)
(\partial_{\alpha\beta}^2 w)\\
=&
G^{\alpha\beta\gamma}
(\partial_\gamma Z_i v)
(\partial_{\alpha\beta}^2 w)
+
G^{\alpha\beta\gamma}
(\partial_\gamma v)
(\partial_{\alpha\beta}^2 Z_i w)
+
{\tilde G}_i^{\alpha\beta\gamma}
(\partial_\gamma v)
(\partial_{\alpha\beta}^2 w)
\end{split}
\end{equation}
holds 
with the new coefficients 
$\{{\tilde G}_i^{\alpha\beta\gamma}\}$ 
also satisfying the null condition. 
Also, the equality 
\begin{equation}\label{19}
\begin{split}
&
Z_i
H^{\alpha\beta}
(\partial_\alpha v)
(\partial_\beta w)\\
=&
H^{\alpha\beta}
(\partial_\alpha Z_i v)
(\partial_\beta w)
+
H^{\alpha\beta}
(\partial_\alpha v)
(\partial_\beta Z_i w)
+
{\tilde H}_i^{\alpha\beta}
(\partial_\alpha v)
(\partial_\beta w)
\end{split}
\end{equation}
holds 
with the new coefficients 
$\{{\tilde H}_i^{\alpha\beta}\}$ 
also satisfying the null condition. 
\end{lemma}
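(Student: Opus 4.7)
The plan is to apply Leibniz to $Z_i\bigl[G^{\alpha\beta\gamma}(\partial_\gamma v)(\partial^2_{\alpha\beta}w)\bigr]$ and then move each $Z_i$ past the derivatives on $v$ and $w$ using the commutation identities (\ref{eqn:comm3})--(\ref{eqn:comm4}). Writing
$$Z_i\partial_\gamma = \partial_\gamma Z_i + [Z_i,\partial_\gamma],\qquad Z_i\partial^2_{\alpha\beta} = \partial^2_{\alpha\beta}Z_i + [Z_i,\partial_\alpha]\partial_\beta + \partial_\alpha[Z_i,\partial_\beta],$$
the two ``main'' terms reproduce the first two pieces of the right-hand side of (\ref{18}), while the commutator contributions -- each a constant-coefficient linear combination of first-order partials on $v$ or of second-order partials on $w$ -- can be collected into a single expression $\tilde G_i^{\alpha\beta\gamma}(\partial_\gamma v)(\partial^2_{\alpha\beta}w)$, which defines $\tilde G_i$. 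The same procedure applied to $H^{\alpha\beta}(\partial_\alpha v)(\partial_\beta w)$ yields (\ref{19}) with $\tilde H_i$. The real content of the lemma is then that these extracted tensors still satisfy the null condition, and I would verify this case by case.

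The easy generators are handled directly. For $Z_i = \partial_k$ with $k\in\{1,2,3\}$, every commutator vanishes, so $\tilde G_i\equiv 0$ and $\tilde H_i\equiv 0$, and the null condition is trivial. For $Z_i = S$, the identities $[S,\partial_\mu]=-\partial_\mu$ and $[S,\partial^2_{\alpha\beta}]=-2\partial^2_{\alpha\beta}$ produce $\tilde G_i^{\alpha\beta\gamma} = -3\,G^{\alpha\beta\gamma}$ and $\tilde H_i^{\alpha\beta} = -2H^{\alpha\beta}$, which inherit the null condition from $G$ and $H$.

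The only substantive case, and the main obstacle, is $Z_i = \Omega_{jk}$. Here I would exploit the rotational invariance of the null cone $X_0^2 = X_1^2+X_2^2+X_3^2$. Let $A$ be the antisymmetric matrix on $\mathbb{R}^4$ that generates a rotation in the $(x_j,x_k)$-plane and fixes $X_0$, and set $R_\theta := e^{\theta A}$. Define $G_\theta^{\alpha\beta\gamma}$ by transporting each upper index of $G$ by $R_\theta$. Because $R_\theta$ preserves the null cone, the cubic form $G_\theta^{\alpha\beta\gamma}X_\alpha X_\beta X_\gamma$ vanishes for every null $X$ and every $\theta$; differentiating at $\theta=0$ yields a cubic form in $X$ whose coefficients are a specific tensor and which still vanishes on the null cone. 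The key matching step is to check that this Lie-type derivative coincides with the $\tilde G_i^{\alpha\beta\gamma}$ produced by the commutator bookkeeping above; this can be verified by direct computation using $[\Omega_{jk},\partial_\ell] = \delta_{\ell k}\partial_j - \delta_{\ell j}\partial_k$ for spatial $\ell$ and $[\Omega_{jk},\partial_0]=0$, which are precisely the infinitesimal action of $-A$ on the index $\mu$ of $\partial_\mu$. An identical argument, applied to the quadratic form $H^{\alpha\beta}X_\alpha X_\beta$, shows that $\tilde H_i$ satisfies the null condition. The only place where care is needed is the matching step in this last case; aside from that, everything is routine Leibniz and commutator bookkeeping.
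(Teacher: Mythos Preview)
Your proposal is correct and follows the standard argument that the paper defers to (it simply cites \cite[p.\,91]{Al2010} rather than writing out a proof). The Leibniz-plus-commutator bookkeeping, the trivial cases $Z_i=\partial_k$ and $Z_i=S$, and the rotational-invariance argument for $\Omega_{jk}$ are exactly the ingredients of the classical proof, and your ``matching step'' concern is unfounded: the commutator identities $[\Omega_{jk},\partial_\ell]=\delta_{\ell k}\partial_j-\delta_{\ell j}\partial_k$ are by definition the infinitesimal action, so the tensor you extract \emph{is} the Lie derivative of $G$ along the rotation generator.
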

For the proof, see, e.g., \cite[p.\,91]{Al2010}.
It is possible to show the following lemma 
essentially in the same way as in \cite[pp.\,90--91]{Al2010}. 
Together with it, we will later exploit the fact that 
for local solutions $u$, 
the special derivatives $T_i u$ have 
better space-time $L^2$ integrability 
and improved time decay property of their $L^\infty({\mathbb R}^3)$ norms. 
\begin{lemma}\label{lemma2.2}
Set $\omega_0=-1$, $\omega_k=x_k/|x|$, $k=1,2,3$. 
Suppose that 
$\{G^{\alpha\beta\gamma}\}$, $\{H^{\alpha\beta}\}$ satisfy 
the null condition. Then, we have 
for smooth functions $w_i(t,x)$ $(i=1,2,3)$
\begin{align}
&
|
G^{\alpha\beta\gamma}
(\partial_\gamma w_1)
(\partial_{\alpha\beta}^2 w_2)
|
\leq
C
\bigl(
|T w_1|
|\partial^2 w_2|
+
|\partial w_1|
|T\partial w_2|
\bigr),\label{eqn:null1}\\
&
|
G^{\alpha\beta\gamma}
(\partial_{\alpha\gamma}^2 w_1)
(\partial_\beta w_2)
|
\leq
C
\bigl(
|T\partial w_1|
|\partial w_2|
+
|\partial^2 w_1|
|T w_2|
\bigr),\label{eqn:null2}
\end{align}
\begin{equation}\label{eqn:null3}
\begin{split}
&|
G^{\alpha\beta\gamma}
(\partial_\gamma w_1)
(\partial_\beta w_2)
(\partial_\alpha w_3)
|,\,
|
G^{\alpha\beta\gamma}
(\partial_\gamma w_1)
(\partial_\beta w_2)
(-\omega_\alpha)
(\partial_t w_3)
|
\\
\leq&
C
\bigl(
|T w_1|
|\partial w_2|
|\partial w_3|
+
|\partial w_1|
|T w_2|
|\partial w_3|
+
|\partial w_1|
|\partial w_2|
|T w_3|
\bigr),
\end{split}
\end{equation}
\begin{equation}\label{eqn:null4}
|
H^{\alpha\beta}
(\partial_\alpha v)
(\partial_\beta w)
|
\leq
C
\bigl(
|T v|
|\partial w|
+
|\partial v|
|T w|
\bigr).
\end{equation}
\end{lemma}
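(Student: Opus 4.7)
The plan is to exploit the pointwise decomposition $\partial_\alpha = T_\alpha - \omega_\alpha \partial_t$ valid for $\alpha=0,1,2,3$, where we set $T_0:=0$ so that for $\alpha=0$ (with $\omega_0=-1$) the identity reduces to $\partial_t=\partial_t$, and for $\alpha=i\in\{1,2,3\}$ it reduces to the definition $T_i=\partial_i+\omega_i\partial_t$ given in Theorem~\ref{theorem1.1}. Since the vector $(\omega_0,\omega_1,\omega_2,\omega_3)=(-1,x_1/|x|,x_2/|x|,x_3/|x|)$ satisfies $\omega_0^2=\omega_1^2+\omega_2^2+\omega_3^2$, applying the null condition to this particular $X$ gives the algebraic identities $G^{\alpha\beta\gamma}\omega_\alpha\omega_\beta\omega_\gamma=0$ and $H^{\alpha\beta}\omega_\alpha\omega_\beta=0$. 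Moreover $|T_\alpha v|\le |Tv|:=\sum_{i=1}^3|T_iv|$ for every $\alpha$. These two ingredients will drive all four estimates.

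For (\ref{eqn:null4}) the proof is immediate: substituting both $\partial_\alpha v=T_\alpha v-\omega_\alpha\partial_t v$ and $\partial_\beta w=T_\beta w-\omega_\beta\partial_t w$ and expanding the product produces four terms. The single ``all-radial'' term has coefficient $H^{\alpha\beta}\omega_\alpha\omega_\beta$ and vanishes; each of the other three contains at least one $T$-factor and is therefore bounded by $C(|Tv||\partial w|+|\partial v||Tw|)$. For (\ref{eqn:null1}) I would iterate the same idea on the three derivatives carried by $G^{\alpha\beta\gamma}$: first write $\partial_\gamma w_1=T_\gamma w_1-\omega_\gamma\partial_t w_1$ and bound the first piece by $C|Tw_1||\partial^2w_2|$; in the remainder, decompose the $\partial_\alpha$ of $\partial^2_{\alpha\beta}w_2$ to peel off a piece bounded by $C|\partial w_1||T\partial w_2|$; in what remains, decomposing $\partial_\beta$ produces one more ``good'' term with coefficient carrying $T_\beta$ and a final term whose coefficient $G^{\alpha\beta\gamma}\omega_\alpha\omega_\beta\omega_\gamma$ vanishes. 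The proof of (\ref{eqn:null2}) is identical after peeling $\partial_\beta$, $\partial_\alpha$, $\partial_\gamma$ in that order, and the cubic estimate (\ref{eqn:null3}) follows by the same three-step peeling applied to $\partial_\gamma w_1$, $\partial_\beta w_2$, $\partial_\alpha w_3$. The second inequality in (\ref{eqn:null3}) is even easier, because the factor $-\omega_\alpha\partial_t w_3$ is already in ``radial'' form, so only two derivatives need to be decomposed.

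The main (and essentially only) technical point is the bookkeeping of the $2^k$ terms produced when $k$ derivatives are each decomposed: after complete expansion exactly one term is all-radial and is annihilated by the null condition, while every other term contains a $T$-factor on at least one argument and hence fits into the stated bound. No sharp analysis is required beyond the elementary inequality $|T_\alpha v|\le|Tv|$ and the algebraic identities above; this is exactly the computation carried out in \cite[pp.\,90--91]{Al2010}, and I would simply adapt it in the present mixed-derivative setting.
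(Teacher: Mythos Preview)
Your proposal is correct and is precisely the standard argument the paper has in mind: the paper does not give a detailed proof but simply remarks that the lemma ``is possible to show \dots\ essentially in the same way as in \cite[pp.\,90--91]{Al2010},'' which is exactly the $\partial_\alpha = T_\alpha - \omega_\alpha\partial_t$ peeling you describe. One cosmetic point: the paper defines $|Tv|=\bigl(\sum_{k=1}^3|T_kv|^2\bigr)^{1/2}$ (see (\ref{24})) rather than the $\ell^1$-sum you wrote, but the two are equivalent up to a constant and your inequality $|T_\alpha v|\le C|Tv|$ holds either way.
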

Here, and in the following, we use the notation 
$\partial v:=(\partial_0v,\dots,\partial_3v)$, 
\begin{equation}
|Tv|
:=
\biggl(
\sum_{k=1}^3
|T_k v|^2
\biggr)^{1/2},\quad
|T\partial v|
:=
\biggl(
\sum_{k=1}^3
\sum_{\gamma=0}^3
|T_k\partial_\gamma v|^2
\biggr)^{1/2}\label{24}
\end{equation}
\begin{lemma}[Lemma 2.2 of \cite{Zha}]\label{lemma2.3}
The inequality
\begin{equation}\label{eqn:Zha}
\begin{split}
|Tv(t,x)|
\leq
C
\langle t\rangle^{-1}
\bigl(&
|\partial_x v(t,x)|
+
|\partial_t v(t,x)|
+
\sum_{|b|=1}
|\Omega^b v(t,x)|\\
&+
|S v(t,x)|
+
\langle t-r\rangle|\partial_x v(t,x)|
\bigr)
\end{split}
\end{equation}
holds for smooth functions $v(t,x)$. 
\end{lemma}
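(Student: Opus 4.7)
The plan is to derive a pointwise algebraic identity expressing $T_k$ as a combination of $S$, the rotations, and $\partial_t$, all weighted by $1/r^2$, and then to dichotomize over whether we are near the light cone or well inside it.

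First I would prove the identity
\begin{equation*}
T_k = \frac{1}{r^2}\sum_{j=1}^3 x_j\Omega_{jk} + \frac{x_k}{r^2}S + \frac{x_k(r-t)}{r^2}\partial_t, \qquad k=1,2,3.
\end{equation*}
This follows from two elementary relations. Contracting $\Omega_{jk}=x_j\partial_k-x_k\partial_j$ with $x_j$ and using $r\partial_r=\sum_j x_j\partial_j$ gives $r^2\partial_k=\sum_j x_j\Omega_{jk}+rx_k\partial_r$. The definition of $S$ gives $r\partial_r=S-t\partial_t$. Substituting and then forming $T_k=\partial_k+(x_k/r)\partial_t$, the two $\partial_t$ contributions collect into the stated $(r-t)/r^2$ coefficient.

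In the exterior region $\{r\geq t/2,\ t\geq 1\}$, we have $1/r\leq 2/t\leq C\langle t\rangle^{-1}$ and $|x_j|\leq r$, so taking absolute values in the identity yields
\begin{equation*}
|T_k v|\leq C\langle t\rangle^{-1}\Bigl(\sum_{|b|=1}|\Omega^b v|+|Sv|\Bigr)+\frac{|r-t|}{r}|\partial_t v|.
\end{equation*}
To dispose of the last term I would again invoke $S=t\partial_t+r\partial_r$, which gives $t|\partial_t v|\leq|Sv|+r|\partial_x v|$. Splitting accordingly, the $|Sv|$ contribution is absorbed into $C\langle t\rangle^{-1}|Sv|$ via the harmless bound $|r-t|/(rt)\leq C/t$ valid throughout this region, while the $r|\partial_x v|$ contribution produces $|r-t|t^{-1}|\partial_x v|\leq C\langle t-r\rangle\langle t\rangle^{-1}|\partial_x v|$, since $\langle t\rangle/t$ is bounded for $t\geq 1$. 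For the complementary interior region $\{r<t/2,\ t\geq 1\}$, where $\langle t-r\rangle\sim\langle t\rangle$, I would discard the identity and use the trivial bound $|T_k v|\leq|\partial_x v|+|\partial_t v|$, apply the same $S$-trick to $\partial_t v$, and observe that $|\partial_x v|\leq C\langle t\rangle^{-1}\langle t-r\rangle|\partial_x v|$ here. Finally, for $t\leq 1$ the factor $\langle t\rangle^{-1}$ is bounded below, so $|Tv|\leq|\partial_x v|+|\partial_t v|\leq C\langle t\rangle^{-1}(|\partial_x v|+|\partial_t v|)$ is immediate.

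The main obstacle is a bookkeeping one: the coefficient $|r-t|/r$ that appears from the identity changes character across $r=t$ and is not uniformly bounded, so care is needed in trading it either against $\langle t-r\rangle/\langle t\rangle$ (to feed the $\langle t-r\rangle|\partial_x v|$ slot on the right) or against $1/\langle t\rangle$ (to feed the $|Sv|$ slot) via the elementary inequality $|r-t|/(rt)\leq C/t$. Once this is handled uniformly over the two regions, the inequality drops out and no use of $L_k$ is needed.
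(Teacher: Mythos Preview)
Your argument is correct. The identity
\[
T_k = \frac{1}{r^2}\sum_{j=1}^3 x_j\Omega_{jk} + \frac{x_k}{r^2}S + \frac{x_k(r-t)}{r^2}\partial_t
\]
is verified exactly as you say, and the region-by-region bookkeeping goes through: in $\{r\geq t/2,\ t\geq 1\}$ one has $|r-t|/r\leq 1$ (in both cases $t/2\leq r\leq t$ and $r>t$), which gives the claimed control $|r-t|/(rt)\leq 1/t$; in $\{r<t/2,\ t\geq 1\}$ the equivalence $\langle t-r\rangle\sim\langle t\rangle$ absorbs the undifferentiated $|\partial_x v|$ and $|\partial_t v|$ into the weighted term; and $t\leq 1$ is trivial.

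The paper itself does not prove this lemma: it simply attributes the statement to \cite{Zha} (Lemma~2.2 there) and moves on. Your argument is the standard one underlying that reference, namely the algebraic decomposition of $T_k$ in terms of $\Omega_{jk}$, $S$, and $\partial_t$ together with a cone/interior splitting, so there is nothing to compare beyond noting that you have supplied what the paper outsources.
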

The following lemma is concerned with Sobolev-type 
or trace-type inequalities. 
We use these inequalities in combination with 
the Klainerman-Sideris inequality (see (\ref{eqn:KSineq}) below). 
The auxiliary norms 
\begin{align}
M_2(v(t))
&=
\sum_{{0\leq\delta\leq 3}\atop{1\leq j\leq 3}}
\|\langle t-|x|\rangle
\partial_{\delta j}^2 v(t)\|_{L^2({\Bbb R}^3)},\label{26}\\
M_4(v(t))
&=\sum_{|a|\leq 2}M_2({\bar Z}^a v(t)),\label{27}
\end{align}
which appear in the following discussion, play an intermediate role. 
We remark that $S$ and $\partial_t^2$ are absent in the 
right-hand side above. 
We also use the notation $\partial_r:=(x/|x|)\cdot\nabla$, 
\begin{align}
&
\|w\|_{L_r^\infty L_\omega^p({\mathbb R}^3)}
:=
\sup_{r>0}
\|w(r\cdot)\|_{L^p(S^2)},\label{28}\\
&
\|w\|_{L_r^2 L_\omega^p({\mathbb R}^3)}
:=
\biggl(
\int_0^\infty \|w(r\cdot)\|_{L^p(S^2)}^2 r^2dr
\biggr)^{1/2}.\label{29}
\end{align}

\begin{lemma}\label{lemma2.4}
Suppose that $v$ decays sufficiently fast as $|x|\to\infty$. 
The following inequalities hold for $\alpha=0,1,2,3$
\begin{align}
&
\|
\langle t-r\rangle\partial_\alpha v(t)
\|_{L^6({\mathbb R}^3)}
\leq
C
\bigl(
N_1(v(t))
+
M_2(v(t))
\bigr),\label{eqn:ell6}\\
&
\langle t-r\rangle
|\partial_\alpha v(t,x)|
\leq
C
\biggl(
\sum_{|a|\leq 1}N_1(\partial_x^a v(t))
+
\sum_{|a|\leq 1}M_2(\partial_x^a v(t))
\biggr).\label{eqn:ellinfty}
\end{align}
Moreover, we have 
\begin{align}
&
\|r\partial_\alpha v(t)\|_{L_r^\infty L_\omega^4({\mathbb R}^3)}
\leq
C
\sum_{|a|\leq 1}
N_1({\bar Z}^a v(t)),\label{eqn:j2}
\\
&
\langle r\rangle
|\partial_\alpha v(t,x)|
\leq
C\sum_{|a|\leq 2}
N_1({\bar Z}^a v(t)).
\label{eqn:j3}
\end{align}
\end{lemma}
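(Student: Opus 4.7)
The plan is to establish the four inequalities in the order stated: (\ref{eqn:ell6}) and (\ref{eqn:ellinfty}) are standard Sobolev/Morrey embeddings on $\mathbb{R}^3$ applied to the weighted product $\langle t-r\rangle\partial_\alpha v$, while (\ref{eqn:j2}) and (\ref{eqn:j3}) are trace/Sobolev estimates on spheres that exploit the fact that the rotation fields $\Omega_{ij}$ provide the angular derivatives on $S^2_r$ up to a factor of $r$.

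For (\ref{eqn:ell6}), I apply the Sobolev embedding $\|\phi\|_{L^6(\mathbb{R}^3)}\leq C\|\nabla\phi\|_{L^2}$ to $\phi=\langle t-r\rangle\partial_\alpha v$. The product rule and $|\nabla_x\langle t-r\rangle|\leq 1$ yield $|\nabla_x\phi|\leq|\partial_\alpha v|+\langle t-r\rangle|\nabla_x\partial_\alpha v|$; the first summand has $L^2$ norm at most $CN_1(v)$, while the second consists exactly of the mixed derivatives $\langle t-r\rangle\partial_{\alpha j}^2 v$ with $1\leq j\leq 3$ defining $M_2(v)$ in (\ref{26}). Inequality (\ref{eqn:ellinfty}) follows from the Morrey embedding $W^{1,6}(\mathbb{R}^3)\hookrightarrow L^\infty$ (valid since $6>3$), applied once more to $\langle t-r\rangle\partial_\alpha v$: the $L^6$ part is (\ref{eqn:ell6}), while the $L^6$ norm of its spatial gradient splits into $\|\partial_\alpha v\|_{L^6}\leq C\|\nabla\partial_\alpha v\|_{L^2}$ (Sobolev) plus (\ref{eqn:ell6}) applied with $v$ replaced by $\partial_x^a v$, $|a|=1$.

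For (\ref{eqn:j2}), I apply the Ladyzhenskaya--Gagliardo--Nirenberg inequality on the $2$-sphere, $\|g\|_{L^4(S^2)}^2\leq C\|g\|_{L^2(S^2)}\|g\|_{H^1(S^2)}$, to the dilated function $g(\omega)=rf(r\omega)$ with $f=\partial_\alpha v$. A direct calculation of the tangential gradient gives $\|\nabla_{S^2}[rf(r\cdot)]\|_{L^2(S^2)}=r\|\Omega f(r\cdot)\|_{L^2(S^2)}$, so matters reduce to the uniform-in-$r$ trace bound $\sup_r r\|h(r\cdot)\|_{L^2(S^2)}\leq C\|h\|_{L^2(\mathbb{R}^3)}^{1/2}\|\nabla h\|_{L^2(\mathbb{R}^3)}^{1/2}$ applied with $h=f$ and $h=\Omega f$. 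This trace bound is obtained by integrating $\partial_s(s^2\|h(s\cdot)\|_{L^2(S^2)}^2)$ from $0$ to $r$, applying Cauchy--Schwarz, and invoking the Hardy-type inequality $\int|h|^2/|x|\,dx\leq C\|h\|_{L^2}\|\nabla h\|_{L^2}$; commuting $\Omega$ past $\partial_\alpha$ produces only first-order $\partial$-terms, matching the $|a|\leq 1$ budget on the right-hand side. Inequality (\ref{eqn:j3}) is then obtained from (\ref{eqn:j2}) for $r\geq 1$ by the sphere-Morrey embedding $W^{1,4}(S^2)\hookrightarrow L^\infty(S^2)$, which costs one extra rotation field and raises the multi-index budget from $|a|\leq 1$ to $|a|\leq 2$; for $r\leq 1$ the pointwise bound is immediate from $H^2(\mathbb{R}^3)\hookrightarrow L^\infty$.

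The main obstacle is the delicate derivative bookkeeping in (\ref{eqn:j2}): the trace argument must be arranged so that no derivative of $v$ beyond those already present in $\sum_{|a|\leq 1}N_1(\bar Z^a v)$ enters on the right. The key is to use Hardy's inequality to trade a power of $|x|$ for a spatial derivative in just the right way, combined with the commutator identities of $\Omega$ with $\partial_\alpha$ (which produce only $\partial_\beta$-type terms), so that no third-order differentiation of $v$ is triggered.
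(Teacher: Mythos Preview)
Your arguments for (\ref{eqn:ell6}), (\ref{eqn:ellinfty}) and (\ref{eqn:j3}) are correct and match what the paper does (the paper itself only cites the literature for these, and your Sobolev/Morrey route is exactly the standard one behind those citations).

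For (\ref{eqn:j2}), however, there is a genuine derivative-counting gap in the scheme you outline. After applying Ladyzhenskaya on $S^2$ you invoke the trace bound
\[
\sup_r r\|h(r\cdot)\|_{L^2(S^2)}\le C\|h\|_{L^2}^{1/2}\|\nabla h\|_{L^2}^{1/2}
\]
with $h=\Omega f=\Omega\partial_\alpha v$. This forces you to control $\|\nabla\Omega\partial_\alpha v\|_{L^2}$. Commuting as you suggest gives $\nabla\Omega\partial_\alpha v=\partial_j\partial_\alpha\Omega v+(\text{second order in }v)$, and the top-order piece $\|\partial_j\partial_\alpha\Omega v\|_{L^2}$ (for instance $\|\partial_j\partial_t\Omega v\|_{L^2}$ when $\alpha=0$) is a genuine third-order quantity. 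The right-hand side $\sum_{|a|\le 1}N_1(\bar Z^a v)$ contains only $\|\partial\bar Z^a v\|_{L^2}$ with $|a|\le 1$, i.e.\ at most two derivatives of $v$; it does \emph{not} control $\|\partial\,\partial\,\Omega v\|_{L^2}$. Replacing $\nabla$ by $\partial_r$ in the trace bound does not help either, since $\partial_r\Omega\partial_\alpha v$ still reduces, after commutation, to $\partial_r\partial_\alpha\Omega v$ plus lower order, and $\|\partial_r\partial_\alpha\Omega v\|_{L^2}\le\|\nabla\partial_\alpha\Omega v\|_{L^2}$ is again third order. So ``no third-order differentiation of $v$ is triggered'' is not correct as stated.

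The fix, which is precisely what Sideris's (3.19) (quoted in the paper as (\ref{TomKey})) provides, is a trace-type inequality of the form
\[
r\|w(r\cdot)\|_{L^4(S^2)}\le C\|\partial_r w\|_{L^2({\mathbb R}^3)}^{1/2}\Bigl(\sum_{|b|\le 1}\|\Omega^b w\|_{L^2({\mathbb R}^3)}\Bigr)^{1/2},
\]
in which the radial derivative lands \emph{only} on $w$ and never on $\Omega w$. One obtains it by differentiating $r^2\|w(r\cdot)\|_{L^4(S^2)}^4$ (rather than first passing to $L^2(S^2)$ via Ladyzhenskaya), so that the fundamental-theorem step produces $\partial_r w$ directly and the angular regularity is extracted afterwards by Sobolev on $S^2$ at the $L^2$ level. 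Applying this with $w=\partial_\alpha v$ and then commuting $\Omega$ past $\partial_\alpha$ (only first-order commutators) stays exactly within the $|a|\le 1$ budget.
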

These inequalities have been already employed in the literature. 
For the proof of (\ref{eqn:ell6}), see (2.10) of \cite{H2016}. 
For the proof of (\ref{eqn:ellinfty}), 
see (37) of \cite{Zha}, (2.13) of \cite{H2016}. 
See (3.19) of \cite{Sideris2000} for the proof of (\ref{eqn:j2}). 
Finally, combining (3.14b) of \cite{Sideris2000} 
with the Sobolev embedding $H^2({\mathbb R}^3)\hookrightarrow 
L^\infty({\mathbb R}^3)$, 
we obtain (\ref{eqn:j3}).

We also need the following inequality. 
\begin{lemma}\label{lemma2.5}
Suppose that $v$ decays sufficiently fast as $|x|\to\infty$. 
For any $\theta$ with $0\leq \theta\leq 1/2$, 
there exists a constant $C>0$ such that the inequality 
\begin{equation}
r^{(1/2)+\theta}
\langle t-r\rangle^{1-\theta}
\|\partial_\alpha v(t,r\cdot)\|_{L^4(S^2)}
\leq
C
\biggl(
\sum_{|a|\leq 1}N_1(\Omega^a v(t))
+
M_2(v(t))
\biggr)
\label{eqn:interp}
\end{equation}
holds.
\end{lemma}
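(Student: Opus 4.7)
The plan is to reduce to the two endpoint cases $\theta = 1/2$ and $\theta = 0$ and then recover intermediate $\theta \in (0, 1/2)$ by geometric interpolation. The pointwise identity
\[
r^{(1/2)+\theta}\langle t-r\rangle^{1-\theta}
= \bigl(r\langle t-r\rangle^{1/2}\bigr)^{2\theta}\bigl(r^{1/2}\langle t-r\rangle\bigr)^{1-2\theta},
\]
together with the elementary inequality $a^\lambda b^{1-\lambda} \leq a+b$, shows that a common bound $M$ on the two endpoints propagates automatically to every $\theta \in [0,1/2]$.

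For each endpoint, the first step is the Gagliardo--Nirenberg inequality on $S^2$,
\[
\|g\|_{L^4(S^2)}^2 \leq C\bigl(\|g\|_{L^2(S^2)}\|\Omega g\|_{L^2(S^2)} + \|g\|_{L^2(S^2)}^2\bigr),
\]
applied to $g = \partial_\alpha v(t, r\cdot)$; here I use that the intrinsic gradient of $g$ on $S^2$ coincides with the restriction of $\Omega \partial_\alpha v$ to the sphere of radius $r$ (the $r$-factor being absorbed into $\Omega$). This reduces matters to weighted $L^2(S^2)$ bounds for $\partial_\alpha v$ and $\Omega\partial_\alpha v$ on spheres.

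The second step is a weighted trace inequality obtained from the fundamental theorem of calculus,
\[
r^a\langle t-r\rangle^{2b}\|F(t,r\cdot)\|_{L^2(S^2)}^2
= -\int_r^\infty \partial_s\bigl(s^a\langle t-s\rangle^{2b}\|F(t,s\cdot)\|_{L^2(S^2)}^2\bigr)\,ds,
\]
specialised to $(a,b)=(2,1/2)$ for $\theta=1/2$ and $(a,b)=(1,1)$ for $\theta=0$. After expanding the derivative and using Cauchy--Schwarz together with the one-dimensional Hardy inequality, each endpoint yields a bound of the form $\|F\|_{L^2(\mathbb{R}^3)}^2 + \|F\|_{L^2(\mathbb{R}^3)}\|\langle t-r\rangle \partial_r F\|_{L^2(\mathbb{R}^3)}$. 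Substituting $F = \partial_\alpha v$ and $F = \Omega \partial_\alpha v$ and invoking the commutation relations (\ref{eqn:comm3}) to reshuffle derivatives past $\Omega$ then produces control in terms of $\sum_{|a|\leq 1} N_1(\Omega^a v)$ and $\langle t-r\rangle$-weighted $L^2$ norms of second spatial derivatives of $\Omega^{\leq 1}v$, which are absorbed into $M_2(v)$ plus an angular analogue of it already contained in the right-hand side.

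The main obstacle will be the exponent bookkeeping in the trace step: expanding $\partial_s \langle t-s\rangle^{2b}$ produces a factor $(s-t)\langle t-s\rangle^{2b-2}$, and combined with $s^a$ one must verify, via Hölder and Hardy, that the resulting integrals do not exceed the weight $\langle t-r\rangle$ on second spatial derivatives built into $M_2$. The specific choices $b=1/2$ and $b=1$ at the two endpoints are dictated precisely by this matching; Hardy's inequality in the form $\||x|^{-1}H\|_{L^2(\mathbb{R}^3)} \leq C\|\nabla H\|_{L^2(\mathbb{R}^3)}$, applied to $H = \langle t-r\rangle \partial_\alpha v$, is what converts the borderline contributions on the sphere into gradient-type norms of $v$ and closes the estimate.
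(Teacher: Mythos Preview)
Your interpolation step is correct and is exactly what the paper does: the identity
\[
r^{(1/2)+\theta}\langle t-r\rangle^{1-\theta}
=\bigl(r\langle t-r\rangle^{1/2}\bigr)^{2\theta}\bigl(r^{1/2}\langle t-r\rangle\bigr)^{1-2\theta}
\]
reduces the general case to $\theta=0$ and $\theta=1/2$, in agreement with the paper's reference to the interpolation idea of Metcalfe--Nakamura--Sogge.

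The gap is in your endpoint argument. When you apply the weighted trace inequality to $F=\Omega\partial_\alpha v$, the output contains $\|\langle t-r\rangle\partial_r\Omega\partial_\alpha v\|_{L^2}$. Since $[\partial_r,\Omega]=0$ and $[\Omega,\partial_\alpha]=\sum c_\beta\partial_\beta$, this equals
\[
\|\langle t-r\rangle\partial_r\partial_\alpha(\Omega v)\|_{L^2}+O(M_2(v)),
\]
and the first term is exactly of the type $M_2(\Omega v)$. The right-hand side of the lemma is only $\sum_{|a|\le 1}N_1(\Omega^a v)+M_2(v)$: it contains no $\langle t-r\rangle$-weighted second derivatives of $\Omega v$. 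Your sentence ``an angular analogue of it already contained in the right-hand side'' is therefore not correct, and the argument does not close as written.

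The paper sidesteps this at $\theta=0$ by applying Lemma~\ref{lemma2.6} (the trace inequality $r^{1/2}\|w\|_{L^4(S^2)}\le C\|\nabla w\|_{L^2}$, whose proof via $\|w\|_{L^6}\le C\|\nabla w\|_{L^2}$ needs no separate angular bookkeeping) to the \emph{pre-weighted} function $w=\langle t-r\rangle\partial_\alpha v$. Because $\langle t-r\rangle$ is radial, it passes through the $L^4(S^2)$ norm, and $\|\nabla w\|_{L^2}\le \|\partial_\alpha v\|_{L^2}+\|\langle t-r\rangle\nabla\partial_\alpha v\|_{L^2}\le C(N_1(v)+M_2(v))$ directly. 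For $\theta=1/2$ the paper follows the proof of (3.19) in Sideris, whose structure is likewise asymmetric: the radial derivative --- and hence, in the weighted version, the factor $\langle t-r\rangle$ --- falls only on $\partial_\alpha v$, never on $\Omega\partial_\alpha v$. If you want to salvage your approach, you should mimic this: put the entire weight on the $\|\partial_\alpha v\|_{L^2(S^2)}$ factor after Gagliardo--Nirenberg and use an \emph{unweighted} estimate for the $\Omega$-factor, or, more simply, pre-multiply by the weight before invoking the trace inequality.
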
 
Following the proof of (3.19) in \cite{Sideris2000}, 
we are able to obtain this inequality for $\theta=1/2$. 
The next lemma with 
$v=\langle t-r\rangle\partial_\alpha w$ immediately 
yields (\ref{eqn:interp}) for $\theta=0$. 
We follow the idea in Section 2 of \cite{MNS-JJM2005} and 
obtain (\ref{eqn:interp}) for $\theta\in (0,1/2)$ by interpolation. 

In our proof, the trace-type inequality also plays an important role. 
(For the proof, see, e.g., (3.16) of \cite{Sideris2000}.)
\begin{lemma}\label{lemma2.6}
There exists a positive constant $C$ such that 
if $v=v(x)$ decays sufficiently fast as $|x|\to\infty$, 
then the inequality
\begin{equation}
r^{1/2}
\|v(r\cdot)\|_{L^4(S^2)}
\leq
C\|\nabla v\|_{L^2({\mathbb R}^3)}
\label{eqn:hoshiro}
\end{equation}
holds.
\end{lemma}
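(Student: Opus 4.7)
The plan is to reduce the estimate, via a dilation argument, to the standard Sobolev--trace embedding on a fixed region containing the unit sphere. By approximation one may assume $v\in C_c^\infty(\mathbb{R}^3)$. Setting $\tilde v(y):=v(ry)$ for $y\in\mathbb{R}^3$, one has $\|\tilde v\|_{L^4(S^2)}=\|v(r\cdot)\|_{L^4(S^2)}$, so the claim is equivalent to
\[
\|\tilde v\|_{L^4(S^2)}\le Cr^{-1/2}\|\nabla v\|_{L^2(\mathbb{R}^3)}.
\]

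Next, I would invoke the composition of the restriction map $H^1(A)\to H^{1/2}(S^2)$ with the two-dimensional Sobolev embedding $H^{1/2}(S^2)\hookrightarrow L^4(S^2)$, where $A:=\{y\in\mathbb{R}^3:1/2<|y|<2\}$ is a fixed annular neighbourhood of $S^2$. This yields an absolute constant $C>0$ with
\[
\|\tilde v\|_{L^4(S^2)}\le C\bigl(\|\tilde v\|_{L^2(A)}+\|\nabla\tilde v\|_{L^2(A)}\bigr).
\]
Changing variables $x=ry$, I would rescale each of these back. A short computation gives $\|\nabla\tilde v\|_{L^2(A)}=r^{-1/2}\|\nabla v\|_{L^2(A_r)}\le r^{-1/2}\|\nabla v\|_{L^2(\mathbb{R}^3)}$ with $A_r:=\{x\in\mathbb{R}^3:r/2<|x|<2r\}$, and
\[
\|\tilde v\|_{L^2(A)}^2=r^{-3}\int_{A_r}|v(x)|^2\,dx\le 4r^{-1}\int_{\mathbb{R}^3}\frac{|v(x)|^2}{|x|^2}\,dx,
\]
since $|x|\le 2r$ on $A_r$.

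Finally, the classical three-dimensional Hardy inequality $\||x|^{-1}v\|_{L^2(\mathbb{R}^3)}\le 2\|\nabla v\|_{L^2(\mathbb{R}^3)}$ bounds the $L^2(A)$ piece by $Cr^{-1/2}\|\nabla v\|_{L^2(\mathbb{R}^3)}$, and combining with the $\nabla\tilde v$ estimate produces the desired conclusion. No step here should be truly delicate: the argument is assembled entirely from standard ingredients (rescaling, the trace and Sobolev embeddings on a compact domain, and Hardy). The only bookkeeping is tracking scaling exponents, and the fact that both right-hand terms scale as $r^{-1/2}$ is exactly the scale-invariance that the estimate demands.
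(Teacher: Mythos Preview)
Your proof is correct. The rescaling computation, the trace/Sobolev chain $H^1(A)\to H^{1/2}(S^2)\hookrightarrow L^4(S^2)$, and the use of the Hardy inequality to absorb the $L^2$ piece all check out, and the powers of $r$ come out exactly as required by the scale-invariance of the inequality.

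The paper itself does not give a proof of this lemma; it simply refers the reader to (3.16) of Sideris (2000). Sideris' argument there is more hands-on: it obtains the trace bound directly from a one-dimensional fundamental-theorem-of-calculus identity in the radial variable combined with Sobolev embedding on $S^2$, rather than invoking the abstract $H^1\to H^{1/2}$ trace map on a fixed annulus. Your route is a clean and perfectly legitimate alternative---it makes the scale-invariance transparent by reducing everything to a fixed domain, at the cost of importing the trace theorem as a black box. Sideris' computation is more self-contained but slightly more bookkeeping-heavy. Either way, the result is standard, and your write-up is fine.
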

We also need the space-time $L^2$ estimates for 
the variable-coefficient operator $P$ defined as 
\begin{equation}
P
:=
\partial_t^2-\Delta
+h^{\alpha\beta}(t,x)\partial_{\alpha\beta}^2. 
\end{equation}
Let $h^{\alpha\beta}\in C^\infty((0,T)\times{\mathbb R}^3)$  
$(\alpha, \beta = 0,1,2,3)$, 
and suppose the symmetry condition 
$h^{\alpha\beta}=h^{\beta\alpha}$ 
and 
the size condition $\sum |h^{\alpha\beta}(t,x)|\leq 1/2$. 
We have the following:
\begin{lemma}[Theorem 2.1 of \cite{HWY2012Adv}]\label{lemma2.7}
For $0<\mu<1/2$, there exists a positive constant $C$ such that 
the inequality 
\begin{equation}\label{37}
\begin{split}
(1&+T)^{-2\mu}
\left(
\|
r^{-(3/2)+\mu}u
\|^2_{L^2((0,T)\times{\mathbb R}^3)}
+
\|
r^{-(1/2)+\mu}
\partial u
\|^2_{L^2((0,T)\times{\mathbb R}^3)}
\right)\\
&
\leq
C\|\partial u(0,\cdot)\|^2_{L^2({\mathbb R}^3)}\\
&
\hspace{0.4cm}
+C
\int_0^T\!\!\!\int_{{\mathbb R}^3}
\left(
|\partial u||Pu|
+
\frac{|u||Pu|}{r^{1-2\mu}\langle r\rangle^{2\mu}}
+
|\partial h||\partial u|^2 \right.\\
&
\hspace{2.4cm}
\left.
+ 
\frac{|\partial h||u\partial u|}{r^{1-2\mu}\langle r\rangle^{2\mu}}
+ 
\frac{|h||\partial u|^2}{r^{1-2\mu}\langle r\rangle^{2\mu}}
+ 
\frac{|h||u \partial u|}{r^{2 - 2\mu}\langle r\rangle^{2\mu}} 
\right)dxdt
\end{split}
\end{equation}
holds for smooth and compactly supported 
$($for any fixed time$)$ functions $u(t,x)$.
\end{lemma}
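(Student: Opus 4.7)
The plan is to prove Lemma \ref{lemma2.7} by a weighted positive multiplier (Morawetz-type) identity. I would multiply $Pu$ by the radial operator $Xu := r^{2\mu}(\partial_r u + u/r)$ and integrate over $(0,T)\times{\mathbb R}^3$. Splitting $P=\Box+h^{\alpha\beta}\partial_{\alpha\beta}^2$ produces three contributions to analyze: the flat wave identity, the variable-coefficient error, and boundary integrals at $t=0,T$.

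For the flat part $\int_0^T\!\!\int\Box u\cdot Xu\,dxdt$, integration by parts in both space and time yields the standard Morawetz-type identity. The interior positive quadratic form, exploiting $\mu>0$, contains a contribution of order $\mu\,r^{-1+2\mu}(\partial_r u)^2$ plus a positive angular-derivative term of order $r^{-3+2\mu}$; the remaining $r^{-1+2\mu}(\partial_t u)^2$ component is recovered by also integrating by parts in time and re-integrating by parts in $x$. A weighted Hardy inequality (valid since $\mu<1/2$) converts the radial bound into the $r^{-3+2\mu}|u|^2$ bound on the LHS of $(\ref{37})$. The boundary integral at $t=0$ reads $\int r^{2\mu}\partial_t u(0)(\partial_r u(0)+u(0)/r)\,dx$ and is bounded by $C\|\partial u(0)\|^2_{L^2}$ via Cauchy--Schwarz and Hardy.

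The variable-coefficient part $\int_0^T\!\!\int h^{\alpha\beta}\partial_{\alpha\beta}^2 u\cdot Xu\,dxdt$ is handled by integration by parts to transfer one derivative off of $u$. The four error integrands on the RHS of $(\ref{37})$ correspond exactly to the four possibilities for where that derivative lands: landing on $h$ produces $|\partial h||\partial u|^2$ and, in the cross-term from the $u/r$ factor, $|\partial h||u\partial u|/(r^{1-2\mu}\langle r\rangle^{2\mu})$; landing on the radial weight $r^{2\mu}$ itself produces $|h||\partial u|^2/(r^{1-2\mu}\langle r\rangle^{2\mu})$ and $|h||u\partial u|/(r^{2-2\mu}\langle r\rangle^{2\mu})$. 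The $\langle r\rangle^{2\mu}$ in the denominators is obtained by splitting $r^{2\mu}=(r/\langle r\rangle)^{2\mu}\langle r\rangle^{2\mu}$, so that the bounded factor $(r/\langle r\rangle)^{2\mu}\leq 1$ absorbs the large-$r$ growth of the multiplier; the size assumption $\sum|h^{\alpha\beta}|\leq 1/2$ prevents any unbounded absorption back into the LHS.

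The main obstacle is the time-boundary integral at $t=T$, which reads $\int r^{2\mu}\partial_t u(T)(\partial_r u(T)+u(T)/r)\,dx$ and cannot be absorbed directly into the positive LHS. One bounds it by $C\|r^\mu\partial u(T)\|^2_{L^2}\leq C(1+T)^{2\mu}\|\partial u(T)\|^2_{L^2}$, and then invokes the basic perturbed energy identity (pair $Pu$ with $\partial_t u$ in $L^2$, use the symmetry of $h$ and the bound $\sum|h^{\alpha\beta}|\leq 1/2$) to control $\|\partial u(T)\|^2_{L^2}$ by $C(\|\partial u(0)\|^2_{L^2}+\int_0^T\!\!\int(|\partial u||Pu|+|\partial h||\partial u|^2)\,dxdt)$. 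Combining the two estimates and dividing through by $(1+T)^{2\mu}$ absorbs the growth factor and yields precisely the inequality $(\ref{37})$.
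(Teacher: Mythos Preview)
The paper does not give its own proof of this lemma; it cites \cite{HWY2012Adv} and remarks that the estimate ``was proved by the geometric multiplier method of Rodnianski (see Appendix of \cite{Ster}).'' Your overall strategy---a Morawetz-type multiplier adapted to the weight $r^{2\mu}$, combined with the standard energy identity for $P$---is exactly the method alluded to.

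There is, however, a genuine gap in your handling of the time-boundary terms. You claim that the $t=0$ boundary term $\int r^{2\mu}\partial_t u(0)\bigl(\partial_r u(0)+u(0)/r\bigr)\,dx$ is bounded by $C\|\partial u(0)\|_{L^2}^2$, and that the $t=T$ term is bounded by $C\|r^{\mu}\partial u(T)\|_{L^2}^2\leq C(1+T)^{2\mu}\|\partial u(T)\|_{L^2}^2$. Neither step is justified by the hypotheses: the lemma only assumes $u(t,\cdot)$ is compactly supported for each fixed $t$, with no control on the size of the support, and the constant $C$ is claimed to depend only on $\mu$. Since your multiplier carries the unbounded weight $r^{2\mu}$, Cauchy--Schwarz and Hardy only give $\|r^{\mu}\partial u(0)\|_{L^2}^2$ at $t=0$, and the inequality $r^{2\mu}\leq C(1+T)^{2\mu}$ at $t=T$ simply fails unless you already know $\mathrm{supp}\,u(T,\cdot)\subset\{r\lesssim 1+T\}$.

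The standard remedy is to truncate the multiplier: replace $r^{2\mu}$ by, say, $f_T(r)=\min(r,T)^{2\mu}$ (or a smooth version thereof). Then $f_T$ is bounded by $T^{2\mu}$, so both boundary terms are controlled by $T^{2\mu}\|\partial u\|_{L^2}^2$ and hence, after division by $(1+T)^{2\mu}$ and the perturbed energy identity, by the right-hand side of (\ref{37}). For $r\leq T$ one has $f_T'(r)=2\mu r^{2\mu-1}$ and recovers the full interior positivity; the region $\{r>T,\ 0<t<T\}$ contributes nothing from $f_T'$ but is handled directly by the crude bound $r^{-1+2\mu}\leq T^{-1+2\mu}$ together with the energy estimate, yielding an additional $T^{2\mu}\sup_t\|\partial u(t)\|_{L^2}^2$ which is again acceptable. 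With this modification your argument goes through and matches the method the paper cites.
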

See also \cite{MS-SIAM} for an earlier and related estimate. 
The estimate (\ref{37}) was proved by the geometric multiplier method 
of Rodnianski (see Appendix of \cite{Ster}). 
At first sight, the above estimate may appear useless 
for the proof of global existence, 
because of the presence of the factor $(1+T)^{-2\mu}$. 
Combined with Lemma \ref{lemma2.5} 
and the useful idea of dyadic decomposition 
of the time interval (see (\ref{124}) below), 
the estimate (\ref{37}) actually works effectively for the proof of 
global existence with no use of $L_j$ and with limitation of 
the occurrence of $S$ to $1$ in the definition of $N_4(u(t))$. 

The following was proved by Klainerman and Sideris, 
and will be used in the proof of Proposition \ref{proposition1} below. 
By setting $t=0$ in (\ref{eqn:KSineq}), we get the simple inequality 
$M_2(v(0))\leq C_{KS}N_2(v(0))$ which, 
together with Proposition \ref{proposition1}, 
will be used in the proof of Proposition \ref{proposition6} below. 
\begin{lemma}[Klainerman-Sideris inequality \cite{KS}]\label{lemma2.8}
There exists a constant $C_{KS}>0$ such that 
the inequality
\begin{equation}
M_2(v(t))
\leq
C_{KS}
\bigl(
N_2(v(t))
+
t
\|\Box v(t)\|_{L^2({\mathbb R}^3)}
\bigr)\label{eqn:KSineq}
\end{equation}
holds for smooth functions $v=v(t,x)$ 
decaying sufficiently fast as $|x|\to\infty$. 
\end{lemma}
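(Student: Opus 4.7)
The plan is to derive a pointwise Klainerman--Sideris identity expressing $\langle t-r\rangle\,|\partial^2_{\delta j} v|$ (for $\delta \in \{0,1,2,3\}$ and $j \in \{1,2,3\}$) as a combination of $|\partial(\partial_x^a\Omega^b S^d v)|$ for $|a|+|b|+d\leq 1$ plus a correction of size $t\,|\Box v|$, and then to take $L^2$ norms.

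The starting point is the scaling relation $S = t\partial_t + x^k\partial_k$, which gives the ``solved'' form $t\partial_t v = Sv - x^k\partial_k v$. Differentiating in $\partial_j$ (for $j \in \{1,2,3\}$, exploiting that $\partial^2_{0j}$ rather than $\partial_t^2$ appears in $M_2$) and using $[\partial_j, S] = \partial_j$ yields
$$t\,\partial_t\partial_j v = \partial_j Sv - \partial_j v - x^k\,\partial_j\partial_k v.$$
The problematic remainder $x^k\partial_j\partial_k v$ is decomposed via the algebraic identity $x^k\partial_j = x_j\partial_k + \Omega_{kj}$, producing $x_j\Delta v + \sum_k \Omega_{kj}\partial_k v$ modulo first-order commutators. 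The Laplacian is then eliminated through the wave equation $\Delta v = \partial_t^2 v - \Box v$. An analogous treatment of the purely spatial derivatives $\partial_i\partial_j v$ (iterating $x^k\partial_i = x_i\partial_k + \Omega_{ki}$) furnishes a companion pointwise estimate of the form $r\,|\partial_i\partial_j v| \leq C(|\partial\Omega v| + |\partial v| + r\,|\Box v|)$, which in turn absorbs the residual term $r\,|\partial_t^2 v|$ appearing after the wave equation has eliminated $\Delta v$.

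Combining the two pointwise bounds and splitting into the regions $|x| \leq t/2$ (where $\langle t-r\rangle \leq 1+t$ since $r \geq 0$) and $|x| \geq t/2$ (where $\langle t-r\rangle \leq C(1+r)$ and where the coefficients $x_j/r$, $1/r$ are not singular), one arrives at a pointwise inequality
$$\langle t-r\rangle\,|\partial^2_{\delta j} v| \leq C\sum_{|a|+|b|+d \leq 1} |\partial\,\partial_x^a\Omega^b S^d v| + C\,\langle t\rangle\,|\Box v|.$$
Squaring, integrating over ${\mathbb R}^3$, and summing over $\delta, j$ bounds the first sum by $C\,N_2(v(t))$ (by definition of $N_2$) and the second by $C\,\langle t\rangle\,\|\Box v(t)\|_{L^2({\mathbb R}^3)}$, which yields (\ref{eqn:KSineq}) after a trivial adjustment of $C_{KS}$ to account for the discrepancy between $\langle t\rangle$ and $t$ at small $t$.

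The main obstacle is the careful algebraic book-keeping: one must check that no genuinely singular $1/r$ coefficients survive (those arising in the radial--angular decomposition are cancelled because $\Omega_{ij}$ vanishes at the origin in the appropriate sense) and, crucially, that the factor multiplying $|\Box v|$ in the final pointwise bound does not exceed $\langle t\rangle$. Both features rely on the fact that the definition of $M_2$ excludes $\partial_t^2$, so that the wave equation is invoked only where $\partial_t^2$ would otherwise appear, and that the scaling operator $S$ contributes only the single factor of $t$ through $t\partial_t = S - x^k\partial_k$. For the full details of this computation we refer to the original paper \cite{KS}.
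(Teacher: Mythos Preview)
The paper does not give its own proof of Lemma~\ref{lemma2.8}; it simply attributes the inequality to Klainerman and Sideris~\cite{KS} and uses it as a black box. Your proposal goes further than the paper by sketching the actual argument from~\cite{KS}: the scaling identity $t\partial_t=S-x^k\partial_k$, the rotation identity $x^k\partial_j=x_j\partial_k+\Omega_{kj}$ to trade $x^k\partial_j\partial_k v$ for $x_j\Delta v$ plus rotation-derivative terms, and the wave equation to eliminate $\Delta v$ in favor of $\partial_t^2 v-\Box v$. This is indeed the skeleton of the original Klainerman--Sideris computation, and your remark that the exclusion of $\partial_t^2$ from $M_2$ is what keeps the $\Box v$ coefficient at order~$t$ is the right structural observation. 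Since both you and the paper ultimately defer to~\cite{KS} for the complete verification, your write-up is consistent with---and more informative than---what the paper provides.
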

\section{Bound for $M_4(u(t))$}
Since the second order quasi-linear hyperbolic system (\ref{1}) 
can be written in the form of 
the first order quasi-linear symmetric hyperbolic system 
(see, e.g., (5.9) of Racke \cite{Racke}), 
the standard local existence theorem 
(see, e.g., Theorem 5.8 of \cite{Racke}) 
applies to the Cauchy problem for (\ref{1}). 
To begin with, we assume that 
the initial data are smooth, compactly supported, 
and small so that 
\begin{equation}\label{eqn:3a}
\begin{split}
&
\sum_{i=1}^2N_4(u_i(0),\partial_t u_i(0))
\leq
C_D\sum_{i=1}^2D(f_i,g_i)
\\
\leq&
\varepsilon_0
:=
\min
\biggl\{
\frac{\min\{\varepsilon_1^*,\varepsilon_2^*\}}{2AC_2(1+2A)},
\frac{\varepsilon_3^*}{2AC_2(1+2A)},\\
&
\hspace{1.8cm}
\frac{1}{2C_2C_3(2+3A)(1+2A)},
\frac{1-\frac{4C_0}{9A^2}}{2AC_1C_2(1+2A)+3AC_1}
\biggr\}
\end{split}
\end{equation}
may hold. 
See the inequality following (\ref{8}) for the constant $C_D$. 
See (\ref{eqn:3b}), (\ref{eqn:3f}), and (\ref{eqn:daiji1}) for the constants 
$\varepsilon_1^*$, $\varepsilon_2^*$, and $\varepsilon_3^*$, respectively. 
See (\ref{169}) for $A$, and see (\ref{165}) for $C_0$ and $C_1$. 
See (\ref{172}) and the inequality following it for $C_2$ and $C_3$. 
Note that $\varepsilon_0$ is independent of 
$R_*$ (see Remark \ref{remark1}). 

We know that 
a unique, smooth solution to (\ref{1}) exists 
at least for a short time interval, 
and it is compactly supported at any fixed time 
by the finite speed of propagation. 

Before entering into the energy estimate 
in the next section, 
we must refer to an elementary result concerning 
point-wise estimates for $u_1$ and $u_2$. 
It compensates for the absence of 
$\partial_t^i v(t,x)$ $(i=2,3,4)$ in the definition of 
the norms $N_4(v(t))$, $M_4(v(t))$, $G(v(t))$, and $L(v(t))$ 
(see (\ref{7}), (\ref{27}), (\ref{84})--(\ref{85})). 
\begin{lemma}\label{lemma3.1}
There exists a constant 
$\varepsilon^*_1>0$ depending on 
the coefficients of $(\ref{1})$ 
with the following property$:$ 
whenever smooth solutions $u=(u_1,u_2)$ to 
$(\ref{1})$ satisfy
\begin{equation}
\max\{\,|\partial_\alpha{\bar Z}^b u_k(t,x)|\,:\,
|b|\leq 1,\,0\leq\alpha\leq 3,\,k=1,2\,\}
\leq
\varepsilon^*_1,
\label{eqn:3b}
\end{equation}
the following point-wise inequalities 
$({\rm i})${\rm --}$({\rm iv})$ hold for $i=1,2$. 

\noindent $({\rm i})$ The inequalities
\begin{align}
&
|\partial_t^2 u_i(t,x)|
\leq
C
|\partial\partial_x u_i(t,x)|
+
C
\sum_{k=1}^2
|\partial u_k(t,x)|,
\label{eqn:3c}\\
&
|\partial_t^3 u_i(t,x)|
\leq
C\sum_{|a|=1}^2
|\partial\partial_x^a u_i(t,x)|
+
C\sum_{k=1}^2|\partial u_k(t,x)|
\label{eqn:ad1}
\end{align}
hold. 

\noindent $({\rm ii})$
There hold 
\begin{equation}\label{eqn:3yokutsukau}
|\partial_t^2{\bar Z}^a u_i(t,x)|
\leq
C
\sum_{|b|\leq |a|}
\biggl(
|\partial\partial_x{\bar Z}^b u_i(t,x)|
+
\sum_{k=1}^2
|\partial{\bar Z}^b u_k(t,x)|
\biggr), \,\,
|a|=1,2,
\end{equation}
\begin{equation}\label{eqn:ad5}
\begin{split}
&
|\partial_t^3{\bar Z}^a u_i(t,x)|\\
\leq&
C\sum_{{|b|=2}\atop{|c|\leq 1}}
|\partial\partial_x^b{\bar Z}^c u_i(t,x)|
+
C\sum_{|b|,|c|\leq 1}\sum_{k=1}^2
|\partial\partial_x^b{\bar Z}^c u_k(t,x)|,\,\,|a|=1.
\end{split}
\end{equation}
\noindent $({\rm iii})$ The inequality 
\begin{equation}
|\partial_t^2S u_i(t,x)|
\leq
C
\sum_{d\leq 1}
\biggl(
|\partial\partial_x S^du_i(t,x)|
+
\sum_{k=1}^2
|\partial S^d u_k(t,x)|
\biggr),\,\,i=1,2.
\label{eqn:3yokutsukau2}
\end{equation}
holds.

\noindent $({\rm iv})$ The inequality
\begin{equation}
|T_j\partial_t^2 u_i(t,x)|
\leq
C|T_j\partial\partial_x u_i(t,x)|
+
C\sum_{k=1}^2|T_j\partial u_k(t,x)|
\label{eqn:ad3}
\end{equation}
holds for $j=1,2,3$. 
Also, for $|a|=1$
\begin{equation}\label{eqn:adad1}
\begin{split}
|T_j\partial_t^2{\bar Z}^a u_i(t,x)|
\leq&
C|T_j\partial\partial_x{\bar Z}^a u_i(t,x)|
+
C\sum_{k=1}^2
\sum_{|b|\leq 1}
|T_j\partial{\bar Z}^b u_k(t,x)|\\
&
+
C\biggl(
\sum_{k=1}^2
T_j\partial u_k(t,x)
\biggr)
\biggl(
\sum_{k=1}^2
\sum_{|b|\leq 1}
|\partial\partial_x{\bar Z}^b u_k(t,x)|
\biggr).
\end{split}
\end{equation}
\end{lemma}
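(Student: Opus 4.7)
The plan is to exploit equation (\ref{1}) directly to solve algebraically for the time derivatives on the left-hand side, using the smallness assumption (\ref{eqn:3b}) to ensure the relevant coefficient of $\partial_t^2 u_i$ is invertible. First isolate the $\partial_t^2 u_i$ terms in (\ref{1}): writing the quasi-linear part as $G_i^{ji,00\gamma}(\partial_\gamma u_j)\partial_t^2 u_i + (\text{terms with at least one spatial index})$, one sees
\[
A_i \, \partial_t^2 u_i = \Delta u_i - \sum_{(\alpha,\beta)\neq(0,0)} G_i^{ji,\alpha\beta\gamma}(\partial_\gamma u_j)(\partial^2_{\alpha\beta}u_i) - H_i^{jk,\alpha\beta}(\partial_\alpha u_j)(\partial_\beta u_k),
\]
where $A_i := 1 + G_i^{ji,00\gamma}(\partial_\gamma u_j)$. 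Choose $\varepsilon_1^*$ small enough that (\ref{eqn:3b}) forces $|A_i|\geq 1/2$. Dividing through, the $\Delta u_i$ and the remaining $G$ terms contribute to $|\partial\partial_x u_i|$, while the $H$ terms obey $|H_i^{jk,\alpha\beta}(\partial_\alpha u_j)(\partial_\beta u_k)| \leq C\varepsilon_1^* \sum_k |\partial u_k|$ by the pointwise smallness hypothesis. This yields (\ref{eqn:3c}). For (\ref{eqn:ad1}), apply $\partial_t$ to this identity; the only new difficulty is that $\partial_t$ falling on $A_i^{-1}$ produces $A_i^{-2}\partial_t A_i$, whose size is again controlled by the right-hand side of (\ref{eqn:3b}), so the new terms are absorbed into $C\sum_{|a|\leq 2}|\partial\partial_x^a u_i|+C\sum_k|\partial u_k|$.

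For part (ii), apply $\bar{Z}^a$ with $|a|\leq 2$ to (\ref{1}). By Lemma \ref{lemma2.1}, differentiating the $G$ and $H$ null forms produces the same structural objects with new null coefficients $\tilde G, \tilde H$ plus terms where the vector field has fallen on only one factor. Commuting $\bar{Z}^a$ past derivatives via (\ref{eqn:comm2})--(\ref{eqn:comm3}) rewrites the leading term as $A_i\,\partial_t^2\bar{Z}^a u_i + \text{(lower order in the $\bar Z$-hierarchy)}$, and the same inversion argument gives (\ref{eqn:3yokutsukau}). The estimate (\ref{eqn:ad5}) follows by applying $\partial_t$ once more to the expression just derived for $\partial_t^2\bar Z^a u_i$ with $|a|=1$.

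For part (iii), apply $S$ to (\ref{1}) and use (\ref{eqn:comm1}): $\Box(Su_i) = S(\Box u_i) + 2\Box u_i$. Expanding $S$ on the nonlinear terms via Leibniz and Lemma \ref{lemma2.1} rewrites the equation for $Su_i$ in the same nonlinear form (up to harmless coefficient modifications), and after isolating $A_i\,\partial_t^2 Su_i$ exactly as before, (\ref{eqn:3yokutsukau2}) follows. For part (iv), apply $T_j$ to the identity $A_i\,\partial_t^2 u_i = (\ldots)$ already obtained in step one. Since $T_j$ is a first-order operator with bounded coefficients, and $|T_j A_i|\lesssim \varepsilon_1^*$, this immediately yields (\ref{eqn:ad3}). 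For the more delicate (\ref{eqn:adad1}), first apply $\bar Z^a$ ($|a|=1$) to recover (\ref{eqn:3yokutsukau}), then apply $T_j$: the new feature is that when $T_j$ differentiates $A_i^{-1}$ (or the analogous coefficient in the $\bar Z^a$-version), we pick up a factor $T_j\partial u_k$ multiplied by the remaining Hessian-type factor $\partial\partial_x\bar Z^b u_k$, which is exactly the cubic-looking term in the right-hand side of (\ref{eqn:adad1}).

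The main obstacle is bookkeeping: after applying $\bar Z^a$, $S$, or $T_j$, one must verify that the coefficient in front of the top-order time derivative is still close to $1$ and that every remainder term can be genuinely absorbed into one of the allowed norms on the right-hand side without reintroducing $\partial_t^2$ in an uncontrolled way. The cubic term in (\ref{eqn:adad1}) in particular forces one to keep the quasi-linear piece $A_i-1$ separated and to track exactly where $T_j$ falls on each factor, which is the only place where the estimates are not purely linear in the unknowns.
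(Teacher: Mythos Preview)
Your proposal is correct and follows exactly the standard approach the paper has in mind: the paper omits the proof entirely, stating only that one repeats the argument of Lemma 3.2 in \cite{H2016}, which is precisely the algebraic inversion you describe---isolate $A_i\,\partial_t^2 u_i$ from (\ref{1}), invert using the smallness hypothesis (\ref{eqn:3b}), and then successively apply $\partial_t$, ${\bar Z}^a$, $S$, or $T_j$ with the commutation relations and Lemma~\ref{lemma2.1} to propagate the structure. Your identification of the origin of the cubic term in (\ref{eqn:adad1}) (namely $T_j$ landing on the coefficient $A_i^{-1}$ and producing $T_j\partial u_k$ multiplied by the remaining second-order factor) is exactly right.
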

We must not assume smallness of 
$|\partial Su_2(t,x)|$ 
(see, e.g., (\ref{54})--(\ref{55}) below, where we allow 
$\|\partial Su_2(t)\|_{L^\infty({\mathbb R}^3)}$ to grow with $t$), 
and therefore we treat point-wise estimates for 
$\partial_t^2{\bar Z}^aS u_i(t,x)$ $(|a|=1)$, 
$\partial_t^3 S u_i(t,x)$, 
and $T_j\partial_t^2 S u_i(t,x)$, separately. 
\begin{lemma}\label{lemma3.2}
There exists a constant 
$\varepsilon^*_2>0$ depending on 
the coefficients of $(\ref{1})$ 
with the following property$:$ 
whenever smooth solutions $u=(u_1,u_2)$ to 
$(\ref{1})$ satisfy
\begin{equation}
\max\{\,|\partial_\alpha{\bar Z}^b u_k(t,x)|\,:\,
|b|\leq 1,\,0\leq\alpha\leq 3,\,k=1,2\,\}
\leq
\varepsilon^*_2,
\label{eqn:3f}
\end{equation}
then the inequality
\begin{equation}\label{eqn:3yokutsukau3}
\begin{split}
&
|\partial_t^2{\bar Z}^a S u_i(t,x)|\\
\leq&
C
\sum_{|b|,\,d\leq 1}
\biggl(
|\partial\partial_x{\bar Z}^b S^d u_i(t,x)|
+
\sum_{k=1}^2
|\partial{\bar Z}^b S^d u_k(t,x)|
\biggr)\\
&
\hspace{0.4cm}
+
C
\biggl(
\sum_{k=1}^2
|\partial S u_k(t,x)|
\biggr)
\biggl(
\sum_{|b|=1}
|\partial\partial_x{\bar Z}^b u_i(t,x)|
\biggr)
\end{split}
\end{equation}
holds for $|a|=1$ and $i=1,2$. 
Also, we have
\begin{equation}\label{eqn:ad6}
\begin{split}
|\partial_t^3Su_i(t,x)|
\leq&
C\sum_{|a|=2}|\partial\partial_x^a Su_i(t,x)|
+
C\sum_{k=1}^2\sum_{{|a|+d\leq 2}\atop{d\leq 1}}
|\partial\partial_x^a S^d u_k(t,x)|\\
&
+
C
\biggl(
\sum_{k=1}^2
|\partial Su_k(t,x)|
\biggr)
\biggl(
\sum_{|a|=1}^2
|\partial\partial_x^a u_i(t,x)|
+
\sum_{k=1}^2
|\partial u_k(t,x)|
\biggr
),
\end{split}
\end{equation}
\begin{equation}\label{eqn:adad2}
\begin{split}
&|T_j\partial_t^2 S u_i(t,x)|\\
\leq&
C\sum_{d\leq 1}
|T_j\partial\partial_x S^d u_i(t,x)|
+
C\sum_{k=1}^2
\sum_{d\leq 1}
|T_j\partial S^d u_k(t,x)|\\
&
+
C
\biggl(
\sum_{k=1}^2
|\partial S u_k(t,x)|
\biggr)
\biggl(
|T_j\partial\partial_x u_i(t,x)|
+
\sum_{k=1}^2
|T_j\partial u_k(t,x)|
\biggr)\\
&
+
C
\biggl(
\sum_{k=1}^2
|T_j\partial u_k(t,x)|
\biggr)
\biggl(
|\partial\partial_x S u_i(t,x)|
+
\sum_{k=1}^2
|\partial S u_k(t,x)|
\biggr).
\end{split}
\end{equation}
\end{lemma}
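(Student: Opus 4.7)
The approach parallels the proof of Lemma \ref{lemma3.1}: use equation (\ref{1}) to express the principal $\partial_t^2$ algebraically in terms of lower-order and spatial derivatives, inverting the coefficient via the smallness assumption. The novel feature in Lemma \ref{lemma3.2} is that $S$ now sits on one of the commuted derivatives; since no smallness is imposed on $|\partial S u_k|$, certain cross terms must be retained as honest quadratic products, which is precisely the source of the last lines of (\ref{eqn:3yokutsukau3}), (\ref{eqn:ad6}) and (\ref{eqn:adad2}).

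First I would apply $\bar Z^a S$ with $|a|\leq 1$ to each equation of (\ref{1}). Lemma \ref{lemma2.1}, used iteratively, distributes the vector fields over the null forms up to new null coefficients $\tilde G_i$, $\tilde H_i$; the commutation relations (\ref{eqn:comm1})--(\ref{eqn:comm4}) generate only lower-order terms, and $[S,\Box]=-2\Box$ produces an extra copy of $\Box u_i$ which, by the equation itself, is again quadratic. Collecting the coefficient of $\partial_t^2\bar Z^a S u_i$ in the resulting identity yields $1$ plus a linear combination of expressions $G^{\cdots,00\gamma}\partial_\gamma u_*$, which by (\ref{eqn:3f}) is bounded below by $1/2$ provided $\varepsilon^*_2$ is taken small enough. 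Dividing isolates $\partial_t^2\bar Z^a S u_i$.

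On the right-hand side the terms split into three groups: (a) spatial and mixed second derivatives $|\partial\partial_x\bar Z^b S^d u_i|$ with $|b|,d\leq 1$, from the Laplacian and from the $(\alpha,\beta)\neq(0,0)$ part of the quasilinear terms; (b) semilinear $H$-type contributions bounded by $|\partial\bar Z^b S^d u_k|$; and (c) genuine quadratic products obtained by distributing $\bar Z^a S$ across the quasilinear coefficient $\partial_\gamma u$. In group (c), almost every term carries a factor of the form $\partial\bar Z^b u_i$ (without $S$), which is bounded by $\varepsilon^*_2$ and is absorbed into the implicit constants of (a) and (b). The only exception is the term of the shape $G^{\cdots,\alpha\beta\gamma}(\partial_\gamma S u_k)(\partial^2_{\alpha\beta}\bar Z^a u_i)$, where neither factor is controlled by $\varepsilon^*_2$. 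Splitting $\partial^2_{\alpha\beta} = \partial\partial_x + \partial_t^2$ and invoking the already proved (\ref{eqn:3yokutsukau}) for $\partial_t^2\bar Z^a u_i$ turns this into the product $|\partial Su_k|\,|\partial\partial_x\bar Z^b u_i|$ displayed on the last line of (\ref{eqn:3yokutsukau3}), plus a piece $|\partial Su_k|\,|\partial\bar Z^b u_k|$ whose second factor is again absorbed by smallness, leaving only $C|\partial Su_k|$.

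The remaining two inequalities follow the same scheme. For (\ref{eqn:ad6}) I would apply $\partial_t\bar Z^a S$; this raises the total order by one, and when the exceptional term above is differentiated once more in $\partial_t$ and reduced via (\ref{eqn:3c}) and (\ref{eqn:ad1}) of Lemma \ref{lemma3.1}, it produces exactly $|\partial Su_k|\cdot\bigl(\sum_{|a|=1,2}|\partial\partial_x^a u_i| + \sum_k|\partial u_k|\bigr)$ on the last line of (\ref{eqn:ad6}). For (\ref{eqn:adad2}) I apply $T_j\bar Z^a S$; here $T_j$ must itself be distributed across the coefficient $\partial_\gamma u$, producing in addition the second product $|T_j\partial u_k|\cdot|\partial\partial_x S u_i|$ on the last line, while the other factor $|T_j\partial Su_k|$ is kept as $|T_j\partial S^d u_k|$ on the right. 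The only real obstacle throughout is the bookkeeping that distinguishes factors bounded by $\varepsilon^*_2$ (absorbable) from factors containing $S$ (which must survive as explicit products); every decision is dictated by which of the commuted vector fields lands on the coefficient versus on the twice-differentiated factor.
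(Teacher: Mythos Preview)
Your proposal is correct and follows the same approach the paper implicitly invokes: the paper omits the proof entirely, referring to Lemma~3.2 of \cite{H2016}, whose argument is precisely the one you sketch---express $\partial_t^2$ algebraically from the equation, invert the principal coefficient using the smallness hypothesis (\ref{eqn:3f}), and track which Leibniz products carry a factor $\partial S u_k$ (not controlled by $\varepsilon_2^*$) so must survive as explicit quadratic terms. Two small expository slips: for (\ref{eqn:ad6}) and (\ref{eqn:adad2}) there is no $\bar Z^a$ present, so you mean to apply $\partial_t S$ and $T_j S$ respectively (equivalently, differentiate the algebraic expression for $\partial_t^2 S u_i$ obtained in the first step by $\partial_t$ or $T_j$); otherwise the bookkeeping you describe is exactly right.
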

For the proof of Lemmas \ref{lemma3.1} and \ref{lemma3.2}, 
we have only to repeat essentially the same argument 
as in the proof of Lemma 3.2 of \cite{H2016}. 
We thus omit the proof. 

Using the above point-wise inequalities, 
let us next consider the bound for 
$M_4(u_1(t))$ and $M_4(u_2(t))$. 
Taking (\ref{eqn:comm1}) into account, 
we have for $|a|+d\leq 3$
\begin{equation}\label{eqn:3basic1}
\begin{split}
&
\Box{\bar Z}^a S^d u_1\\
&
+
G_1^{11,\alpha\beta\gamma}
(\partial_\gamma u_1)
(\partial_{\alpha\beta}^2{\bar Z}^a S^d u_1)
+
G_1^{21,\alpha\beta\gamma}
(\partial_\gamma u_2)
(\partial_{\alpha\beta}^2{\bar Z}^a S^d u_1)\\
&
+
\sum\!{}^{'}
{\tilde G}^{\alpha\beta\gamma}
(\partial_\gamma{\bar Z}^{a'}S^{d'} u_1)
(\partial_{\alpha\beta}^2{\bar Z}^{a''}S^{d''}u_1)\\
&
+
\sum\!{}^{'}
{\hat G}^{\alpha\beta\gamma}
(\partial_\gamma{\bar Z}^{a'}S^{d'}u_2)
(\partial_{\alpha\beta}^2{\bar Z}^{a''}S^{d''}u_1)\\
&
+
\sum\!{}^{''}
{\tilde H}^{\alpha\beta}
(\partial_\alpha{\bar Z}^{a'}S^{d'}u_1)
(\partial_\beta{\bar Z}^{a''}S^{d''}u_1)\\
&
+
\sum\!{}^{''}
{\hat H}^{\alpha\beta}
(\partial_\alpha{\bar Z}^{a'}S^{d'}u_1)
(\partial_\beta{\bar Z}^{a''}S^{d''}u_2)\\
&
+
\sum\!{}^{''}
{\bar H}^{\alpha\beta}
(\partial_\alpha{\bar Z}^{a'}S^{d'}u_2)
(\partial_\beta{\bar Z}^{a''}S^{d''}u_2)
=0.
\end{split}
\end{equation}
Here $\sum\!{}^{'}$ stands for the summation over 
$a'$, $a''$, $d'$ and $d''$ satisfying 
$|a'|+|a''|+d'+d''\leq |a|+d$, 
$|a''|+d''<|a|+d$, 
and $d'+d''\leq d$. 
Similarly, $\sum\!{}^{''}$ stands for the summation over 
$|a'|+|a''|+d'+d''\leq |a|+d$ and $d'+d''\leq d$. 
Just for simplicity of notation, 
we have omitted dependence of the coefficients 
${\tilde G}^{\alpha\beta\gamma}
=
{\tilde G}^{11,\alpha\beta\gamma}_1,\dots,
{\bar H}^{\alpha\beta}={\bar H}^{22,\alpha\beta}_1$ 
on $a'$, $a''$, $d'$ and $d''$. 
Similarly, we have for $|a|+d\leq 3$
\begin{equation}\label{eqn:3basic2}
\begin{split}
&
\Box{\bar Z}^a S^d u_2\\
&
+
G_2^{12,\alpha\beta\gamma}
(\partial_\gamma u_1)
(\partial_{\alpha\beta}^2{\bar Z}^a S^d u_2)
+
G_2^{22,\alpha\beta\gamma}
(\partial_\gamma u_2)
(\partial_{\alpha\beta}^2{\bar Z}^a S^d u_2)\\
&
+
\sum\!{}^{'}
{\tilde G}^{\alpha\beta\gamma}
(\partial_\gamma{\bar Z}^{a'}S^{d'} u_1)
(\partial_{\alpha\beta}^2{\bar Z}^{a''}S^{d''}u_2)\\
&
+
\sum\!{}^{'}
{\hat G}^{\alpha\beta\gamma}
(\partial_\gamma{\bar Z}^{a'}S^{d'}u_2)
(\partial_{\alpha\beta}^2{\bar Z}^{a''}S^{d''}u_2)\\
&
+
\sum\!{}^{''}
{\tilde H}^{\alpha\beta}
(\partial_\alpha{\bar Z}^{a'}S^{d'}u_1)
(\partial_\beta{\bar Z}^{a''}S^{d''}u_2)\\
&
+
\sum\!{}^{''}
{\hat H}^{\alpha\beta}
(\partial_\alpha{\bar Z}^{a'}S^{d'}u_1)
(\partial_\beta{\bar Z}^{a''}S^{d''}u_1)\\
&
+
\sum\!{}^{''}
{\bar H}^{\alpha\beta}
(\partial_\alpha{\bar Z}^{a'}S^{d'}u_2)
(\partial_\beta{\bar Z}^{a''}S^{d''}u_2)
=0.
\end{split}
\end{equation}
Here, ${\tilde G}^{\alpha\beta\gamma}
=
{\tilde G}^{12,\alpha\beta\gamma}_2,\dots,
{\bar H}^{\alpha\beta}={\bar H}^{22,\alpha\beta}_2$. 
In what follows, by $\delta$, $\eta$, and $\mu$, 
we mean sufficiently small positive constants 
such that $\delta<1/9$, $\eta<5/18$, and $\mu<1/4$. 
We use the following quantities for local solutions 
$u=(u_1,u_2)$:
\begin{equation}\label{54}
\begin{split}
&\langle\!\langle u(t)\rangle\!\rangle\\
:=&
\langle\!\langle\!\langle u_1(t)\rangle\!\rangle\!\rangle
+
\sum_{|b|\leq 1}
\||x|\partial{\bar Z}^b u_1(t)\|_{L^\infty({\mathbb R}^3)}
+
\sum_{|b|\leq 2}
\||x|\partial{\bar Z}^b u_1(t)\|_{L^\infty_r L^4_\omega({\mathbb R}^3)}\\
&
+
(1+t)^{-\delta}
\langle\!\langle\!\langle u_2(t)\rangle\!\rangle\!\rangle,
\end{split}
\end{equation}
where for a scalar function $w(t,x)$ 
\begin{equation}\label{55}
\begin{split}
&\langle\!\langle\!\langle w(t)\rangle\!\rangle\!\rangle\\
:=&
(1+t)
\sum_{|b|\leq 1}
\|\partial{\bar Z}^b w(t)\|_{L^\infty({\mathbb R}^3)}
+
\sum_{|b|\leq 1}
\|
|x|\langle t-r\rangle^{1/2}
\partial{\bar Z}^b w(t)
\|_{L^\infty({\mathbb R}^3)}\\
&
+
\sum_{|b|\leq 1}
\|
|x|^{1/2}\partial{\bar Z}^b w(t)
\|_{L^\infty({\mathbb R}^3)}
+
\sum_{|b|\leq 2}
\|
|x|^{1/2}\partial{\bar Z}^b w(t)
\|_{L^\infty_r L^4_\omega({\mathbb R}^3)}\\
&
+
\sum_{|b|\leq 2}
\|
|x|^{1/2}{\bar Z}^b w(t)
\|_{L^\infty({\mathbb R}^3)}
+
\sum_{|b|\leq 3}
\|
|x|^{1/2}{\bar Z}^b w(t)
\|_{L^\infty_r L^4_\omega({\mathbb R}^3)}\\
&
+
\sum_{|b|\leq 1}
\|
|x|^{1/2}S{\bar Z}^b w(t)
\|_{L^\infty({\mathbb R}^3)}
+
\sum_{|b|\leq 2}
\|
|x|^{1/2}S{\bar Z}^b w(t)
\|_{L^\infty_r L^4_\omega({\mathbb R}^3)}\\
&
+
\sum_{|b|\leq 1}
\|
|x|^{1/2}\langle t-r\rangle\partial{\bar Z}^b w(t)
\|_{L^\infty({\mathbb R}^3)}\\
&
+
\sum_{|b|\leq 2}
\|
|x|^{1/2}\langle t-r\rangle\partial{\bar Z}^b w(t)
\|_{L^\infty_r L^4_\omega({\mathbb R}^3)}\\
&
+
\sum_{i=1}^2\sum_{|b|\leq 1}
\|
|x|^{(1/2)+\theta_i}\langle t-r\rangle^{1-\theta_i}\partial{\bar Z}^b w(t)
\|_{L^\infty({\mathbb R}^3)}\\
&
+
\sum_{i=1}^2\sum_{|b|\leq 2}
\|
|x|^{(1/2)+\theta_i}\langle t-r\rangle^{1-\theta_i}\partial{\bar Z}^b w(t)
\|_{L^\infty_r L^4_\omega({\mathbb R}^3)}\\
&
+
\|
\partial Sw(t)
\|_{L^\infty({\mathbb R}^3)}
+
\|
\langle r\rangle\partial Sw(t)
\|_{L^\infty({\mathbb R}^3)}
+
\sum_{|b|\leq 1}
\|
\langle r\rangle\partial{\bar Z}^b Sw(t)
\|_{L^\infty_r L^4_\omega({\mathbb R}^3)},
\end{split}
\end{equation}
where $\theta_1:=(1/2)-2\mu$, $\theta_2:=(1/2)-\eta$, 
\begin{align}
&
{\mathcal N}(u(t))
:=
N_4(u_1(t))
+
\langle t\rangle^{-\delta}
N_4(u_2(t)),
\label{eqn:SN1}
\\
&
{\mathcal M}(u(t))
:=
M_4(u_1(t))
+
\langle t\rangle^{-\delta}
M_4(u_2(t)).
\label{eqn:adadScM}
\end{align}

\begin{proposition}\label{proposition1}
Smooth local solutions $u=(u_1,u_2)$ to $(\ref{1})$ 
defined in $(0,T)\times{\mathbb R}^3$ for some $T>0$ satisfy 
the inequality 
\begin{equation}
{\mathcal M}(u(t))
\leq
C_{KS}
{\mathcal N}(u(t))
+
C
\langle\!\langle u(t)\rangle\!\rangle
\bigl(
{\mathcal M}(u(t))+{\mathcal N}(u(t))
\bigr)
\label{eqn:nov1}
\end{equation}
for every $t\in(0,T)$, 
provided that 
they satisfy 
$$
\sup_{0<t<T}
\langle\!\langle 
u(t)
\rangle\!\rangle
\leq
\min\{\varepsilon^*_1,\,\varepsilon^*_2\}.
$$ 
\end{proposition}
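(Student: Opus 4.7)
The plan is to apply the Klainerman--Sideris inequality (Lemma~\ref{lemma2.8}) to each function $\bar Z^a u_i$ with $|a|\leq 2$ and $i=1,2$, which gives
\begin{equation*}
M_2(\bar Z^a u_i(t))\leq C_{KS}\bigl(N_2(\bar Z^a u_i(t))+t\|\Box\bar Z^a u_i(t)\|_{L^2({\mathbb R}^3)}\bigr).
\end{equation*}
The commutation relations (\ref{eqn:comm2}) immediately give $\sum_{|a|\leq 2}N_2(\bar Z^a u_i(t))\leq CN_4(u_i(t))$, which accounts for the first term $C_{KS}\mathcal{N}(u(t))$ on the right side of (\ref{eqn:nov1}). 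After summing and, for $i=2$, multiplying by $\langle t\rangle^{-\delta}$, the remaining task is to prove
\begin{equation*}
t\sum_{|a|\leq 2}\|\Box\bar Z^a u_1(t)\|_{L^2}+\langle t\rangle^{-\delta}t\sum_{|a|\leq 2}\|\Box\bar Z^a u_2(t)\|_{L^2}\leq C\langle\!\langle u(t)\rangle\!\rangle\bigl(\mathcal{M}(u(t))+\mathcal{N}(u(t))\bigr).
\end{equation*}

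Substituting $\Box\bar Z^a u_i$ from the identities (\ref{eqn:3basic1})--(\ref{eqn:3basic2}) specialized to $d=0$, I obtain a sum of quadratic products $G^{\alpha\beta\gamma}(\partial_\gamma\bar Z^{a'}u_k)(\partial^2_{\alpha\beta}\bar Z^{a''}u_l)$ and $H^{\alpha\beta}(\partial_\alpha\bar Z^{a'}u_k)(\partial_\beta\bar Z^{a''}u_l)$ with $|a'|+|a''|\leq|a|\leq 2$, so at least one of $|a'|,|a''|$ is $\leq 1$ and therefore available for an $L^\infty$-bound. For products whose coefficient satisfies the null condition (preserved under iterated $\bar Z$-differentiation by Lemma~\ref{lemma2.1}), I apply Lemma~\ref{lemma2.2} to extract a $T$-derivative factor, and then Lemma~\ref{lemma2.3} to convert $t|Tw|$ into $|\partial w|+|\Omega w|+|Sw|+\langle t-r\rangle|\partial w|$, thereby cancelling the outer factor of $t$. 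Each resulting product is split in $L^\infty\times L^2$: the lower-derivative factor is placed in $L^\infty$ and absorbed into $\langle\!\langle u(t)\rangle\!\rangle$, whose many weighted pointwise norms---$(1+t)|\partial\bar Z^b w|$, $|x|^{1/2}|\partial\bar Z^b w|$, $\||x|^{1/2}\langle t-r\rangle\partial\bar Z^b w\|_{L^\infty}$, $|x|^{(1/2)+\theta_i}\langle t-r\rangle^{1-\theta_i}|\partial\bar Z^b w|$ and others---are precisely tailored to cover the arising pairings; the higher-derivative factor in $L^2$ is bounded either by $N_4(u_i)$ (when at most three derivatives appear with at least one spatial index) or by $M_4(u_i)$ (when the weight $\langle t-r\rangle$ attaches to a second-derivative factor of $\bar Z^a u$). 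The non-null cross terms in the $u_2$-equation (coefficients $G_2^{12}$, $H_2^{11}$, $H_2^{12}$) do not admit Lemma~\ref{lemma2.2}; these are treated by placing the $u_1$-factor in $L^\infty$, since $u_1$ contributes to $\langle\!\langle u(t)\rangle\!\rangle$ without the $(1+t)^{-\delta}$ discount so that the gain of $t^{-1}$ cancels the outer $t$, while the remaining $u_2$-factor in $L^2$ is absorbed into $\mathcal{N}+\mathcal{M}$ after multiplication by the $\langle t\rangle^{-\delta}$ prefactor on the left.

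Whenever a factor $\partial^2_{\alpha\beta}\bar Z^{a''}u$ lies in $L^2$ with $|a''|=2$ and both indices $\alpha=\beta=0$, the double time derivative $\partial_t^2\bar Z^{a''}u$ escapes direct control by $N_4$; here I invoke Lemma~\ref{lemma3.1} (whose smallness hypothesis matches the standing assumption $\sup\langle\!\langle u\rangle\!\rangle\leq\min\{\varepsilon_1^*,\varepsilon_2^*\}$) to replace $\partial_t^2\bar Z^{a''}u$ by a mixed space-time second derivative (which lies in $N_4$) plus admissible quadratic lower-order contributions that reabsorb into the right side.

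The main obstacle is the extremal case $|a|=2$ with $|a'|=0$ and $|a''|=2$, in which the $L^2$-factor $|\partial^2\bar Z^{a''}u_l|$ carries four derivatives and thus exceeds the reach of $N_4$. The essential move is that after the null decomposition of Lemma~\ref{lemma2.2}, this high-derivative factor always appears either multiplied by the weight $\langle t-r\rangle$ extracted via Lemma~\ref{lemma2.3} from the other factor---so that the combined expression $\langle t-r\rangle|\partial^2\bar Z^{a''}u_l|$ is controlled by $M_4(u_l)$---or as $|T\partial\bar Z^{a''}u_l|$, which a further application of Lemma~\ref{lemma2.3} reduces to derivatives of order at most three plus an $M_4$-controlled piece. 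The delicate bookkeeping required to match derivatives, weights and $\delta$-factors across all arising pairings, together with ensuring that the null structure is preserved through every iterated differentiation, is what makes the argument technically demanding.
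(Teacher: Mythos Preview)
Your outline follows the paper's approach closely: Klainerman--Sideris reduces the task to bounding $t\|\Box\bar Z^a u_i\|_{L^2}$, and you correctly identify the role of Lemmas~\ref{lemma2.2}--\ref{lemma2.3} for the null terms, the direct $L^\infty$ treatment of $u_1$ for the non-null terms in the $u_2$-equation, and the use of Lemma~\ref{lemma3.1} to dispose of $\partial_t^2\bar Z^{a''}u$.

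However, there is one essential device you omit: the splitting into the interior region $\{|x|<(t+1)/2\}$ (characteristic function $\chi_1$) and the wave zone $\{|x|>(t+1)/2\}$ ($\chi_2$). Your null-structure argument via Lemmas~\ref{lemma2.2}--\ref{lemma2.3} is only effective on $\chi_2$. Indeed, Lemma~\ref{lemma2.3} produces factors such as $|Su_k|$ and $|\Omega^b u_k|$ (not $\partial Su_k$), and the only pointwise control on these in $\langle\!\langle u(t)\rangle\!\rangle$ is through the $|x|^{1/2}$-weighted norms, which are useless near the spatial origin. Likewise, the $|x|$-weighted $L^\infty$ and $L^\infty_r L^4_\omega$ norms of $\partial\bar Z^b u_1$ that you would need for the non-null terms with $|a'|=2$ only yield decay on $\chi_2$. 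On the interior region $\chi_1$, the paper does \emph{not} invoke the null condition at all; instead it inserts $1=\langle t-r\rangle^{-1}\langle t-r\rangle$, uses that $\chi_1\langle t-r\rangle^{-1}\lesssim\langle t\rangle^{-1}$, and places the weight $\langle t-r\rangle$ on the second-derivative factor so that it is absorbed into $M_4$ (see the estimates (\ref{61})--(\ref{63})). This is exactly the mechanism that makes $\mathcal M$ appear on the right-hand side of (\ref{eqn:nov1}) and is what allows the bootstrap in Corollary~\ref{corollary1}. Without this two-region decomposition your argument does not close.
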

For $\varepsilon^*_1$ and $\varepsilon^*_2$, 
see Lemmas \ref{lemma3.1} and \ref{lemma3.2}. 
\begin{corollary}\label{corollary1}
There exists a small constant $\varepsilon^*_3$ 
$(0<\varepsilon^*_3<\min\{\varepsilon^*_1,\,\varepsilon^*_2\})$ 
such that 
as long as smooth local solutions $u=(u_1,u_2)$ 
satisfy 
\begin{equation}
\langle\!\langle u(t)\rangle\!\rangle
\leq
\varepsilon^*_3,
\label{eqn:daiji1}
\end{equation}
the estimate
\begin{equation}
M_4(u_1(t)),\,
\langle t\rangle^{-\delta}M_4(u_2(t))
\leq
C{\mathcal N}(u(t))
\label{eqn:mn}
\end{equation}
holds.
\end{corollary}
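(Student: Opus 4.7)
The plan is to derive Corollary \ref{corollary1} from Proposition \ref{proposition1} by a standard absorption argument, since the corollary amounts to moving the $\mathcal{M}(u(t))$ term that appears on the right-hand side of (\ref{eqn:nov1}) over to the left-hand side.

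First, I observe that the hypotheses of Proposition \ref{proposition1} require $\langle\!\langle u(t)\rangle\!\rangle \leq \min\{\varepsilon^*_1,\varepsilon^*_2\}$, so I will pick $\varepsilon^*_3$ satisfying $0<\varepsilon^*_3<\min\{\varepsilon^*_1,\varepsilon^*_2\}$ to ensure applicability. Let $C$ denote the constant appearing in (\ref{eqn:nov1}). I then additionally impose the smallness condition $C\varepsilon^*_3 \leq 1/2$. Concretely, I would choose
\[
\varepsilon^*_3 := \min\Bigl\{\tfrac{1}{2}\varepsilon^*_1,\ \tfrac{1}{2}\varepsilon^*_2,\ \tfrac{1}{2C}\Bigr\}.
\]

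Second, assuming (\ref{eqn:daiji1}) holds, Proposition \ref{proposition1} gives
\[
\mathcal{M}(u(t)) \leq C_{KS}\,\mathcal{N}(u(t)) + C\varepsilon^*_3\,\mathcal{M}(u(t)) + C\varepsilon^*_3\,\mathcal{N}(u(t)).
\]
Since $C\varepsilon^*_3 \leq 1/2$, I can absorb the middle term into the left-hand side, which yields
\[
\mathcal{M}(u(t)) \leq 2\bigl(C_{KS} + C\varepsilon^*_3\bigr)\mathcal{N}(u(t)) \leq C'\,\mathcal{N}(u(t))
\]
with $C' := 2(C_{KS}+1/2)$ (using $C\varepsilon^*_3 \leq 1/2$).

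Finally, by the definition (\ref{eqn:adadScM}) of $\mathcal{M}(u(t))$, both $M_4(u_1(t))$ and $\langle t\rangle^{-\delta} M_4(u_2(t))$ are bounded above by $\mathcal{M}(u(t))$, so we obtain the desired conclusion (\ref{eqn:mn}). There is no real obstacle here; the only subtle point is that the absorption is legitimate precisely because $\mathcal{M}(u(t))$ is already known to be finite for any smooth local solution $u$, so the manipulation of moving $C\varepsilon^*_3\mathcal{M}(u(t))$ to the left-hand side is justified a priori.
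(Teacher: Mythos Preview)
Your proof is correct and takes essentially the same approach as the paper, which simply remarks that Corollary~\ref{corollary1} is an immediate consequence of Proposition~\ref{proposition1} because the term $C\langle\!\langle u(t)\rangle\!\rangle\,\mathcal{M}(u(t))$ can be absorbed into the left-hand side of (\ref{eqn:nov1}) for small $\langle\!\langle u(t)\rangle\!\rangle$. You have merely spelled out this absorption explicitly.
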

\begin{proof}
We prove Proposition \ref{proposition1}. 
Corollary \ref{corollary1} is an immediate consequence of it, 
because 
$
C\langle\!\langle u(t)\rangle\!\rangle
{\mathcal M}(u(t))
$
can be absorbed into the left-hand side of (\ref{eqn:nov1}) 
for small $\langle\!\langle u(t)\rangle\!\rangle$. 
We use (\ref{eqn:3basic1}), (\ref{eqn:3basic2}) with 
$|a|\leq 2$, $d=0$. 
Obviously, it suffices to explain how to bound 
$M_2({\bar Z}^a u_i(t))$ for 
$|a|=2$, $i=1,2$. 

We first bound $M_2({\bar Z}^a u_1(t))$. 
In view of the Klainerman-Sideris inequality (\ref{eqn:KSineq}), 
our task is to bound the $L^2({\mathbb R}^3)$ norm of 
the 2nd, 3rd, $\dots$, and 8th terms 
on the left-hand side of (\ref{eqn:3basic1}) for 
$|a|=2$, $d=0$. 
In fact, it is enough to bound 
the 5th and 8th terms for $|a'|+|a''|=2$ 
because the others can be handled similarly. 
For any fixed $t\in (0,T)$, 
we bound their $L^2$ norm 
over the set 
$\{x\in{\mathbb R}^3\,:\,
|x|\leq (t+1)/2\}$ 
and its complement, separately. 
Let $\chi_1(x)$ be the characteristic function of 
this set, 
and we set $\chi_2(x):=1-\chi_1(x)$. 
Recall that we now have 
$|a'|+|a''|\leq 2$ 
at the $5$th term on the left-hand side of (\ref{eqn:3basic1}). 
For $|a'|\leq 1$, we get by (\ref{eqn:3yokutsukau}) 
\begin{equation}\label{61}
\begin{split}
&\|
\chi_1
(\partial{\bar Z}^{a'}u_2(t))
(\partial^2{\bar Z}^{a''}u_1(t))
\|_{L^2({\mathbb R}^3)}\\
\leq&
C\sum_{|b|\leq |a''|}
\langle t\rangle^{-2+\delta}
\bigl(
\langle t\rangle^{1-\delta}
\|
\partial{\bar Z}^{a'}u_2(t)
\|_{L^\infty}
\bigr)
\|
\langle
t-r
\rangle
\partial\partial_x{\bar Z}^b u_1(t)
\|_{L^2}\\
&
+
C
\sum_{{|b|\leq |a''|}\atop{k=1,2}}
\langle t\rangle^{-(3/2)+2\delta}
\bigl(
\langle t\rangle^{-\delta}
\|
|x|\langle t-r\rangle^{1/2}
\partial{\bar Z}^{a'}u_2(t)
\|_{L^\infty}
\bigr)\\
&
\hspace{3.7cm}
\times
\langle t\rangle^{-\delta}
\|
|x|^{-1}\langle t-r\rangle
\partial{\bar Z}^b u_k(t)
\|_{L^2}\\
\leq&
C
\langle t\rangle^{-(3/2)+2\delta}
\langle\!\langle u(t)\rangle\!\rangle
\bigl(
{\mathcal M}(u(t))
+
{\mathcal N}(u(t))
\bigr),
\end{split}
\end{equation}
where we have used the Hardy inequality at the last inequality. 
For $|a'|=2$ (therefore $|a''|=0$), 
we get by (\ref{eqn:3c}) and the Hardy inequality
\begin{equation}\label{62}
\begin{split}
&\|
\chi_1
(\partial{\bar Z}^{a'}u_2(t))
(\partial^2 u_1(t))
\|_{L^2({\mathbb R}^3)}\\
\leq&
C
\langle t\rangle^{-(3/2)+\delta}
\bigl(
\langle t\rangle^{-\delta}
\|
|x|^{-1}\langle t-r\rangle
\partial{\bar Z}^{a'}u_2(t)
\|_{L^2}
\bigr)
\|
|x|\langle t-r\rangle^{1/2}
\partial\partial_x u_1(t)
\|_{L^\infty}\\
&+
C
\langle t\rangle^{-(3/2)+2\delta}
\bigl(
\langle t\rangle^{-\delta}
\|
|x|^{-1}\langle t-r\rangle
\partial{\bar Z}^{a'}u_2(t)
\|_{L^2}
\bigr)\\
&
\hspace{2.0cm}
\times
\bigl(
\|
|x|\langle t-r\rangle^{1/2}
\partial u_1(t)
\|_{L^\infty}
+
\langle t\rangle^{-\delta}
\|
|x|\langle t-r\rangle^{1/2}
\partial u_2(t)
\|_{L^\infty}
\bigr)\\
\leq&
C
\langle t\rangle^{-(3/2)+2\delta}
\langle\!\langle u(t)\rangle\!\rangle
\bigl(
{\mathcal M}(u(t))
+
{\mathcal N}(u(t))
\bigr).
\end{split}
\end{equation}
For the 8th term on the left-hand side of (\ref{eqn:3basic1}), 
we get, 
assuming $|a'|\leq |a''|$ 
(therefore, $|a'|\leq 1$) 
without loss of generality
\begin{equation}\label{63}
\begin{split}
&\|
\chi_1
(\partial{\bar Z}^{a'}u_2(t))
(\partial{\bar Z}^{a''}u_2(t))
\|_{L^2}\\
\leq&
C
\langle t\rangle^{-(3/2)+2\delta}
\bigl(
\langle t\rangle^{-\delta}
\|
|x|\langle t-r\rangle^{1/2}
\partial{\bar Z}^{a'}u_2(t)
\|_{L^\infty}
\bigr)\\
&
\hspace{2.0cm}
\times
\bigl(
\langle t\rangle^{-\delta}
\|
|x|^{-1}\langle t-r\rangle
\partial{\bar Z}^{a''}u_2(t)
\|_{L^2}
\bigr)\\
\leq&
C
\langle t\rangle^{-(3/2)+2\delta}
\langle\!\langle u(t)\rangle\!\rangle
\bigl(
\langle t\rangle^{-\delta}M_4(u_2(t))
+
\langle t\rangle^{-\delta}N_4(u_2(t))
\bigr)
\end{split}
\end{equation}
in the same way as above. 

Next, let us consider the estimate over the set 
$\{x\in{\mathbb R}^3:|x|>(t+1)/2\}$ 
for any fixed $t\in(0,T)$. 
Recall that 
$\chi_2(x)=1-\chi_1(x)$. 
Since the coefficients 
${\hat G}^{\alpha\beta\gamma}$ 
satisfy the null condition 
thanks to Lemma \ref{lemma2.1}, 
we can first use Lemma \ref{lemma2.2} to get
\begin{equation}\label{64}
\begin{split}
&\|
\chi_2{\hat G}^{\alpha\beta\gamma}
(\partial_\gamma{\bar Z}^{a'}u_2(t))
(\partial_{\alpha\beta}^2{\bar Z}^{a''}u_1(t))
\|_{L^2({\mathbb R}^3)}\\
\leq&
C
\bigl(
\|
\chi_2
(T{\bar Z}^{a'}u_2(t))
(\partial^2{\bar Z}^{a''}u_1(t))
\|_{L^2}
+
\|
\chi_2
(\partial{\bar Z}^{a'}u_2(t))
(T\partial{\bar Z}^{a''}u_1(t))
\|_{L^2}
\bigr)
\end{split}
\end{equation}
and then we use Lemma \ref{lemma2.3} to get
\begin{equation}\label{65}
\begin{split}
&\|
\chi_2
(T{\bar Z}^{a'}u_2(t))
(\partial^2{\bar Z}^{a''}u_1(t))
\|_{L^2}\\
\leq&
C\langle t\rangle^{-1}
\bigl(
\|
\chi_2
(\partial{\bar Z}^{a'}u_2(t))
(\partial^2{\bar Z}^{a''}u_1(t))
\|_{L^2}\\
&\hspace{1.2cm}
+
\sum_{|b|=1}
\|
\chi_2
(\Omega^b{\bar Z}^{a'}u_2(t))
(\partial^2{\bar Z}^{a''}u_1(t))
\|_{L^2}\\
&
\hspace{1.2cm}
+
\|
\chi_2
(S{\bar Z}^{a'}u_2(t))
(\partial^2{\bar Z}^{a''}u_1(t))
\|_{L^2}\\
&
\hspace{1.2cm}
+
\|
\chi_2
\langle t-r\rangle
(\partial_x{\bar Z}^{a'}u_2(t))
(\partial^2{\bar Z}^{a''}u_1(t))
\|_{L^2}
\bigr),
\end{split}
\end{equation}
\begin{equation}\label{66}
\begin{split}
&\|
\chi_2
(\partial{\bar Z}^{a'}u_2(t))
(T\partial{\bar Z}^{a''}u_1(t))
\|_{L^2}\\
\leq&
C\langle t\rangle^{-1}
\bigl(
\|
\chi_2
(\partial{\bar Z}^{a'}u_2(t))
(\partial^2{\bar Z}^{a''}u_1(t))
\|_{L^2}\\
&
\hspace{1.2cm}
+
\sum_{|b|\leq 1}
\|
\chi_2
(\partial{\bar Z}^{a'}u_2(t))
(\partial\Omega^b{\bar Z}^{a''}u_1(t))
\|_{L^2}\\
&
\hspace{1.2cm}
+
\sum_{d\leq 1}
\|
\chi_2
(\partial{\bar Z}^{a'}u_2(t))
(\partial S^d{\bar Z}^{a''}u_1(t))
\|_{L^2}\\
&
\hspace{1.2cm}
+
\|
\chi_2
\langle t-r\rangle
(\partial{\bar Z}^{a'}u_2(t))
(\partial\partial_x{\bar Z}^{a''}u_1(t))
\|_{L^2}
\bigr).
\end{split}
\end{equation}
It suffices to show how to treat 
the 3rd and 4th terms on the right-hand side of 
the second last inequality, 
because 
the other terms can be estimated in a similar way. 
Recall that we are assuming 
$|a'|+|a''|\leq 2$, $|a''|\leq 1$. 
If $|a'|\leq 1$, then 
we get 
\begin{equation}\label{67}
\begin{split}
&\|
\chi_2
(S{\bar Z}^{a'}u_2(t))
(\partial^2{\bar Z}^{a''}u_1(t))
\|_{L^2}\\
\leq&
C
\langle t\rangle^{-1/2}
\|
|x|^{1/2}S{\bar Z}^{a'}u_2(t)
\|_{L^\infty}\\
&
\hspace{1.5cm}
\times
\sum_{|b|\leq |a''|}
\bigl(
\|
\partial\partial_x{\bar Z}^bu_1(t)
\|_{L^2}
+\sum_{k=1,2}
\|
\partial{\bar Z}^b u_k(t)
\|_{L^2}
\bigr)\\
\leq&
C
\langle t\rangle^{-(1/2)+2\delta}
\langle\!\langle u(t)\rangle\!\rangle
{\mathcal N}(u(t)).
\end{split}
\end{equation}
If $|a'|=2$ (hence $|a''|=0$), 
then we get 
\begin{equation}\label{68}
\begin{split}
&\|
\chi_2
(S{\bar Z}^{a'}u_2(t))
(\partial^2 u_1(t))
\|_{L^2}\\
\leq&
C
\langle t\rangle^{-1/2}
\|
|x|^{1/2}S{\bar Z}^{a'}u_2(t)
\|_{L^\infty_r L^4_\omega}
\bigl(
\|
\partial\partial_x u_1(t)
\|_{L^2_r L^4_\omega}
+\sum_{k=1,2}
\|
\partial u_k(t)
\|_{L^2_r L^4_\omega}
\bigr)\\
\leq&
C
\langle t\rangle^{-(1/2)+2\delta}
\langle\!\langle u(t)\rangle\!\rangle
{\mathcal N}(u(t)).
\end{split}
\end{equation}
If $|a'|\leq 1$, then we obtain
\begin{equation}\label{69}
\begin{split}
&\|
\chi_2
\langle t-r\rangle
(\partial_x {\bar Z}^{a'}u_2(t))
(\partial^2{\bar Z}^{a''}u_1(t))
\|_{L^2}\\
\leq&
C
\langle t\rangle^{-1/2}
\|
|x|^{1/2}\langle t-r\rangle
\partial_x{\bar Z}^{a'}u_2(t)
\|_{L^\infty}\\
&
\hspace{1.3cm}
\times
\sum_{|b|\leq |a''|}
\bigl(
\|\partial\partial_x{\bar Z}^b u_1(t)\|_{L^2}
+
\sum_{k=1,2}
\|\partial{\bar Z}^b u_k(t)\|_{L^2}
\bigr)\\
\leq&
C
\langle t\rangle^{-(1/2)+2\delta}
\langle\!\langle u(t)\rangle\!\rangle
{\mathcal N}(u(t)).
\end{split}
\end{equation}
If $|a'|=2$, then we use the $L^4_\omega$ norm as above to obtain
\begin{equation}\label{70}
\begin{split}
&\|
\chi_2
\langle t-r\rangle
(\partial_x {\bar Z}^{a'}u_2(t))
(\partial^2 u_1(t))
\|_{L^2}\\
\leq&
C
\langle t\rangle^{-1/2}
\|
|x|^{1/2}\langle t-r\rangle
\partial_x{\bar Z}^{a'}u_2(t)
\|_{L^\infty_r L^4_\omega}\\
&
\hspace{1.3cm}
\times
\bigl(
\|\partial\partial_x u_1(t)\|_{L^2_r L^4_\omega}
+
\sum_{k=1,2}
\|\partial u_k(t)\|_{L^2_r L^4_\omega}
\bigr)\\
\leq&
C
\langle t\rangle^{-(1/2)+2\delta}
\langle\!\langle u(t)\rangle\!\rangle
{\mathcal N}(u(t)).
\end{split}
\end{equation}
We thus conclude that 
\begin{equation}\label{71}
\|
\chi_2
{\hat G}^{\alpha\beta\gamma}
(\partial_\gamma{\bar Z}^{a'} u_2(t))
(\partial^2_{\alpha\beta}{\bar Z}^{a''}u_1(t))
\|_{L^2({\mathbb R}^3)}
\leq
C
\langle t\rangle^{-(3/2)+2\delta}
\langle\!\langle u(t)\rangle\!\rangle
{\mathcal N}(u(t)).
\end{equation}
Similarly, 
the coefficients ${\bar H}^{\alpha\beta}$ 
satisfy the null condition and thus we can use (\ref{eqn:null4}) to get 
the inequality
\begin{equation}\label{72}
\|
\chi_2
{\bar H}^{\alpha\beta}
(\partial_\alpha{\bar Z}^{a'} u_2(t))
(\partial_\beta{\bar Z}^{a''}u_2(t))
\|_{L^2({\mathbb R}^3)}
\leq
C
\langle t\rangle^{-(3/2)+2\delta}
\langle\!\langle u(t)\rangle\!\rangle
{\mathcal N}(u(t)).
\end{equation}
for $|a'|+|a''|\leq 2$ in the same way as above. 

Let us turn our attention to 
the bound for $M_2({\bar Z}^a u_2(t))$, $|a|\leq 2$. 
Naturally, 
we may focus on the 2nd, 4th, 6th, and 7th terms 
on the left-hand side of (\ref{eqn:3basic2}) 
whose coefficients do not necessarily satisfy the null condition. 
We will show how to treat the 4th and 6th terms, 
because the 2nd and 7th terms can be handled similarly. 
For the 3rd , 5th, and 8th terms whose coefficients 
satisfy the null condition, 
we have only to proceed as 
we did in the treatment of $M_2({\bar Z}^a u_1(t))$, 
thus we may omit the details. 

Let us resume with the estimate of the 4th and 6th terms. 
Recall that Lemma \ref{lemma2.2} has played no role in 
(\ref{61})--(\ref{63}) 
and it has played an essential role in 
(\ref{64})--(\ref{72}). 
Since we can no longer use Lemma \ref{lemma2.2}, 
our task is to consider their bound over the set 
$\{x\in{\mathbb R}^3:|x|>(t+1)/2\}$. 
If $|a'|\leq 1$, then we get by (\ref{eqn:3yokutsukau})
\begin{equation}\label{73}
\begin{split}
&\|
\chi_2
(\partial{\bar Z}^{a'}u_1(t))
(\partial^2{\bar Z}^{a''}u_2(t))
\|_{L^2}\\
\leq&
C
\langle t\rangle^{-1}
\|
|x|\partial{\bar Z}^{a'}u_1(t)
\|_{L^\infty}\\
&
\hspace{1.1cm}
\times
\sum_{|b|\leq |a''|}
\bigl(
\|
\partial\partial_x{\bar Z}^b u_2(t)
\|_{L^2}
+
\sum_{k=1,2}
\|
\partial{\bar Z}^b u_k(t)
\|_{L^2}
\bigr)\\
\leq&
C
\langle t\rangle^{-1+\delta}
\langle\!\langle u(t)\rangle\!\rangle
{\mathcal N}(u(t)).
\end{split}
\end{equation}
If $|a'|=2$, then we get by (\ref{eqn:3c})
\begin{equation}\label{74}
\begin{split}
&\|
\chi_2
(\partial{\bar Z}^{a'}u_1(t))
(\partial^2 u_2(t))
\|_{L^2}\\
\leq&
C
\langle t\rangle^{-1}
\|
|x|\partial{\bar Z}^{a'}u_1(t)
\|_{L^\infty_r L^4_\omega}
\bigl(
\|
\partial\partial_x u_2(t)
\|_{L^2_r L^4_\omega}
+
\sum_{k=1,2}
\|
\partial u_k(t)
\|_{L^2_r L^4_\omega}
\bigr)\\
\leq&
C
\langle t\rangle^{-1+\delta}
\langle\!\langle u(t)\rangle\!\rangle
{\mathcal N}(u(t)).
\end{split}
\end{equation}
Similarly, we obtain
\begin{equation}\label{75}
\|
\chi_2
(\partial{\bar Z}^{a'}u_1(t))
(\partial{\bar Z}^{a''} u_2(t))
\|_{L^2}
\leq
C
\langle t\rangle^{-1+\delta}
\langle\!\langle u(t)\rangle\!\rangle
{\mathcal N}(u(t)).
\end{equation}
Summing up, 
we have finished the proof of Proposition \ref{proposition1}. 
\end{proof}
\section{Energy estimate for $u_1$}
From now on, 
we focus on the energy estimate of the highest order 
$|a|+d=3$; 
the energy estimate of the lower order is easier. 
Following the argument in page 93 of \cite{Al2010}, we obtain 
for the function $g=g(t-r)$ chosen below (see (\ref{91}))
\begin{equation}\label{76}
\begin{split}
&\frac12
\partial_t
\bigl\{
e^g
\bigl(
(\partial_t{\bar Z}^{a}S^{d}u_1)^2
+
|\nabla{\bar Z}^{a}S^{d}u_1|^2\\
&
\hspace{1.2cm}
-
G_1^{i1,\alpha\beta\gamma}
(\partial_\gamma u_i)
(\partial_\beta{\bar Z}^{a}S^{d}u_1)
(\partial_\alpha{\bar Z}^{a}S^{d}u_1)\\
&
\hspace{1.2cm}
+2
G_1^{i1,0\beta\gamma}
(\partial_\gamma u_i)
(\partial_\beta{\bar Z}^{a}S^{d}u_1)
(\partial_t{\bar Z}^{a}S^{d}u_1)
\bigr)
\bigr\}\\
&
+\nabla\cdot\{\cdots\}
+
e^gq
+
e^g(J_{1,1}+J_{1,2}+\cdots+J_{1,5})=0,
\end{split}
\end{equation}
where, and later on as well, summation over 
the repeated index $i$ is assumed from 1 to 2. 
Here, $q=q_1-(1/2)g'(t-r)q_2$, 
\begin{equation}\label{eqn:q1}
\begin{split}
q_1=&
\frac12
G_1^{i1,\alpha\beta\gamma}
(\partial_{t\gamma}^2 u_i)
(\partial_\beta{\bar Z}^{a}S^{d}u_1)
(\partial_\alpha{\bar Z}^{a}S^{d}u_1)\\
&-
G_1^{i1,\alpha\beta\gamma}
(\partial_{\alpha\gamma}^2 u_i)
(\partial_\beta{\bar Z}^{a}S^{d}u_1)
(\partial_t{\bar Z}^{a}S^{d}u_1),
\end{split}
\end{equation}
\begin{equation}\label{eqn:q2}
\begin{split}
q_2
=
&\sum_{j=1}^3
(T_j{\bar Z}^{a}S^{d}u_1)^2
-
G_1^{i1,\alpha\beta\gamma}
(\partial_{\gamma} u_i)
(\partial_\beta{\bar Z}^{a}S^{d}u_1)
(\partial_\alpha{\bar Z}^{a}S^{d}u_1)\\
&+
2
G_1^{i1,\alpha\beta\gamma}
(\partial_{\gamma} u_i)
(\partial_\beta{\bar Z}^{a}S^{d}u_1)
(-\omega_\alpha)
(\partial_t{\bar Z}^{a}S^{d}u_1)
\end{split}
\end{equation}
where, as explained in Lemma \ref{lemma2.2} above, 
$\omega_0=-1$, $\omega_k=x_k/|x|$, $k=1,2,3$. 
Also, (see (\ref{eqn:3basic1}) for $\sum\!{}^{'}$, $\sum\!{}^{''}$)
\begin{align}
&
J_{1,1}
=
\sum\!{}^{'}
{\tilde G}^{\alpha\beta\gamma}
(\partial_\gamma{\bar Z}^{a'}S^{d'}u_1)
(\partial_{\alpha\beta}^2{\bar Z}^{a''}S^{d''}u_1)
(\partial_t{\bar Z}^{a}S^{d}u_1),\label{79}\\
&
J_{1,2}
=
\sum\!{}^{'}
{\hat G}^{\alpha\beta\gamma}
(\partial_\gamma{\bar Z}^{a'}S^{d'}u_2)
(\partial_{\alpha\beta}^2{\bar Z}^{a''}S^{d''}u_1)
(\partial_t{\bar Z}^{a}S^{d}u_1),\label{80}\\
&
J_{1,3}
=
\sum\!{}^{''}
{\tilde H}^{\alpha\beta}
(\partial_\alpha{\bar Z}^{a'}S^{d'}u_1)
(\partial_\beta{\bar Z}^{a''}S^{d''}u_1)
(\partial_t{\bar Z}^{a}S^{d}u_1),\label{81}\\
&
J_{1,4}
=
\sum\!{}^{''}
{\hat H}^{\alpha\beta}
(\partial_\alpha{\bar Z}^{a'}S^{d'}u_1)
(\partial_\beta{\bar Z}^{a''}S^{d''}u_2)
(\partial_t{\bar Z}^{a}S^{d}u_1),\label{82}\\
&
J_{1,5}
=
\sum\!{}^{''}
{\bar H}^{\alpha\beta}
(\partial_\alpha{\bar Z}^{a'}S^{d'}u_2)
(\partial_\beta{\bar Z}^{a''}S^{d''}u_2)
(\partial_t{\bar Z}^{a}S^{d}u_1).\label{83}
\end{align}
In the following, we use the following 
$G(v(t))$ and $L(v(t))$ (recall $\eta<5/18$, $\mu<1/4$), 
which are related to the ghost energy 
and localized energy, respectively:
\begin{equation}\label{84}
\begin{split}
&
G(v(t))
:=
\biggl\{
\sum_{j=1}^3
\biggl(
\sum_{{|a|+d\leq 3}\atop{d\leq 1}}
\|
\langle t-r\rangle^{-(1/2)-\eta}
T_j{\bar Z}^aS^d v(t)
\|_{L^2({\mathbb R}^3)}^2\\
&
\hspace{2.8cm}
+
\sum_{{|a|+d\leq 2}\atop{d\leq 1}}
\|
\langle t-r\rangle^{-(1/2)-\eta}
T_j\partial_t{\bar Z}^aS^d v(t)
\|_{L^2({\mathbb R}^3)}^2
\biggr)
\biggr\}^{1/2},
\end{split}
\end{equation}
\begin{equation}\label{85}
\begin{split}
&
L(v(t)):=
\biggl\{
\sum_{{|a|+d\leq 3}\atop{d\leq 1}}
\biggl(
\|
r^{-(3/2)+\mu}
{\bar Z}^a S^d v(t)
\|_{L^2({\mathbb R}^3)}^2\\
&
\hspace{3.2cm}
+
\|
r^{-(1/2)+\mu}
\partial{\bar Z}^a S^d v(t)
\|_{L^2({\mathbb R}^3)}^2
\biggr)
\biggr\}^{1/2}.
\end{split}
\end{equation}
We remark that 
the norm 
$\|
\langle t-r\rangle^{-(1/2)-\eta}
T_j\partial_t{\bar Z}^aS^d v(t)
\|_{L^2({\mathbb R}^3)}$ 
$(|a|+d\leq 2,\,d\leq 1)$, 
which requires a separate and careful treatment, 
naturally comes up later. 
See, e.g., (\ref{113}) below. 
Just for simplicity, we denote for local solutions 
$u=(u_1,u_2)$, 
$N(u_k(t)):=N_4(u_k(t))$ and $M(u_k(t)):=M_4(u_k(t))$. 
Also, we use the notation (recall $\delta<1/3$)
\begin{align}
&{\mathcal N}_T(u):=
\sup_{0<t<T}{\mathcal N}(u(t)),\label{86}\\
&{\mathcal G}_T(u):=
\biggl(
\int_0^T
G(u_1(t))^2dt
\biggr)^{1/2}
+
\sup_{0<t<T}
\langle t\rangle^{-\delta}
\biggl(
\int_0^t
G(u_2(\tau))^2d\tau
\biggr)^{1/2},\label{87}
\end{align}
\begin{equation}\label{88}
\begin{split}
{\mathcal L}_T(u)
:=&
\sup_{0<t<T}
\langle t\rangle^{-\mu-\delta}
\biggl(
\int_0^t
L(u_1(\tau))^2d\tau
\biggr)^{1/2}\\
&+
\sup_{0<t<T}
\langle t\rangle^{-\mu-(3\delta/2)}
\biggl(
\int_0^t
L(u_2(\tau))^2d\tau
\biggr)^{1/2}.
\end{split}
\end{equation}
The purpose of this section is to show the following:
\begin{proposition}\label{proposition2}
The following inequality holds 
for smooth local solutions to $(\ref{1})$ 
$u=(u_1,u_2)$, 
as long as they satisfy $(\ref{eqn:daiji1})$ for some time interval 
$(0,T):$
\begin{equation}\label{89}
\begin{split}
&\sup_{0<t<T}N(u_1(t))^2
+
\int_0^T G(u_1(t))^2 dt\\
\leq&
CN(u_1(0))^2
+
C\sup_{0<t<T}
\langle\!\langle u(t)\rangle\!\rangle
\bigl(
{\mathcal N}_T(u)^2
+
{\mathcal G}_T(u)^2
+
{\mathcal L}_T(u)^2
\bigr)\\
&+
C{\mathcal N}_T(u)^3
+
C{\mathcal G}_T(u){\mathcal N}_T(u)^2.
\end{split}
\end{equation}
\end{proposition}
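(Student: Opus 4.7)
The plan is to carry out Alinhac's ghost weight energy estimate applied to the differentiated equations for ${\bar Z}^a S^d u_1$ (with $|a|+d\leq 3$, $d\leq 1$), integrate the identity (\ref{76}) in space-time, and then use the fact that the coefficients $G_1^{11}$, $G_1^{21}$, $H_1^{11}$, $H_1^{12}$, $H_1^{22}$ all satisfy the null condition so that every nonlinear contribution can be handled through Lemma \ref{lemma2.2}. Concretely, I would pick a weight $g=g(t-r)$ of the form $g(s)=-\int_{-\infty}^{s}\langle\sigma\rangle^{-1-2\eta}d\sigma$, so that $|g|$ is bounded and $-g'(t-r)/2\ge c\langle t-r\rangle^{-1-2\eta}$. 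Integrating (\ref{76}) from $0$ to $t$, the divergence $\nabla\cdot\{\cdots\}$ vanishes, the $e^g q_2$ contribution produces, after using smallness of $\partial u$ from $\langle\!\langle u\rangle\!\rangle$, a positive lower bound proportional to $\int_0^t\!\!\int_{{\mathbb R}^3}\langle t-r\rangle^{-1-2\eta}|T{\bar Z}^a S^d u_1|^2\,dx\,d\tau$, which is exactly what is summed into $\int_0^T G(u_1)^2\,dt$. The quasi-linear correction terms inside the energy density (the $G_1^{i1}$ cubic pieces) are absorbed on the left by the same smallness, producing the standard energy $N_4(u_1(t))^2$ up to error $C\langle\!\langle u\rangle\!\rangle{\mathcal N}_T(u)^2$.

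The commutator term $q_1$ in (\ref{eqn:q1}) contains a second derivative of $u_i$, which I would place in $L^2$ using the inequality $M_4(u_i(t))\le C\langle t\rangle^{\delta_i}{\mathcal N}(u(t))$ from Corollary \ref{corollary1} (with $\delta_i\in\{0,\delta\}$), and the remaining two factors of $\partial{\bar Z}^a S^d u_1$ in $L^\infty\cdot L^2$ using the weighted pointwise bounds that appear in $\langle\!\langle\!\langle u_1\rangle\!\rangle\!\rangle$. The key point is that the half of $q_1$ carrying $\partial^2_{t\gamma}u_i$ can be traded against $\partial^2_{\alpha\gamma}u_i$ via the tangential decomposition, and the $T$-derivative piece costs only a factor $\langle t-r\rangle^{-1/2-\eta}$ which is absorbable against $G(u_1)$. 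After integration in $t$, these contributions produce at most $C\langle\!\langle u\rangle\!\rangle({\mathcal N}_T^2+{\mathcal G}_T^2+{\mathcal L}_T^2)$.

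For the $J_{1,k}$ terms in (\ref{79})--(\ref{83}), I would split each sum by which of $(a',d')$ and $(a'',d'')$ carries the larger number of derivatives. Since $\{\tilde G\}$, $\{\hat G\}$, $\{\tilde H\}$, $\{\hat H\}$, $\{\bar H\}$ all inherit the null condition from Lemma \ref{lemma2.1}, I apply the pointwise estimates of Lemma \ref{lemma2.2} to each integrand. In the resulting bounds, one factor always appears as $|Tw|$ or $|T\partial w|$; this factor paired with a power of $\langle t-r\rangle^{-1/2-\eta}$ is absorbed into ${\mathcal G}_T(u)$ via Cauchy--Schwarz in $(t,x)$, the complementary factor with $\langle t-r\rangle^{1/2+\eta}$ is removed by the weighted pointwise estimates in $\langle\!\langle u\rangle\!\rangle$, and the lowest regularity factor is placed in $L^\infty$ using the $\|\,\cdot\,\|_{L^\infty_r L^4_\omega}$ norms (controlled via Lemmas \ref{lemma2.4}--\ref{lemma2.6}). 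Where a $u_2$ factor intervenes, the growth $\langle t\rangle^\delta$ is compensated by the $\langle t\rangle^{-\delta}$ built into $\langle\!\langle u\rangle\!\rangle$ and $\mathcal{G}_T,\mathcal{L}_T$. The quasi-linear terms $J_{1,1},J_{1,2}$ with two $u_1$-type factors at top level require one integration by parts in $\partial_\alpha$ (as in Alinhac) to avoid losing a derivative, producing the pure cubic contribution $C{\mathcal N}_T(u)^3+C{\mathcal G}_T(u){\mathcal N}_T(u)^2$.

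The main obstacle I expect is the top-order case $|a'|+d'+|a''|+d''=3$ in $J_{1,1},J_{1,2}$, where one cannot afford any loss of regularity and both factors in the nonlinearity must be handled at the same level. Here the combination of Lemma \ref{lemma2.2} (null structure), the ghost weight (to integrate the bad $T$-derivatives in space-time against $G(u_1)$), the Klainerman--Sideris reduction $M_4\le C\mathcal{N}$ from Corollary \ref{corollary1} (to replace weighted second derivatives by the energy) and the trace estimate (\ref{eqn:interp}) (to supply the pointwise decay for the lowest-order factor with the $\langle t-r\rangle^{1-\theta}$ weight) must all be invoked simultaneously. Balancing these so that $u_2$-factors (with their $\langle t\rangle^\delta$ loss) are always paired with a matching weight coming from ${\mathcal G}_T$ or ${\mathcal L}_T$, thereby producing exactly the error structure on the right-hand side of (\ref{89}), is the delicate bookkeeping step on which the estimate hinges.
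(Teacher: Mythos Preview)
Your outline captures the ghost-weight framework and the role of the null condition correctly, but it misses the central organizational device of the paper's proof: the splitting of ${\mathbb R}^3$, at each fixed time, into the interior region $\{|x|<(t+1)/2\}$ (characteristic function $\chi_1$) and the exterior region $\{|x|>(t+1)/2\}$ (characteristic function $\chi_2$). These two regions are treated by entirely different mechanisms. In the exterior region $r\sim t$, the null structure is exploited via Lemma~\ref{lemma2.2} together with Lemma~\ref{lemma2.3}, and the good $T$-derivatives are absorbed into $G(u_1)$, exactly as you describe. But in the interior region $\langle t-r\rangle\sim\langle t\rangle$, the decomposition of Lemma~\ref{lemma2.3} gives no gain (the $\langle t-r\rangle|\partial_x v|$ term cancels the $\langle t\rangle^{-1}$), so the null condition is \emph{not} invoked there at all. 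Instead, the paper uses the localized-energy norms $L(u_i)$: one places two high-order factors in $L^2$ with the weight $r^{-(1/2)+\mu}$ and the low-order factor in the $r^{(1/2)+\theta_1}\langle t-r\rangle^{1-\theta_1}$-weighted $L^\infty$ or $L^\infty_rL^4_\omega$ norm (Lemma~\ref{lemma2.5}), producing terms such as $\langle t\rangle^{-(1/2)-2\mu+\delta}\langle\!\langle u\rangle\!\rangle L(u_1)L(u_2)$. Your statement that ``every nonlinear contribution can be handled through Lemma~\ref{lemma2.2}'' is therefore not how the proof proceeds and would not close on its own in the interior.

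Two further points. First, your treatment of $q_1$ is backwards: the factor $\partial^2 u_i$ in (\ref{eqn:q1}) carries \emph{no} $\bar Z$'s and is low order, so it is placed in $L^\infty$ (via (\ref{eqn:3c}) and the weighted pointwise norms in $\langle\!\langle u\rangle\!\rangle$), while both top-order factors $\partial{\bar Z}^aS^d u_1$ go into $L^2$; you cannot put a top-order factor in $L^\infty$. Second, no integration by parts is needed for $J_{1,1},J_{1,2}$: the genuinely top-order quasi-linear piece $G_1^{i1,\alpha\beta\gamma}(\partial_\gamma u_i)(\partial^2_{\alpha\beta}{\bar Z}^aS^du_1)$ has already been absorbed into the modified energy density in (\ref{76}), so in $J_{1,1},J_{1,2}$ one has $|a''|+d''\leq 2$ and the second-order factor is controlled directly by Lemmas~\ref{lemma3.1}--\ref{lemma3.2}. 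Finally, after integrating in time the interior contributions produce integrals such as $\int_0^t\langle\tau\rangle^{-(1/2)-2\mu+2\delta}L(u_1)L(u_2)\,d\tau$ and $\int_0^t\langle\tau\rangle^{-1+\eta+\delta}G(u_2)\,d\tau$, which are not obviously bounded by ${\mathcal L}_T^2$ or ${\mathcal G}_T$; the paper closes these by a dyadic decomposition of $(0,t)$ (see (\ref{124})--(\ref{125})), a step your outline does not mention but which is essential.
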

The rest of this section is devoted to the proof of this proposition. 
We postpone the proof of 
the required estimate for 
$\|
\langle t-r\rangle^{-(1/2)-\eta}
T_j\partial_t{\bar Z}^aS^d u_1
\|_{L^2((0,T)\times{\mathbb R}^3)}$, 
$|a|+d\leq 2,\,d\leq 1$ 
(see (\ref{84})) until the end of this section 
because it should be treated separately. 

For any fixed time $t$, 
estimates are carried out 
over the set 
$\{x\in{\mathbb R}^3\,|\,|x|<(t+1)/2\}$ 
and its complement, separately. 
We thus use the functions 
$\chi_1(x)$ and $\chi_2(x)$ again. 

\noindent
{\bf 
Estimate over \boldmath$\{x\in{\mathbb R}^3\,|\,|x|<(t+1)/2\}$
}.\,\,
Recall that $q=q_1-(1/2)g'(t-r)q_2$ 
for $q_1$ and $q_2$ defined in (\ref{eqn:q1}), (\ref{eqn:q2}). 

\noindent{\bf $\cdot$ Estimate of \boldmath$\chi_1 q$}. 
Recall $\theta_1:=(1/2)-2\mu$ (see (\ref{55})). 
Using (\ref{eqn:3c}), we estimate $\chi_1q_1$ as follows:
\begin{equation}\label{90}
\begin{split}
&\|
\chi_1
(\partial^2 u_i(t))
(\partial{\bar Z}^{a}S^{d}u_1(t))
(\partial{\bar Z}^{a}S^{d}u_1(t))
\|_{L^1({\mathbb R}^3)}\\
\leq&
C
\langle t\rangle^{-1+\theta_1}
\|
r^{(1/2)+\theta_1}
\langle t-r\rangle^{1-\theta_1}
\partial^2 u_i(t)
\|_{L^\infty}
\|
r^{-(1/4)-(\theta_1/2)}
\partial{\bar Z}^{a}S^{d}u_1(t)
\|_{L^2}^2\\
\leq&
C
\langle t\rangle^{-(1/2)-2\mu}
\biggl(
\|
r^{(1/2)+\theta_1}
\langle t-r\rangle^{1-\theta_1}
\partial\partial_x u_i(t)
\|_{L^\infty}\\
&
\hspace{2.3cm}
+
\sum_{k=1,2}
\|
r^{(1/2)+\theta_1}
\langle t-r\rangle^{1-\theta_1}
\partial u_k(t)
\|_{L^\infty}
\biggr)\\
&
\hspace{1.8cm}
\times
\|
r^{-(1/2)+\mu}
\partial{\bar Z}^{a}S^{d}u_1(t)
\|_{L^2}^2\\
\leq&
C
\langle t\rangle^{-(1/2)-2\mu+\delta}
\langle\!\langle u(t)\rangle\!\rangle
L(u_1(t))^2,\,\,\,i=1,2.
\end{split}
\end{equation}
For the estimate of $g'(t-r)q_2$, 
we choose $g=g(\rho)$ $(\rho\in{\mathbb R})$ so that 
\begin{equation}\label{91}
g'(\rho)
=
-\langle\rho\rangle^{-1-2\eta}.
\end{equation}
We then obtain
\begin{equation}\label{92}
\begin{split}
&\|
\chi_1
g'(t-r)
(\partial u_i(t))
(\partial{\bar Z}^{a}S^{d}u_1(t))^2
\|_{L^1({\mathbb R}^3)}\\
\leq&
C
\langle t\rangle^{-1-2\eta}
\|
\partial u_i(t)
\|_{L^\infty}
\|
\partial{\bar Z}^{a}S^{d}u_1(t)
\|_{L^2}^2
\leq
C
\langle t\rangle^{-2-2\eta+\delta}
\langle\!\langle u(t)\rangle\!\rangle
N(u_1(t))^2.
\end{split}
\end{equation}
We have finished the estimate of $\chi_1q$. 

\noindent{\bf $\cdot$ Estimate of \boldmath$\chi_1 J_{1,k}$}. 
Next, let us consider the estimate of 
$\chi_1J_{1,k}$ ($k=1,\dots,5$). 
It suffices to deal with $\chi_1J_{1,2}$ 
and $\chi_1J_{1,5}$, 
because the others can be handled similarly. 
For the estimate of $\chi_1J_{1,2}$, 
we must proceed carefully, 
paying attention on the number of 
occurrence of $S$. 
Obviously, we may focus on the case 
$d'+d''=1$; in other cases, the estimate becomes much easier.  
Recall that we are considering the highest-order energy, 
i.e., $|a|+d=3$. 
We thus see $|a|\leq 2$ when $d'+d''=1$. 

\noindent\underline{Case $1$.} $d'=1$, $d''=0$. 

\noindent\underline{Case $1$-$1$. $|a'|=0$, $|a''|\leq 2$.} We use 
(\ref{eqn:3yokutsukau}) and the Sobolev embedding on $S^2$, to get
\begin{equation}\label{93}
\begin{split}
&\|
\chi_1
(\partial S u_2(t))
(\partial^2{\bar Z}^{a''} u_1(t))
(\partial_t{\bar Z}^{a}S u_1(t))
\|_{L^1({\mathbb R}^3)}\\
\leq&
C
\sum_{|b|\leq |a''|}
\biggl(
\|
\chi_1
(\partial S u_2(t))
(\partial\partial_x{\bar Z}^b u_1(t))
(\partial_t{\bar Z}^{a}S u_1(t))
\|_{L^1({\mathbb R}^3)}\\
&
\hspace{1.5cm}
+
\sum_{k=1}^2
\|
\chi_1
(\partial S u_2(t))
(\partial{\bar Z}^b u_k(t))
(\partial_t{\bar Z}^{a}S u_1(t))
\|_{L^1({\mathbb R}^3)}
\biggr)\\
\leq&
C
\sum_{|b|\leq |a''|}
\biggl(
\langle t\rangle^{-1}
\|
\langle r\rangle
\partial Su_2(t)
\|_{L^\infty}
\|
\langle t-r\rangle
\partial\partial_x{\bar Z}^b u_1(t)
\|_{L^2}\\
&
\hspace{2.3cm}
\times
\|
\langle r\rangle^{-1}
\partial_t{\bar Z}^a Su_1(t)
\|_{L^2}\\
&
\hspace{1.5cm}
+
\sum_{k=1}^2
\langle t\rangle^{-1+\theta_1}
\|
r^{-(1/2)+\mu}
\partial Su_2(t)
\|_{L^2_r L^4_\omega}\\
&
\hspace{3.7cm}
\times
\|
r^{(1/2)+\theta_1}
\langle t-r\rangle^{1-\theta_1}
\partial{\bar Z}^b u_k(t)
\|_{L^\infty_r L^4_\omega}\\
&
\hspace{3.7cm}
\times
\|
r^{-(1/2)+\mu}
\partial_t{\bar Z}^a Su_1(t)
\|_{L^2}
\biggr)\\
\leq&
C\langle t\rangle^{-1+2\delta}
\langle\!\langle u(t)\rangle\!\rangle
{\mathcal N}(u(t))L(u_1(t))\\
&+
C\langle t\rangle^{-(1/2)-2\mu+\delta}
\langle\!\langle u(t)\rangle\!\rangle
L(u_1(t))L(u_2(t)),
\end{split}
\end{equation}
where we have used (\ref{eqn:mn}) at the last inequality. 

\noindent\underline{Case $1$-$2$. $|a'|,\,|a''|\leq 1$.} Using 
$\|
\langle r\rangle\partial{\bar Z}^{a'}Su_2(t)
\|_{L^\infty_r L^4_\omega}$ 
and 
$\|
\langle t-r\rangle\partial\partial_x{\bar Z}^{b}u_1(t)
\|_{L^2_r L^4_\omega}$ 
$(|b|\leq |a''|)$, 
we get by suitably modifying the argument in Case 1-1 above
\begin{equation}\label{94}
\begin{split}
&\|
\chi_1
(\partial{\bar Z}^{a'}S u_2(t))
(\partial^2{\bar Z}^{a''} u_1(t))
(\partial_t{\bar Z}^{a}Su_1(t))
\|_{L^1({\mathbb R}^3)}\\
\leq&
C\langle t\rangle^{-1+2\delta}
\langle\!\langle u(t)\rangle\!\rangle
{\mathcal N}(u(t))L(u_1(t))\\
&
+
C\langle t\rangle^{-(1/2)-2\mu+\delta}
\langle\!\langle u(t)\rangle\!\rangle
L(u_1(t))L(u_2(t)).
\end{split}
\end{equation}
\noindent\underline{Case $1$-$3$. $|a'|\leq 2$, $|a''|=0$.} We use 
$
\|
\chi_1 r^{(1/2)+\theta_1}
\langle t-r\rangle^{1-\theta_1}
\partial^2 u_1(t)
\|_{L^\infty({\mathbb R}^3)}
$ 
and obtain 
\begin{equation}\label{95}
\begin{split}
&\|
\chi_1
(\partial{\bar Z}^{a'}Su_2(t))
(\partial^2 u_1(t))
(\partial_t{\bar Z}^a Su_1(t))
\|_{L^1({\mathbb R}^3)}\\
\leq&
C\langle t\rangle^{-(1/2)-2\mu+\delta}
\langle\!\langle u(t)\rangle\!\rangle
L(u_1(t))L(u_2(t)).
\end{split}
\end{equation}
\noindent\underline{Case 2. $d'=0$, $d''=1$.} Recall 
that we are discussing the case $|a|+d=3$. 
Since we always have $|a''|+d''<|a|+d$, 
we know $|a''|\leq 1$ under the condition $d''=1$. 

\noindent\underline{Case $2$-$1$. $|a'|,\,|a''|\leq 1$.} We employ 
(\ref{eqn:3yokutsukau3}) to deal with 
$\partial_t^2{\bar Z}S u_1(t,x)$. 
We get
\begin{equation}\label{96}
\begin{split}
&\|
\chi_1
(\partial{\bar Z}^{a'} u_2(t))
(\partial^2{\bar Z}^{a''}S u_1(t))
(\partial_t{\bar Z}^{a}Su_1(t))
\|_{L^1({\mathbb R}^3)}\\
\leq&
C\langle t\rangle^{-(1/2)-2\mu}
\|\chi_1
r^{(1/2)+\theta_1}
\langle t-r\rangle^{1-\theta_1}
\partial{\bar Z}^{a'} u_2(t)
\|_{L^\infty}\\
&
\hspace{2.1cm}
\times
\|
r^{-(1/2)+\mu}\partial^2{\bar Z}^{a''}S u_1(t)
\|_{L^2}
\|
r^{-(1/2)+\mu}\partial_t{\bar Z}^{a}Su_1(t)
\|_{L^2}\\
\leq&
C\langle t\rangle^{-(1/2)-2\mu+\delta}
\langle\!\langle u(t)\rangle\!\rangle\\
&
\hspace{1cm}
\times
\biggl\{
\sum_{|b|,\,d\leq 1}
\biggl(
\|
r^{-(1/2)+\mu}\partial\partial_x {\bar Z}^b S^d u_1(t)
\|_{L^2}\\
&
\hspace{2.8cm}
+
\sum_{k=1}^2
\|
r^{-(1/2)+\mu}\partial{\bar Z}^b S^d u_k(t)
\|_{L^2}\\
&
\hspace{2.8cm}
+
\sum_{k=1}^2
\|\partial Su_k(t)\|_{L^\infty}
\|r^{-(1/2)+\mu}\partial\partial_x {\bar Z}u_1(t)\|_{L^2}
\biggr)
\biggr\}
L(u_1(t))\\
\leq&
C\langle t\rangle^{-(1/2)-2\mu+2\delta}
\langle\!\langle u(t)\rangle\!\rangle
\bigl(
L(u_1(t))+L(u_2(t))
\bigr)
L(u_1(t)).
\end{split}
\end{equation}
At the last inequality, 
we have used 
$\|\partial Su_2(t)\|_{L^\infty}
\leq
\langle t\rangle^\delta\langle\!\langle u(t)\rangle\!\rangle
$. 
We have also used 
$\langle\!\langle u(t)\rangle\!\rangle^2
\leq
\langle\!\langle u(t)\rangle\!\rangle
$ 
because we are assuming 
smallness of $\langle\!\langle u(t)\rangle\!\rangle$ 
(see (\ref{eqn:daiji1})). 

\noindent\underline{Case $2$-$2$. $|a'|\leq 2$, $|a''|=0$.} Using 
$
\|
\chi_1 r^{(1/2)+\theta_1}
\langle t-r\rangle^{1-\theta_1}
\partial{\bar Z}^{a'}u_2(t)
\|_{L^\infty_r L^4_\omega({\mathbb R}^3)}
$ 
and (\ref{eqn:3yokutsukau2}), we get
\begin{equation}\label{97}
\begin{split}
&\|
\chi_1
(\partial{\bar Z}^{a'} u_2(t))
(\partial^2 S u_1(t))
(\partial_t{\bar Z}^{a}Su_1(t))
\|_{L^1({\mathbb R}^3)}\\
\leq&
C\langle t\rangle^{-(1/2)-2\mu+\delta}
\langle\!\langle u(t)\rangle\!\rangle
\bigl(
L(u_1(t))+L(u_2(t))
\bigr)
L(u_1(t)).
\end{split}
\end{equation}
We have finished the estimate for $\chi_1J_{1,2}$. 

We turn our attention to the estimate for the semi-linear term 
$\chi_1J_{1,5}$. 
It suffices to handle 
$
\|
\chi_1
(\partial{\bar Z}^{a'}u_2(t))
(\partial{\bar Z}^{a''}Su_2(t))
(\partial_t{\bar Z}^a Su_1(t))
\|_{L^1({\mathbb R}^3)}
$ 
for $|a'|+|a''|\leq 2$. 
We use 
$
\|
\chi_1
r^{(1/2)+\theta_1}
\langle t-r\rangle^{1-\theta_1}
\partial{\bar Z}^{a'}u_2(t)
\|_{L^\infty({\mathbb R}^3)}
$
for $|a'|\leq 1$ 
and 
$
\|
\chi_1
r^{(1/2)+\theta_1}
\langle t-r\rangle^{1-\theta_1}
\partial{\bar Z}^{a'}u_2(t)
\|_{L^\infty_r L^4_\omega({\mathbb R}^3)}
$
for $|a'|=2$ 
to get 
\begin{equation}\label{98}
\begin{split}
&\|
\chi_1
(\partial{\bar Z}^{a'}u_2(t))
(\partial{\bar Z}^{a''}Su_2(t))
(\partial_t{\bar Z}^a Su_1(t))
\|_{L^1({\mathbb R}^3)}\\
\leq&
C\langle t\rangle^{-(1/2)-2\mu+\delta}
\langle\!\langle u(t)\rangle\!\rangle
L(u_1(t))L(u_2(t)).
\end{split}
\end{equation}
We have finished the estimate for 
$\chi_1J_{1,k}$, $k=1,\dots,5$. 

\noindent{\bf Estimate over 
\boldmath$\{x\in{\mathbb R}^3\,|\,|x|>(t+1)/2$\}}. 

\noindent{\bf$\cdot$ Estimate of \boldmath$\chi_2 q$}. 
By virtue of the null condition, we can use (\ref{eqn:null3}) 
together with (\ref{eqn:Zha}), (\ref{eqn:3c}) and obtain 
similarly to (\ref{64})--(\ref{65}) 
\begin{equation}\label{99}
\begin{split}
&\|
\chi_2
G_1^{i1,\alpha\beta\gamma}
(\partial_{t\gamma}^2 u_i(t))
(\partial_\beta{\bar Z}^aS^d u_1(t))
(\partial_\alpha{\bar Z}^aS^d u_1(t))
\|_{L^1({\mathbb R}^3)}\\
\leq&
C
\langle t\rangle^{-1}
\sum_{k=1}^2
\biggl(
\sum_{|b|=1}
\|
\chi_2
\partial_t{\bar Z}^bu_k(t)
\|_{L^\infty}
+
\sum_{d'\leq 1}
\|
\chi_2\partial_t S^{d'}u_k(t)
\|_{L^\infty}\\
&
\hspace{1.8cm}
+
\|
\chi_2
\langle t-r\rangle
\partial_t\partial_x u_k(t)
\|_{L^\infty}
\biggr)
\|
\partial{\bar Z}^aS^d u_1
\|_{L^2}^2\\
&+
C
\sum_{k=1}^2
\|
\chi_2
(|\partial_t\partial_x u_k(t)|+
|\partial u_k(t)|)
(T{\bar Z}^aS^d u_1(t))
\partial{\bar Z}^aS^d u_1(t)
\|_{L^1}\\
\leq&
C
\langle t\rangle^{-(3/2)+\delta}
\langle\!\langle u(t)\rangle\!\rangle
N(u_1(t))^2
+
C
\langle t\rangle^{-1+\eta+\delta}
\langle\!\langle u(t)\rangle\!\rangle
G(u_1(t))N(u_1(t)).
\end{split}
\end{equation}
Here we have employed the norm 
$\||x|^{(1/2)+\theta_2}\langle t-r\rangle^{1-\theta_2}
\partial u_k(t)\|_{L^\infty}$, 
$\theta_2=(1/2)-\eta$. 
Naturally, by using (\ref{eqn:null2}) instead of (\ref{eqn:null3}), 
we have a similar estimate for 
the second term on the right-hand side of (\ref{eqn:q1}). 

As for the treatment of $-(1/2)g'(t-r)q_2$, which is 
\begin{equation}\label{100}
\begin{split}
&\frac12
\langle t-r\rangle^{-1-2\eta}
\sum_{j=1}^3
(T_j{\bar Z}^{a}S^{d}u_1)^2\\
&
-
\langle t-r\rangle^{-1-2\eta}
G_1^{i1,\alpha\beta\gamma}
(\partial_{\gamma} u_i)
(\partial_\beta{\bar Z}^{a}S^{d}u_1)
(\partial_\alpha{\bar Z}^{a}S^{d}u_1)\\
&
+
2
\langle t-r\rangle^{-1-2\eta}
G_1^{i1,\alpha\beta\gamma}
(\partial_{\gamma} u_i)
(\partial_\beta{\bar Z}^{a}S^{d}u_1)
(-\omega_\alpha)
(\partial_t{\bar Z}^{a}S^{d}u_1),
\end{split}
\end{equation}
(see (\ref{eqn:q2})) we proceed as above, in order to 
treat the second and third terms on the right-hand side above.  
Namely, we first employ (\ref{eqn:null3}). 
We then use (\ref{eqn:Zha}), the simple inequality 
$
\langle t-r\rangle^{-1-2\eta}
\langle t-r\rangle
\leq 1
$
to get
\begin{equation}\label{101}
\begin{split}
&\langle t-r\rangle^{-1-2\eta}
|T u_k|\\
\leq&
C
\langle t-r\rangle^{-1-2\eta}
\langle t\rangle^{-1}
\bigl(
|\partial u_k|
+
\sum_{|b|=1}
|\Omega^b u_k|
+|S u_k|
+
\langle t-r\rangle
|\partial_x u_k|
\bigr)\\
\leq&
C
\langle t\rangle^{-1}
\bigl(
|\partial u_k|
+
\sum_{|b|=1}
|\Omega^b u_k|
+|S u_k|
\bigr)
\leq
C
|x|^{-1/2}
\langle t\rangle^{-1+\delta}
\langle\!\langle u(t)\rangle\!\rangle.
\end{split}
\end{equation}
Using this inequality, we easily obtain
\begin{equation}\label{102}
\begin{split}
&\|
\chi_2
\langle t-r\rangle^{-1-2\eta}
G_1^{i1,\alpha\beta\gamma}
(\partial_{\gamma} u_i(t))
(\partial_\beta{\bar Z}^{a}S^{d}u_1(t))
(\partial_\alpha{\bar Z}^{a}S^{d}u_1(t))
\|_{L^1({\mathbb R}^3)}\\
\leq&
C\langle t\rangle^{-(3/2)+\delta}
\langle\!\langle u(t)\rangle\!\rangle
N(u_1(t))^2\\
&+
C\langle t\rangle^{-1+\delta}
\langle\!\langle u(t)\rangle\!\rangle
\|
\langle t-r\rangle^{-1-2\eta}
T{\bar Z}^{a}S^{d}u_1(t)
\|_{L^2}
N(u_1(t)).
\end{split}
\end{equation}
Essentially the same estimate as above remains true 
for the third term on the right-hand side of (\ref{eqn:q2}). 

\noindent{\bf$\cdot$ Estimate of \boldmath$\chi_2 J_{1,k}$.} 
For the estimate of $\chi_2 J_{1,k}$ $(k=1,\dots,5)$ 
we must proceed carefully. 
As we did for $\chi_1 J_{1,k}$, 
we may focus on $\chi_2J_{1,2}$ and $\chi_2J_{1,5}$. 
Using (\ref{eqn:null1}), 
we first obtain for $\chi_2J_{1,2}$ 
\begin{equation}\label{103}
\begin{split}
&\|
\chi_2
{\tilde G}^{\alpha\beta\gamma}
(\partial_\gamma{\bar Z}^{a'}S^{d'}u_2(t))
(\partial_{\alpha\beta}^2{\bar Z}^{a''}S^{d''}u_1(t))
(\partial_t{\bar Z}^a S^du_1(t))
\|_{L^1({\mathbb R}^3)}\\
\leq&
C
\bigl(
\|
\chi_2
(T{\bar Z}^{a'}S^{d'}u_2(t))
(\partial^2{\bar Z}^{a''}S^{d''}u_1(t))
\|_{L^2}\\
&
\hspace{0.4cm}
+
\|
\chi_2
(\partial{\bar Z}^{a'}S^{d'}u_2(t))
(T\partial{\bar Z}^{a''}S^{d''}u_1(t))
\|_{L^2}
\bigr)
\|
\partial_t{\bar Z}^aS^du_1(t)
\|_{L^2}\\
=:&
C(K_1+K_2)
\|
\partial_t{\bar Z}^aS^du_1(t)
\|_{L^2}.
\end{split}
\end{equation}
Recall that we are considering the highest-order energy 
$|a|+d=3$, which means that 
we are discussing the case 
$|a'|+|a''|+d'+d''\leq 3$, 
$|a''|+d''\leq 2$, 
and $d'+d''\leq d$. 
Again, we have only to deal with the case 
$d'+d''=d=1$; in other cases, the argument becomes much easier. 

\noindent\underline{Case 1. $d'=1$ and $d''=0$.} 

\noindent\underline{Case 1-1.\,$|a'|=0$ and $|a''|\leq 2$.} 
Recall $\theta_2=(1/2)-\eta$. We get by (\ref{eqn:3yokutsukau}) 
\begin{equation}\label{104}
\begin{split}
K_1
&
=
\|
\chi_2
(TSu_2(t))
(\partial^2{\bar Z}^{a''}u_1(t))
\|_{L^2}\\
&
\leq
C
\langle t\rangle^{-1}
\|
r\langle t-r\rangle^{-1}TSu_2(t)
\|_{L^\infty}
\|
\langle t-r\rangle\partial\partial_x{\bar Z}^{a''}u_1(t)
\|_{L^2}\\
&
\hspace{0.38cm}
+
C
\langle t\rangle^{-1+\eta}
\|
\langle t-r\rangle^{-(1/2)-\eta}TSu_2(t)
\|_{L_r^2 L_\omega^4}\\
&
\hspace{2.2cm}
\times
\biggl(
\sum_{k=1,2}
\|
r^{1-\eta}
\langle t-r\rangle^{(1/2)+\eta}
\partial{\bar Z}^{a''}u_k(t)
\|_{L_r^\infty L_\omega^4}
\biggr)\\
&
\leq
C\langle t\rangle^{-1}
G(u_2(t))M(u_1(t))
+
C\langle t\rangle^{-1+\eta+\delta}
G(u_2(t))\langle\!\langle u(t)\rangle\!\rangle.
\end{split}
\end{equation}
Here, 
to handle 
$\|
r\langle t-r\rangle^{-1}TSu_2(t)
\|_{L^\infty}$, 
we have used the following (see, e.g., (27), (28) in \cite{Zha})
\begin{equation}
[\Omega_{ij},T_k]
=
\delta_{kj}T_i
-
\delta_{ki}T_j,
\quad
\partial_r T_i
=
\sum_{k=1}^3\frac{x_k}{r}T_i\partial_k
\label{eqn:Tcom}
\end{equation}
and the Sobolev-type inequality (see, e.g., (3.19) and (3.14b) in \cite{Sideris2000})
\begin{equation}
r\|v(r\cdot)\|_{L^4_\omega}
\leq
C
\|
\partial_r v
\|_{L^2({\mathbb R}^3)}^{1/2}
\biggl(
\sum_{|b|\leq 1}
\|
\Omega^b v
\|_{L^2({\mathbb R}^3)}^{1/2}
\biggr)
\label{TomKey}
\end{equation}
together with the Sobolev embedding on $S^2$. 
We also get by (\ref{eqn:Zha}), (\ref{eqn:3yokutsukau}), 
and (\ref{eqn:mn})
\begin{equation}\label{107}
\begin{split}
K_2
&=
\|
\chi_2
(\partial Su_2(t))
(T\partial{\bar Z}^{a''}u_1(t))
\|_{L^2}\\
&
\leq
C
\langle t\rangle^{-1}
\|
\langle r\rangle\partial Su_2(t)
\|_{L^\infty}
\|
T\partial{\bar Z}^{a''}u_1(t)
\|_{L^2}
\leq
C
\langle t\rangle^{-2+2\delta}
\langle\!\langle u(t)\rangle\!\rangle
{\mathcal N}(u(t)).
\end{split}
\end{equation}

\noindent\underline{Case 1-2. $|a'|\leq 1$ and $|a''|\leq 1$.} 
Employing 
$\|r\langle t-r\rangle^{-1}T{\bar Z}^{a'}Su_2(t)\|_{L^\infty_r L^4_\omega}$ 
and 
$\|\langle t-r\rangle \partial\partial_x{\bar Z}^{a''}u_1(t)\|_{L^2_r L^4_\omega}$, 
we can suitably modify the argument in Case 1-1 above to get
\begin{equation}\label{108}
\begin{split}
K_1
&=
\|
\chi_2
(T{\bar Z}^{a'}Su_2(t))
(\partial^2{\bar Z}^{a''}u_1(t))
\|_{L^2}\\
&
\leq
C\langle t\rangle^{-1}
G(u_2(t))M(u_1(t))
+
\langle t\rangle^{-1+\eta+\delta}
G(u_2(t))
\langle\!\langle u(t)\rangle\!\rangle.
\end{split}
\end{equation}
Similarly, we obtain
\begin{equation}\label{109}
\begin{split}
K_2
&\leq
C
\langle t\rangle^{-1}
\|
\langle r\rangle
\partial{\bar Z}^{a'} Su_2(t)
\|_{L^\infty_r L^4_\omega}
\|
T\partial{\bar Z}^{a''}u_1(t)
\|_{L_r^2 L_\omega^4}\\
&
\leq
C
\langle t\rangle^{-2+2\delta}
\langle\!\langle u(t)\rangle\!\rangle 
{\mathcal N}(u(t)).
\end{split}
\end{equation}

\noindent\underline{Case 1-3. $|a'|\leq 2$ and $|a''|=0$.} 
We easily get by using (\ref{eqn:3c})
\begin{equation}\label{110}
\begin{split}
&K_1\\
\leq&
C
\langle t \rangle^{-1+\eta}
\|
\langle t-r\rangle^{-(1/2)-\eta}
T{\bar Z}^{a'}S u_2(t)
\|_{L^2}
\|
r^{1-\eta}\langle t-r\rangle^{(1/2)+\eta}
\partial^2 u_1(t)
\|_{L^\infty}\\
\leq&
C
\langle t \rangle^{-1+\eta+\delta}
G(u_2(t))
\langle\!\langle u(t)\rangle\!\rangle.
\end{split}
\end{equation}
We also get by (\ref{eqn:Zha})
\begin{equation}\label{111}
K_2
\leq
\|
\partial{\bar Z}^{a'}S u_2(t)
\|_{L^2}
\|
\chi_2
T\partial u_1(t)
\|_{L^\infty}
\leq
C
\langle t \rangle^{-(3/2)+2\delta}
{\mathcal N}(u(t))
\langle\!\langle u(t)\rangle\!\rangle.
\end{equation}

\noindent\underline{Case 2. $d'=0$ and $d''=1$.} 
We note $|a''|\leq 1$ in this case. 

\noindent\underline{Case 2-1. $|a'|, |a''|\leq 1$.} 
Using (\ref{eqn:Zha}), (\ref{eqn:3yokutsukau2}), (\ref{eqn:3yokutsukau3}), 
we get
\begin{equation}\label{112}
\begin{split}
K_1
&=
\|
\chi_2
(T{\bar Z}^{a'} u_2(t))
(\partial^2{\bar Z}^{a''}S u_1(t))
\|_{L^2}\\
&
\leq
C
\langle t \rangle^{-1/2}
\|
r^{1/2}
T{\bar Z}^{a'}u_2(t)
\|_{L^\infty}
\|
\chi_2
\partial^2
{\bar Z}^{a''}Su_1(t)
\|_{L^2}\\
&
\leq
C
\langle t \rangle^{-(3/2)+\delta}
\langle\!\langle u(t)\rangle\!\rangle
{\mathcal N}(u(t)),
\end{split}
\end{equation}
where we have used (\ref{eqn:3yokutsukau3}) 
along with 
$\|\chi_2\partial Su_2(t)\|_{L^\infty}
\leq
C\langle t\rangle^{-1}
\|\langle r\rangle\partial Su_2(t)\|_{L^\infty}
\leq
C\langle t\rangle^{-1+\delta}
\langle\!\langle u(t)\rangle\!\rangle
\leq
C\langle\!\langle u(t)\rangle\!\rangle
$ 
and 
$\langle\!\langle u(t)\rangle\!\rangle^2
\leq
\langle\!\langle u(t)\rangle\!\rangle$ 
as before. 
We also get
\begin{equation}\label{113}
\begin{split}
K_2=&
\|
\chi_2
(\partial{\bar Z}^{a'} u_2(t))
(T\partial{\bar Z}^{a''}S u_1(t))
\|_{L^2}\\
\leq&
C
\langle t \rangle^{-1+\eta}
\|
r^{1-\eta}
\langle t-r\rangle^{(1/2)+\eta}
\partial{\bar Z}^{a'} u_2(t)
\|_{L^\infty}\\
&
\hspace{1.5cm}
\times
\|
\langle t-r\rangle^{-(1/2)-\eta}
T\partial{\bar Z}^{a''}S u_1(t)
\|_{L^2}\\
\leq&
C
\langle t \rangle^{-1+\eta+\delta}
\langle\!\langle u(t)\rangle\!\rangle
G(u_1(t)).
\end{split}
\end{equation}
We note that 
this is one of the places where 
we encounter a little troublesome norm 
$\|
\langle t-r\rangle^{-(1/2)-\eta}
T_j\partial_t{\bar Z}^aS^d u_i(t)
\|_{L^2}$ 
$(|a|+d\leq 2)$. 

\noindent\underline{Case 2-2. $|a'|\leq 2$ and $|a''|=0$.} 
We naturally modify the argument in Case 2-1 and obtain 
\begin{equation}\label{114}
\begin{split}
K_1
&
\leq
C
\langle t \rangle^{-1/2}
\|
r^{1/2}
T{\bar Z}^{a'}u_2(t)
\|_{L^\infty_r L^4_\omega}
\|
\chi_2
\partial^2 Su_1(t)
\|_{L^2_r L^4_\omega}\\
&
\leq
C
\langle t \rangle^{-(3/2)+\delta}
\langle\!\langle u(t)\rangle\!\rangle
{\mathcal N}(u(t))
\end{split}
\end{equation}
and
\begin{equation}\label{115}
\begin{split}
K_2
&\leq
C
\langle t \rangle^{-1+\eta}
\|
r^{1-\eta}
\langle t-r\rangle^{(1/2)+\eta}
\partial{\bar Z}^{a'} u_2(t)
\|_{L^\infty_r L^4_\omega}\\
&
\hspace{1.8cm}
\times
\|
\langle t-r\rangle^{-(1/2)-\eta}
T\partial S u_1(t)
\|_{L^2_r L^4_\omega}\\
&
\leq
C
\langle t \rangle^{-1+\eta+\delta}
\langle\!\langle u(t)\rangle\!\rangle
G(u_1(t)).
\end{split}
\end{equation}
We have obtained the estimate of 
$\chi_2 J_{1,2}$. As for the semi-linear terms $\chi_2J_{1,5}$, 
we first note that due to (\ref{eqn:null4}), 
the inequality
\begin{equation}\label{116}
\begin{split}
&\|
\chi_2
{\bar H}^{\alpha\beta}
(\partial_\alpha{\bar Z}^{a'}S^{d'}u_2(t))
(\partial_\beta{\bar Z}^{a''}S^{d''}u_2(t))
\|_{L^2({\mathbb R}^3)}\\
\leq&
C
\|
\chi_2
(T{\bar Z}^{a'}S^{d'}u_2(t))
(\partial{\bar Z}^{a''}S^{d''}u_2(t))
\|_{L^2}\\
&+
C
\|
\chi_2
(\partial{\bar Z}^{a'}S^{d'}u_2(t))
(T{\bar Z}^{a''}S^{d''}u_2(t))\|_{L^2}
\end{split}
\end{equation}
holds. Due to symmetry, 
we may suppose $d'=1$, $d''=0$. 
When $|a'|=0$ and $|a''|\leq 2$ or 
$|a'|\leq 1$ and $|a''|\leq 1$, 
we get as in (\ref{115}) 
\begin{equation}\label{117}
\begin{split}
&\|
\chi_2
(T{\bar Z}^{a'}S u_2(t))
(\partial{\bar Z}^{a''}u_2(t))
\|_{L^2}\\
\leq&
C\langle t \rangle^{-1+\eta}
\|
\langle t-r\rangle^{-(1/2)-\eta}T{\bar Z}^{a'}S u_2(t)
\|_{L_r^2 L_\omega^4}\\
&
\hspace{1.4cm}
\times
\|
r^{1-\eta}
\langle t-r\rangle^{(1/2)+\eta}
\partial{\bar Z}^{a''}u_2(t)
\|_{L_r^\infty L_\omega^4}\\
\leq&
C\langle t \rangle^{-1+\eta+\delta}
G(u_2(t))
\langle\!\langle u(t)\rangle\!\rangle.
\end{split}
\end{equation}
Also, by (\ref{eqn:hoshiro}), (\ref{eqn:Zha}), (\ref{eqn:mn}) 
and the commutation relation
\begin{equation}\label{118}
[\partial_j,T_k]
=
\frac{1}{r}
\biggl(
\delta_{jk}
-
\frac{x_jx_k}{r^2}
\biggr)\partial_t,
\end{equation}
we get
\begin{equation}\label{119}
\begin{split}
&\|
\chi_2
(\partial{\bar Z}^{a'}S u_2(t))
(T{\bar Z}^{a''}u_2(t))
\|_{L^2({\mathbb R}^3)}\\
\leq&
C\langle t \rangle^{-1/2}
\|
\partial{\bar Z}^{a'}S u_2(t)
\|_{L_r^2 L_\omega^4}
\|
r^{1/2}
{\tilde \chi}_2
T{\bar Z}^{a''}u_2(t)
\|_{L_r^\infty L_\omega^4}\\
\leq&
C\langle t \rangle^{-(1/2)+\delta}
{\mathcal N}(u(t))
\|
{\tilde \chi}_2
T{\bar Z}^{a''}u_2(t)
\|_{{\dot H}^1}
\leq
C\langle t \rangle^{-(3/2)+2\delta}
{\mathcal N}(u(t))^2.
\end{split}
\end{equation}
Here, we have used ${\tilde \chi}_2$ which is defined as 
${\tilde \chi}_2(x):=\chi((2/(t+1))x)$ 
for a smooth, radially symmetric function $\chi(x)$ 
such that $\chi(x)=0$ for $|x|\leq 1/2$, 
$\chi(x)=1$ for $|x|\geq 1$. 

When $|a'|\leq 2$ and $|a''|=0$, 
we obtain
\begin{equation}\label{120}
\begin{split}
&\|
\chi_2
(T{\bar Z}^{a'}S u_2(t))
(\partial u_2(t))
\|_{L^2({\mathbb R}^3)}\\
\leq&
C\langle t \rangle^{-1+\eta}
G(u_2(t))
\|
r^{1-\eta}\langle t-r\rangle^{(1/2)+\eta}\partial u_2(t)
\|_{L^\infty}\\
\leq&
C\langle t \rangle^{-1+\eta+\delta}
G(u_2(t))
\langle\!\langle u(t)\rangle\!\rangle.
\end{split}
\end{equation}
We also get in the same way as in (\ref{119})
\begin{equation}\label{121}
\begin{split}
&\|
\chi_2
(\partial{\bar Z}^{a'}S u_2(t))
(Tu_2(t))
\|_{L^2({\mathbb R}^3)}\\
\leq&
C
\langle t \rangle^{-1/2}
N(u_2(t))
\|
r^{1/2}{\tilde\chi}_2 Tu_2(t)
\|_{L^\infty}
\leq
C\langle t \rangle^{-(3/2)+2\delta}
{\mathcal N}(u(t))^2.
\end{split}
\end{equation}
We are ready to complete the proof of 
Proposition \ref{proposition2}. 
First, we note that owing to (\ref{91}), 
the function $g(t-r)$ is bounded, 
which means that 
the function $e^{g(t-r)}$ appearing in (\ref{76}) satisfies 
$c\leq e^{g(t-r)}\leq C$ for some positive constants $c$ and $C$. 
Second, we must mention how to deal with 
rather troublesome terms 
\begin{align}
&
\int_0^t
\langle\tau\rangle^{-(1/2)-2\mu+2\delta}
L(u_1(\tau))L(u_2(\tau))d\tau,\label{122}\\
&
\int_0^t
\langle\tau\rangle^{-1+\eta+\delta}G(u_2(\tau))d\tau,\label{123}
\end{align}
which naturally come from the integration of such terms as in 
(\ref{96}) and (\ref{108}) with respect to the time variable. 
As in \cite[p.\,363]{Sogge2003}, 
the idea of dyadic decomposition of the interval $(0,t)$ 
plays a useful role. Without loss of generality, 
we suppose $T>1$. 
For any $t\in(1,T)$, we see
\begin{equation}\label{124}
\begin{split}
&\int_1^t
\langle\tau\rangle^{-(1/2)-2\mu+2\delta}
L(u_1(\tau))L(u_2(\tau))d\tau
=
\sum_{j=0}^N
\int_{2^j}^{2^{j+1}}\cdots d\tau\\
\leq&
\sum_{j=0}^N
(2^j)^{-(1/2)+(9\delta/2)}
\biggl\{
(2^j)^{-\mu-\delta}
\biggl(
\int_{2^j}^{2^{j+1}}
L(u_1(\tau))^2d\tau
\biggr)^{1/2}
\biggr\}\\
&
\hspace{3cm}
\times
\biggl\{
(2^j)^{-\mu-(3\delta/2)}
\biggl(
\int_{2^j}^{2^{j+1}}
L(u_2(\tau))^2d\tau
\biggr)^{1/2}
\biggr\}\\
\leq&
C
\biggl(
\sum_{j=0}^\infty
(2^j)^{-(1/2)+(9\delta/2)}
\biggr)
\sup_{1<\sigma<T}
\langle \sigma\rangle^{-\mu-\delta}
\biggl(
\int_1^\sigma
L(u_1(\tau))^2d\tau
\biggr)^{1/2}\\
&\hspace{3.8cm}\times
\sup_{1<\sigma<T}
\langle \sigma\rangle^{-\mu-(3\delta/2)}
\biggl(
\int_1^\sigma
L(u_2(\tau))^2d\tau
\biggr)^{1/2}.
\end{split}
\end{equation}
Here, and later on as well, 
we abuse the notation 
to mean $t$ by $2^{N+1}$. 
Because $\delta$ is a sufficiently small 
positive number, 
we are able to obtain the desired estimate. 
Also, 
\begin{equation}\label{125}
\begin{split}
&\int_1^t
\langle\tau\rangle^{-1+\eta+\delta}
G(u_2(\tau))d\tau\\
\leq&
\sum_{j=0}^N
\biggl(
\int_{2^j}^{2^{j+1}}\tau^{-2+2(\eta+\delta)}d\tau
\biggr)^{1/2}
\biggl(
\int_{2^j}^{2^{j+1}}G(u_2(\tau))^2d\tau
\biggr)^{1/2}\\
\leq&
C\sum_{j=0}^N(2^j)^{-(1/2)+\eta+2\delta}
(2^j)^{-\delta}
\biggl(
\int_{2^j}^{2^{j+1}}G(u_2(\tau))^2d\tau
\biggr)^{1/2}\\
\leq&
C\sup_{1<\sigma<T}
\langle\sigma\rangle^{-\delta}
\biggl(
\int_1^\sigma
G(u_2(\tau))^2d\tau
\biggr)^{1/2},
\end{split}
\end{equation}
because $\delta$ and $\eta$ are sufficiently small positive numbers. 
The estimate of the integration from $0$ to $1$ is much easier, 
thus we omit it.  
Integrating (\ref{76}) over $(0,t)\times{\mathbb R}^3$, 
we are now able to obtain (\ref{89}), 
except the required estimate for 
$\|
\langle t-r\rangle^{-(1/2)-\eta}
T_j\partial_t{\bar Z}^aS^d u_1
\|_{L^2((0,T)\times{\mathbb R}^3)}$, 
$|a|+d\leq 2,\,d\leq 1$. 
To estimate it, 
we need to replace 
${\bar Z}^aS^d$ 
$(|a|+d=3)$ 
in (\ref{76})--(\ref{83}) 
by 
$\partial_t{\bar Z}^aS^d$ 
$(|a|+d=2)$, 
accordingly modifying 
${\bar Z}^{a'}S^{d'}$, ${\bar Z}^{a''}S^{d''}$ 
in (\ref{79})--(\ref{83}). 
Firstly, note that we then encounter a little troublesome 
$\partial_t^2{\bar Z}^aS^d u_k$ 
$(|a|+d=2)$ 
and 
$\partial_t^3 Su_k$. 
It is safe to say that 
we have already seen how to handle these. 
See, e.g., 
(\ref{96}) and (\ref{112}). 
Namely, it suffices to employ 
(\ref{eqn:3yokutsukau3}) and (\ref{eqn:ad6}) 
together with 
$\|\chi_1\partial S u_k(t)\|_{L^\infty}
\leq
C\langle t\rangle^\delta
\langle\!\langle u(t)\rangle\!\rangle$ and 
$\|\chi_2\partial S u_k(t)\|_{L^\infty}
\leq
C\langle\!\langle u(t)\rangle\!\rangle$. 
Secondly, note that, when using Lemma \ref{lemma2.2}, 
we then encounter 
$T_j\partial_t^2 u_k$, 
$T_j\partial_t^2{\bar Z}^au_k$ $(|a|=1)$, 
and $T_j\partial_t^2Su_k$. 
It is also safe to say that 
we have no trouble dealing with these; 
after employing (\ref{eqn:ad3}), (\ref{eqn:adad1}) 
(together with 
$\|\chi_2\partial\partial_x{\bar Z}^b u_k(t)\|
_{L^\infty_r L^4_\omega}
\leq
C\langle\!\langle u(t)\rangle\!\rangle$, 
$|b|\leq 1$, 
$k=1,2$), 
and (\ref{eqn:adad2}) 
(together with $\|\chi_2\partial S u_k(t)\|
_{L^\infty}
\leq
C\langle\!\langle u(t)\rangle\!\rangle$, 
$\|\chi_2\partial\partial_x S u_k(t)\|
_{L^\infty_r L^4_\omega}
\leq
C\langle\!\langle u(t)\rangle\!\rangle$), 
we can proceed in exactly the same way as we have done above. 
The proof of Proposition \ref{proposition2} has been finished. 
\section{Energy estimate of $u_2$}
This section is devoted to the energy estimate of $u_2$. 
We will show:
\begin{proposition}\label{proposition3}
The following inequality holds 
for smooth local solutions to $(\ref{1})$ 
$u=(u_1,u_2)$, 
as long as they satisfy $(\ref{eqn:daiji1})$ for some time interval 
$(0,T):$
\begin{equation}\label{126}
\begin{split}
&\sup_{0<t<T}
\langle t\rangle^{-2\delta}N(u_2(t))^2
+
\sup_{0<t<T}
\langle t\rangle^{-2\delta}
\int_0^t G(u_2(\tau))^2 d\tau\\
\leq&
CN(u_2(0))^2
+
C\sup_{0<t<T}
\langle\!\langle u(t)\rangle\!\rangle
\bigl(
{\mathcal N}_T(u)^2
+
{\mathcal G}_T(u)^2
+
{\mathcal L}_T(u)^2
\bigr)\\
&+
C{\mathcal N}_T(u)^3
+
C{\mathcal G}_T(u){\mathcal N}_T(u)^2.
\end{split}
\end{equation}
\end{proposition}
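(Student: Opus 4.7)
The plan is to mirror the derivation of Proposition \ref{proposition2}, applied this time to $\bar{Z}^a S^d u_2$ with $|a|+d=3$ using (\ref{eqn:3basic2}). I would first produce an energy identity analogous to (\ref{76})--(\ref{eqn:q2}), isolating a ghost-weighted boundary term, a bulk term $q$ built from the coefficients $G_2^{12}$ and $G_2^{22}$, and five source terms $J_{2,1},\ldots,J_{2,5}$ corresponding to (\ref{79})--(\ref{83}). Estimates over the near region $\{|x|<(t+1)/2\}$ proceed line by line as in (\ref{90})--(\ref{98}): the pointwise inequalities of Lemmas \ref{lemma3.1}--\ref{lemma3.2}, Hardy's inequality, and the $\theta_i$-weighted $L^\infty$ norms built into $\langle\!\langle u(t)\rangle\!\rangle$ reduce each integrand to a factor of the form $\langle\tau\rangle^{-\alpha+C\delta}\langle\!\langle u(\tau)\rangle\!\rangle$ times products of $L(u_k)$, $G(u_k)$, and $\mathcal{N}(u)$, with the roles of $u_1$ and $u_2$ sometimes interchanged. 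Over the far region $\{|x|>(t+1)/2\}$, contributions from null-form coefficients ($G_2^{22}$, $H_2^{22}$, and all derived coefficients whose null character is preserved by Lemma \ref{lemma2.1}) are handled via Lemma \ref{lemma2.2} exactly as in (\ref{99})--(\ref{121}).

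The essentially new feature is the presence of the non-null coefficients $G_2^{12}$, $H_2^{12}$, $H_2^{11}$, for which Lemma \ref{lemma2.2} is unavailable. In every such term, however, the ``non-null slot'' is occupied by a $u_1$-factor, and $u_1$ is controlled uniformly in $t$ by $\langle\!\langle\!\langle u_1(t)\rangle\!\rangle\!\rangle$ with no $\langle t\rangle^\delta$-loss (see (\ref{54})--(\ref{55})). My strategy in the far field is therefore to pull the $u_1$-factor out in $L^\infty$ or $L_r^\infty L_\omega^4$, using bounds such as $\||x|\partial\bar{Z}^b u_1(t)\|_{L^\infty}\leq C\langle\!\langle u(t)\rangle\!\rangle$ and $\||x|^{(1/2)+\theta_i}\langle t-r\rangle^{1-\theta_i}\partial\bar{Z}^b u_1(t)\|_{L_r^\infty L_\omega^4}\leq C\langle\!\langle u(t)\rangle\!\rangle$, which supply $\langle t\rangle^{-1}$ decay without growth, and to place the remaining $u_2$-factor in $L^2$ via $N(u_2)$, $M(u_2)$, $L(u_2)$, or into the ghost norm $G(u_2)$. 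The purely semilinear pieces $H_2^{11}(\partial u_1)(\partial u_1)$ in $J_{2,3}$ are even easier, being already cubic in uniformly small $u_1$-factors after pairing with $\partial_t\bar{Z}^a S^d u_2$.

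After integrating in $\tau\in(0,t)$, the $\langle t\rangle^{-2\delta}$ prefactor on the left of (\ref{126}) appears naturally: with the definitions (\ref{87})--(\ref{88}), $\int_0^t G(u_2)^2\,d\tau$ and $\int_0^t L(u_2)^2\,d\tau$ may grow like $\langle t\rangle^{2\delta}$ and $\langle t\rangle^{2\mu+3\delta}$ respectively, and the RHS integrands typically carry an extra $\langle\tau\rangle^{\delta}$ from the $u_2$-factor, so multiplying both sides by $\langle t\rangle^{-2\delta}$ absorbs this loss. Troublesome time integrals of the form $\int_0^t\langle\tau\rangle^{-1/2-2\mu+C\delta}L(u_1)L(u_2)\,d\tau$ and $\int_0^t\langle\tau\rangle^{-1+\eta+C\delta}G(u_2)\,d\tau$ are then handled by the same dyadic-decomposition scheme as in (\ref{124})--(\ref{125}), adjusted to the new weight.

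The hardest step will be the highest-order non-null quasilinear cross-term $G_2^{12,\alpha\beta\gamma}(\partial_\gamma u_1)(\partial^2_{\alpha\beta}\bar{Z}^a S^d u_2)(\partial_t\bar{Z}^a S^d u_2)$ in the far field (appearing both inside $q$ and inside $J_{2,2}$ with $|a'|=0$, $|a''|+d''=|a|+d$). Without the null form, $\partial^2\bar{Z}^a S^d u_2$ must be placed in $L^2$; Corollary \ref{corollary1} then bounds the relevant pieces by $\langle\tau\rangle^\delta\mathcal{N}(u(\tau))$. The only available decay for the $u_1$-factor is $\|\langle r\rangle\partial u_1\|_{L^\infty}\leq C\langle\!\langle u(t)\rangle\!\rangle$, which supplies $\langle\tau\rangle^{-1}$ in the far field; combined with $\|\partial_t\bar{Z}^a S^d u_2\|_{L^2}\leq C\langle\tau\rangle^\delta\mathcal{N}(u(\tau))$, this produces an integrand of order $\langle\tau\rangle^{-1+2\delta}\langle\!\langle u(\tau)\rangle\!\rangle\mathcal{N}(u(\tau))^2$, which after dyadic summation accumulates a factor $\langle t\rangle^{2\delta}$---exactly what the prefactor $\langle t\rangle^{-2\delta}$ on the left of (\ref{126}) is designed to accommodate. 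A similar care is required for the ghost-weight piece $-\tfrac12 g'(t-r)G_2^{12}(\partial u_1)(\partial\bar{Z}^aS^du_2)^2$, controlled by combining the pointwise decay of $\partial u_1$ with the ghost factor $\langle t-r\rangle^{-1-2\eta}$.
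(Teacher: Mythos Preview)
Your overall plan is the paper's: derive (\ref{127}) for $u_2$, treat the near-cone region exactly as in Section~4, and in the far region exploit the null structure for the $G_2^{22}$-, $H_2^{22}$-pieces while handling the non-null $G_2^{12}$-, $H_2^{12}$-, $H_2^{11}$-pieces by pulling a $u_1$-factor out in a weighted $L^\infty$-norm. The dyadic bookkeeping and the appearance of the $\langle t\rangle^{-2\delta}$ prefactor are also as in the paper.

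There is, however, one concrete gap. You describe the ``hardest step'' as
$G_2^{12,\alpha\beta\gamma}(\partial_\gamma u_1)(\partial_{\alpha\beta}^2{\bar Z}^aS^du_2)(\partial_t{\bar Z}^aS^du_2)$
with the top-order derivatives on $u_2$ and only $\partial u_1$ in the low slot. But that term never has to be estimated directly: it is exactly the quasilinear piece that the multiplier method absorbs into the modified energy in (\ref{127}). What survives as a bulk term is $q_3$ in (\ref{128}), whose $i=1$ part carries $\partial^2 u_1$ (namely $\partial_{t\gamma}^2 u_1$ and $\partial_{\alpha\gamma}^2 u_1$) in the low slot. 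For $\gamma\neq 0$ this is $\partial\partial_x u_1=\partial{\bar Z}^b u_1$ and your bound $\||x|\partial{\bar Z}^bu_1\|_{L^\infty}\le\langle\!\langle u\rangle\!\rangle$ applies; but for $\gamma=0$ you must control $\partial_t^2 u_1$. If you invoke (\ref{eqn:3c}) you pick up a $|\partial u_2|$-contribution, and in the far region this only gives $\|\partial_t^2 u_1\|_{L^\infty}\le C\langle t\rangle^{-1+\delta}\langle\!\langle u\rangle\!\rangle$. Combined with the two factors $\|\partial{\bar Z}^aS^du_2\|_{L^2}\le\langle t\rangle^{\delta}\mathcal N(u)$ you obtain an integrand $\langle\tau\rangle^{-1+3\delta}\langle\!\langle u\rangle\!\rangle\mathcal N(u)^2$, whose time integral is $O(\langle t\rangle^{3\delta})$ and is \emph{not} absorbed by the $\langle t\rangle^{-2\delta}$ prefactor on the left of (\ref{126}).

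The remedy---which the paper spells out in (\ref{135}) and the remark following it---is not to use (\ref{eqn:3c}) here but to use the first equation in (\ref{1}) to write $\partial_t^2u_1=\Delta u_1+(\text{quadratic terms})$. Then $\|\chi_2\Delta u_1\|_{L^\infty}\le C\langle t\rangle^{-1}\||x|\partial\partial_x u_1\|_{L^\infty}\le C\langle t\rangle^{-1}\langle\!\langle u\rangle\!\rangle$ with no $\delta$-loss, while the quadratic remainder carries an extra small factor and contributes $O(\langle t\rangle^{-2+2\delta}\langle\!\langle u\rangle\!\rangle)$, which is harmless. This restores the crucial $\langle\tau\rangle^{-1+2\delta}$ rate. (A minor side remark: the non-null source terms are $J_{2,1}$, $J_{2,3}$, $J_{2,4}$; your $J_{2,2}$ inherits the null condition from $G_2^{22}$ via Lemma~\ref{lemma2.1}.)
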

The rest of this section is devoted to the proof of this proposition. 
As in the previous section, 
we have only to deal with the highest-order energy. 
In the same way as in (\ref{76}), we get
\begin{equation}\label{127}
\begin{split}
&\frac12
\partial_t
\bigl\{
e^g
\bigl(
(\partial_t{\bar Z}^{a}S^{d}u_2)^2
+
|\nabla{\bar Z}^{a}S^{d}u_2|^2\\
&
\hspace{1.3cm}
-
G_2^{i2,\alpha\beta\gamma}
(\partial_\gamma u_i)
(\partial_\beta{\bar Z}^{a}S^{d}u_2)
(\partial_\alpha{\bar Z}^{a}S^{d}u_2)\\
&
\hspace{1.3cm}
+2
G_2^{i2,0\beta\gamma}
(\partial_\gamma u_i)
(\partial_\beta{\bar Z}^{a}S^{d}u_2)
(\partial_t{\bar Z}^{a}S^{d}u_2)
\bigr)
\bigr\}\\
&
+\nabla\cdot\{\cdots\}
+
e^g{\tilde q}
+
e^g(J_{2,1}+J_{2,2}+\cdots+J_{2,5})=0.
\end{split}
\end{equation}
Here, $g=g(t-r)$ is the same as in (\ref{91}), 
${\tilde q}=q_3-(1/2)g'(t-r)q_4$, 
\begin{equation}\label{128}
\begin{split}
q_3
=&
\frac12
G_2^{i2,\alpha\beta\gamma}
(\partial_{t\gamma}^2 u_i)
(\partial_\beta{\bar Z}^{a}S^{d}u_2)
(\partial_\alpha{\bar Z}^{a}S^{d}u_2)\\
&
-
G_2^{i2,\alpha\beta\gamma}
(\partial_{\alpha\gamma}^2 u_i)
(\partial_\beta{\bar Z}^{a}S^{d}u_2)
(\partial_t{\bar Z}^{a}S^{d}u_2),
\end{split}
\end{equation}
\begin{equation}\label{129}
\begin{split}
q_4
=
&\sum_{j=1}^3
(T_j{\bar Z}^{a}S^{d}u_2)^2
-
G_2^{i2,\alpha\beta\gamma}
(\partial_{\gamma} u_i)
(\partial_\beta{\bar Z}^{a}S^{d}u_2)
(\partial_\alpha{\bar Z}^{a}S^{d}u_2)\\
&
+
2
G_2^{i2,\alpha\beta\gamma}
(\partial_{\gamma} u_i)
(\partial_\beta{\bar Z}^{a}S^{d}u_2)
(-\omega_\alpha)
(\partial_t{\bar Z}^{a}S^{d}u_2),
\end{split}
\end{equation}
and 
\begin{align}
&
J_{2,1}
=
\sum\!{}^{'}
{\tilde G}^{\alpha\beta\gamma}
(\partial_\gamma{\bar Z}^{a'}S^{d'}u_1)
(\partial_{\alpha\beta}^2{\bar Z}^{a''}S^{d''}u_2)
(\partial_t{\bar Z}^{a}S^{d}u_2),\label{130}\\
&
J_{2,2}
=
\sum\!{}^{'}
{\hat G}^{\alpha\beta\gamma}
(\partial_\gamma{\bar Z}^{a'}S^{d'}u_2)
(\partial_{\alpha\beta}^2{\bar Z}^{a''}S^{d''}u_2)
(\partial_t{\bar Z}^{a}S^{d}u_2),\label{131}\\
&
J_{2,3}
=
\sum\!{}^{''}
{\tilde H}^{\alpha\beta}
(\partial_\alpha{\bar Z}^{a'}S^{d'}u_1)
(\partial_\beta{\bar Z}^{a''}S^{d''}u_2)
(\partial_t{\bar Z}^{a}S^{d}u_2),\label{132}\\
&
J_{2,4}
=
\sum\!{}^{''}
{\hat H}^{\alpha\beta}
(\partial_\alpha{\bar Z}^{a'}S^{d'}u_1)
(\partial_\beta{\bar Z}^{a''}S^{d''}u_1)
(\partial_t{\bar Z}^{a}S^{d}u_2),\label{133}\\
&
J_{2,5}
=
\sum\!{}^{''}
{\bar H}^{\alpha\beta}
(\partial_\alpha{\bar Z}^{a'}S^{d'}u_2)
(\partial_\beta{\bar Z}^{a''}S^{d''}u_2)
(\partial_t{\bar Z}^{a}S^{d}u_2).\label{134}
\end{align}
Recall that 
we have dealt with $\chi_1 q$ and 
$\chi_1 J_{1,1},\dots,\chi_1 J_{1,5}$ 
without relying upon 
the null condition. 
Therefore, it is possible to handle 
$\chi_1{\tilde q}$ and $\chi_1 J_{2,1},\dots,\chi_1 J_{2,5}$ 
as before. 
We may thus focus on the estimate of 
$\chi_2{\tilde q}$ and $\chi_2 J_{2,1},\dots,\chi_2 J_{2,5}$. 
For the estimate of $\chi_2{\tilde q}$, 
it suffices to show 
how to handle the terms with 
the coefficients $\{G_2^{12,\alpha\beta\gamma}\}$, 
because the coefficients $\{G_2^{22,\alpha\beta\gamma}\}$ 
satisfy the null condition 
and thus we are able to 
treat all the terms with the coefficients $\{G_2^{22,\alpha\beta\gamma}\}$ 
in the same way as before. 

Using the first equation in (\ref{1}) to represent 
$\partial_t^2 u_1$ as 
$\Delta u_1+\mbox{(higher-order terms)}$ 
and then use (\ref{eqn:3c}) to represent 
$\partial_t^2 u_1$ appearing in these higher-order terms, 
we obtain
\begin{equation}\label{135}
\begin{split}
&\|
\chi_2G_2^{12,\alpha\beta\gamma}
(\partial_{t\gamma}^2 u_1(t))
(\partial_\beta{\bar Z}^a S^d u_2(t))
(\partial_\alpha{\bar Z}^a S^d u_2(t))
\|_{L^1({\mathbb R}^3)}\\
\leq&
C
\langle t\rangle^{-1}
\|
\chi_2 |x| 
(\partial_t^2 u_1(t))
(\partial{\bar Z}^a S^d u_2(t))
(\partial{\bar Z}^a S^d u_2(t))
\|_{L^1({\mathbb R}^3)}\\
&+
C
\langle t\rangle^{-1}
\|
\chi_2 |x| 
(\partial_t\partial_x u_1(t))
(\partial{\bar Z}^a S^d u_2(t))
(\partial{\bar Z}^a S^d u_2(t))
\|_{L^1({\mathbb R}^3)}\\
\leq&
C
\langle t\rangle^{-1+2\delta}
\langle\!\langle u\rangle\!\rangle
{\mathcal N}(u(t))^2.
\end{split}
\end{equation}
(If we employed (\ref{eqn:3c}) directly, 
it would meet with the troublesome factor 
$\langle t\rangle^{-1+3\delta}$ on the right-hand side above. 
This is the reason why 
we have used the first equation in (\ref{1}) to represent 
$\partial_t^2 u_1$ as 
$\Delta u_1+\mbox{(higher-order terms)}$.)  
Here, we have used the assumption that 
$\langle\!\langle u\rangle\!\rangle$ is small, 
so that we have 
$\langle\!\langle u\rangle\!\rangle^2
\leq\langle\!\langle u\rangle\!\rangle$. 
In the same way, 
we get
\begin{equation}\label{136}
\begin{split}
&\|
\chi_2G_2^{12,\alpha\beta\gamma}
(\partial_{\alpha\gamma}^2 u_1(t))
(\partial_\beta{\bar Z}^a S^d u_2(t))
(\partial_t{\bar Z}^a S^d u_2(t))
\|_{L^1({\mathbb R}^3)}\\
\leq&
C
\langle t\rangle^{-1+2\delta}
\langle\!\langle u\rangle\!\rangle
{\mathcal N}(u(t))^2.
\end{split}
\end{equation}
It is easy to show
\begin{equation}\label{137}
\begin{split}
&\|
\chi_2G_2^{12,\alpha\beta\gamma}
(\partial_{\gamma} u_1(t))
(\partial_\beta{\bar Z}^{a}S^{d}u_2(t))
(\partial_\alpha{\bar Z}^{a}S^{d}u_2(t))
\|_{L^1({\mathbb R}^3)},\\
&\|
\chi_2
G_2^{12,\alpha\beta\gamma}
(\partial_{\gamma} u_1(t))
(\partial_\beta{\bar Z}^{a}S^{d}u_2(t))
(-\omega_\alpha)
(\partial_t{\bar Z}^{a}S^{d}u_2(t))
\|_{L^1({\mathbb R}^3)}\\
\leq&
C
\langle t\rangle^{-1+2\delta}
\langle\!\langle u\rangle\!\rangle
{\mathcal N}(u(t))^2.
\end{split}
\end{equation}
We have finished the estimate of 
$\chi_2{\tilde q}$. 

We next deal with $\chi_2 J_{2,1},\dots,\chi_2J_{2,5}$. 
We may focus on $\chi_2 J_{2,1}$, $\chi_2 J_{2,3}$, and 
$\chi_2 J_{2,4}$ because the coefficients 
$\{{\hat G}^{\alpha\beta\gamma}\}$ and 
$\{{\bar H}^{\alpha\beta}\}$ satisfy the null condition 
and it is therefore possible to handle 
$\chi_2J_{2,2}$ and $\chi_2J_{2,5}$ in the same way as before. 
Let us first deal with $\chi_2 J_{2,1}$. 
When $|a''|+d''=2$ 
(and thus $|a'|+d'\leq 1$), 
we get by (\ref{eqn:3yokutsukau}), (\ref{eqn:3yokutsukau3})
\begin{equation}\label{138}
\begin{split}
&\|
\chi_2
(\partial_\gamma{\bar Z}^{a'}S^{d'}u_1(t))
(\partial_{\alpha\beta}^2{\bar Z}^{a''}S^{d''}u_2(t))
(\partial_t{\bar Z}^a S^d u_2(t))
\|_{L^1({\mathbb R}^3)}\\
\leq&
C\langle t\rangle^{-1}
\|
\chi_2 |x| \partial_\gamma{\bar Z}^{a'}S^{d'}u_1(t)
\|_{L^\infty({\mathbb R}^3)}\\
&
\hspace{1cm}
\times
\|
\chi_2
\partial_{\alpha\beta}^2{\bar Z}^{a''}S^{d''}u_2(t)
\|_{L^2({\mathbb R}^3)}
\|
\partial_t{\bar Z}^a S^d u_2(t)
\|_{L^2({\mathbb R}^3)}\\
\leq&
C\langle t\rangle^{-1+2\delta}
\langle\!\langle u(t)\rangle\!\rangle
{\mathcal N}(u(t))^2.
\end{split}
\end{equation}
Note that, 
to handle 
$\|
\chi_2
\partial_t^2{\bar Z}^{a''}S u_2(t)
\|_{L^2({\mathbb R}^3)}$ ($|a''|=1$), 
we have again used (\ref{eqn:3yokutsukau3}) 
along with 
$\|\chi_2\partial S u_2(t)\|_{L^\infty}
\leq
C\langle\!\langle u(t)\rangle\!\rangle$ 
(see (\ref{112})) and 
smallness of 
$\langle\!\langle u(t)\rangle\!\rangle$. 

On the other hand, 
when $|a''|+d''\leq 1$ 
(and thus $|a'|+d'\leq 2$), 
we get
\begin{equation}\label{139}
\begin{split}
&\|
\chi_2
(\partial_\gamma{\bar Z}^{a'}S^{d'}u_1(t))
(\partial_{\alpha\beta}^2{\bar Z}^{a''}S^{d''}u_2(t))
(\partial_t{\bar Z}^a S^d u_2(t))
\|_{L^1({\mathbb R}^3)}\\
\leq&
C\langle t\rangle^{-1}
\|
\chi_2 |x| \partial_\gamma{\bar Z}^{a'}S^{d'}u_1(t)
\|_{L^\infty_r L^4_\omega({\mathbb R}^3)}\\
&
\hspace{1cm}
\times
\|
\partial_{\alpha\beta}^2{\bar Z}^{a''}S^{d''}u_2(t)
\|_{L^2_r L^4_\omega({\mathbb R}^3)}
\|
\partial_t{\bar Z}^a S^d u_2(t)
\|_{L^2({\mathbb R}^3)}\\
\leq&
C\langle t\rangle^{-1+2\delta}
\langle\!\langle u(t)\rangle\!\rangle
{\mathcal N}(u(t))^2.
\end{split}
\end{equation}
It is easy to obtain a similar estimate for 
$\chi_2 J_{2,3}$ and $\chi_2 J_{2,4}$. 

Using the basic fact that the integration of 
$(1+\tau)^{-1+2\delta}$ from $0$ to $t$ 
is $O(t^{2\delta})$ for large $t$, 
we can now complete the proof of Proposition \ref{proposition3}, 
except the required estimate for 
$\|
\langle t-r\rangle^{-(1/2)-\eta}
T_j\partial_t{\bar Z}^aS^d u_2
\|_{L^2((0,T)\times{\mathbb R}^3)}$, 
$|a|+d\leq 2,\,d\leq 1$. 
How to handle the similar norm for $u_1$ has been dwelt on 
at the end of the last section, 
and we have only to follow the same approach as there. 
The proof of Proposition \ref{proposition3} has been finished. 
\section{Space-time $L^2$ estimates of $u_1$}
In this section, we will prove the following:
\begin{proposition}\label{proposition4}
The following inequality holds 
for smooth local solutions to $(\ref{1})$ 
$u=(u_1,u_2)$, 
as long as they satisfy $(\ref{eqn:daiji1})$ for some time interval $(0,T):$
\begin{equation}\label{140}
\begin{split}
&\left(
\sup_{0<t<T}
\langle t\rangle^{-\mu-\delta}
\biggl(
\int_0^t
L(u_1(\tau))^2
d\tau
\biggr)^{1/2}
\right)^2\\
\leq&
C
N(u_1(0))^2
+
C
\biggl(
\sup_{0<t<T}
N(u_1(t))
\biggr)
{\mathcal N}_T(u)^2\\
&+
C
\biggl(
\sup_{0<t<T}
\langle\!\langle u(t)\rangle\!\rangle
\biggr)
\biggl(
\sup_{0<t<T}N(u_1(t))
\biggr)^2\\
&+
C\biggl(
\sup_{0<t<T}
\langle\!\langle u(t)\rangle\!\rangle
\biggr)
\left(
\sup_{0<t<T}
\langle t\rangle^{-\mu-\delta}
\biggl(
\int_0^t
L(u_1(\tau))^2
d\tau
\biggr)^{1/2}
\right)^2.
\end{split}
\end{equation}
\end{proposition}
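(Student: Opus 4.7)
The plan is to apply Lemma~\ref{lemma2.7} to each commuted function ${\bar Z}^a S^d u_1$ with $|a|+d\le 3$, $d\le 1$, taking
\[
h^{\alpha\beta}(t,x):=G_1^{11,\alpha\beta\gamma}(\partial_\gamma u_1)+G_1^{21,\alpha\beta\gamma}(\partial_\gamma u_2),
\]
so that the entire quasi-linear principal part of the equation for $u_1$ is absorbed into $P$. The size condition $\sum|h^{\alpha\beta}|\leq 1/2$ required in Lemma~\ref{lemma2.7} is secured by the smallness of $\|\partial u_i(t)\|_{L^\infty}$ built into the hypothesis $\langle\!\langle u(t)\rangle\!\rangle\leq\varepsilon^*_3$. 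By (\ref{eqn:3basic1}), $P({\bar Z}^aS^d u_1)$ is then precisely the sum of the quasi-linear commutators with coefficients $\tilde G$, $\hat G$ and the semi-linear terms with coefficients $\tilde H$, $\hat H$, $\bar H$, all of which satisfy the null condition by Lemma~\ref{lemma2.1}.

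Next I would multiply (\ref{37}) by $(1+T)^{-2\delta}$ so that the prefactor on the left becomes $(1+T)^{-2\mu-2\delta}$, which is precisely the square of the target weight $\langle T\rangle^{-\mu-\delta}$, and then take the supremum over $T\in(0,T_0)$. Summed over $|a|+d\le 3$, $d\le 1$, the data term yields the contribution $CN(u_1(0))^2$. Among the six space-time integrals on the right of (\ref{37}), the four that involve $h$ and $\partial h$ carry pointwise factors of $\partial u_i$, $\partial\partial_x u_i$, each of which is controlled by $\langle\!\langle u(t)\rangle\!\rangle$ after extracting weights of $\langle t\rangle^{-1}$ or $\langle r\rangle^{-1/2}$; combined with the Hardy inequality (to convert the $|u|r^{-1+2\mu}\langle r\rangle^{-2\mu}$ denominators into $|\partial u|$ quantities), each of these four terms produces a small multiple of $L(u_1(t))^2$ that will ultimately be absorbed into the left-hand side via smallness of $\langle\!\langle u\rangle\!\rangle$.

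The heart of the argument is bounding $\int_0^T\!\!\int(|\partial u|+|u|r^{-1+2\mu}\langle r\rangle^{-2\mu})|Pu|\,dxdt$ with $u={\bar Z}^aS^d u_1$. As in Sections 4 and 5, I would split the $x$-integration at $r=(t+1)/2$. In the outer region, Lemma~\ref{lemma2.2} extracts a good derivative $Tv$ from each of the null forms in $Pu$; combined with Lemma~\ref{lemma2.3} this yields a $\langle t\rangle^{-1}$ decay, and the remaining factors are absorbed into $G(u_i(t))$, $N(u_i(t))$, $M(u_i(t))$ in the same spirit as the estimates (\ref{99})--(\ref{121}) for $\chi_2 J_{1,k}$. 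In the inner region, where the null structure is no longer available, I would exploit $\langle t-r\rangle\sim\langle t\rangle$ to trade a derivative for decay, apply the pointwise identities (\ref{eqn:3c}), (\ref{eqn:3yokutsukau}), (\ref{eqn:3yokutsukau3}) to eliminate $\partial_t^2$ derivatives from the commutator terms, and invoke the weighted Sobolev-type bounds (\ref{eqn:ellinfty})--(\ref{eqn:j3}) and (\ref{eqn:interp}). The bound $M_4(u_i(t))\leq C\langle t\rangle^\delta{\mathcal N}(u(t))$ from Corollary~\ref{corollary1} enters through factors of $\|\langle t-r\rangle\partial\partial_x {\bar Z}^b u_i(t)\|_{L^2}$.

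The main obstacle I anticipate is the bookkeeping for the mixed case $d'+d''=d=1$ in the quasi-linear commutators, the analogue of Cases 1--2 in the treatment of $\chi_2 J_{1,2}$, where $\|\partial Su_2(t)\|_{L^\infty}$ is only controlled by $C\langle t\rangle^\delta\langle\!\langle u(t)\rangle\!\rangle$. The worst product there carries a prefactor of size $\langle t\rangle^{-1/2-2\mu+2\delta}L(u_1(t))L(u_2(t))$, whose time integration against the target weight $\langle t\rangle^{-\mu-\delta}$ cannot be closed by a direct Cauchy--Schwarz in $t$; instead I would invoke the dyadic splitting of $(0,t)$ exactly as in (\ref{124}), which (since $\delta$ is taken sufficiently small relative to $\mu$) produces a geometrically summable series dominated by a constant times ${\mathcal L}_T(u)^2$. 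The residual contribution of the form $\langle\!\langle u(t)\rangle\!\rangle\,L(u_1(t))^2$, after time integration and the prefactor $\langle t\rangle^{-2\delta}$, has a small multiplicative constant and can be absorbed into the left of (\ref{140}), closing the estimate.
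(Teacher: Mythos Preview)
Your overall strategy (apply Lemma~\ref{lemma2.7} with the quasi-linear coefficients absorbed into $h^{\alpha\beta}$, split into inner and outer regions, and use dyadic summation for the $h$-terms) matches the paper's. However, one feature of your plan diverges from the paper's proof and would not yield (\ref{140}) as stated.

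The paper does \emph{not} invoke the null structure (Lemma~\ref{lemma2.2}) or the good derivatives $T_i$ anywhere in Section~6. In the outer region $\{|x|>(t+1)/2\}$, the $|\partial u|\,|Pu|$ and $r^{-1+2\mu}\langle r\rangle^{-2\mu}|u|\,|Pu|$ integrals (the paper's $K_{1,1}$--$K_{1,4}$) are bounded directly by the $\langle r\rangle$-weighted Sobolev estimates (\ref{eqn:j2})--(\ref{eqn:j3}): one places the lower-order factor in $\|\chi_2\langle r\rangle\partial{\bar Z}^{a'}S^{d'}u_k\|_{L^\infty}$ (or the $L^\infty_rL^4_\omega$ variant), extracts $\langle\tau\rangle^{-1}$ from $\chi_2\langle r\rangle^{-1}$, and controls the remaining $L^2$ factor by $N(u_k(\tau))$. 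The inner-region estimates likewise use only the $N$ and $M$ norms via (\ref{eqn:ell6})--(\ref{eqn:ellinfty}). Every $Pu$-term is thereby bounded by $C\langle\tau\rangle^{-1+2\delta}N(u_1(\tau)){\mathcal N}(u(\tau))^2$, whose $\tau$-integral is $O(\langle t\rangle^{2\delta})$ and produces exactly the second term on the right of (\ref{140}); no $G(u_i)$, $L(u_2)$, or ${\mathcal L}_T(u)$ contributions ever appear. The dyadic argument is needed \emph{only} for the coefficient terms $K_{1,7}$, $K_{1,8}$ (your ``small multiple of $L(u_1)^2$''), not for any $Pu$-term.

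Your route through Lemma~\ref{lemma2.2} and Lemma~\ref{lemma2.3} in the outer region, and through $L(u_1)L(u_2)$-type products in the inner region, would instead generate terms of the form $\langle\!\langle u\rangle\!\rangle\,{\mathcal G}_T(u)^2$ and $\langle\!\langle u\rangle\!\rangle\,{\mathcal L}_T(u)^2$ on the right-hand side. These are absent from (\ref{140}), so strictly speaking your argument proves a different (weaker) inequality. That weaker inequality would still suffice for the final bootstrap (\ref{165}), since all such terms are already present there, but it does not establish Proposition~\ref{proposition4} as written. The paper's point in Section~6 is precisely that the space-time $L^2$ estimate closes with $N$-norms alone, decoupling it from the ghost-weight machinery; your proposal misses this simplification.
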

The rest of this section is devoted to the proof of this proposition. 
We may focus on the most troublesome case 
$|a|=2$ and $d=1$, 
when considering the estimate of 
$\|
r^{-(3/2)+\mu}{\bar Z}^a S^d u_1
\|_{L^2((0,t)\times{\mathbb R}^3)}$ 
and 
$\|
r^{-(1/2)+\mu}\partial{\bar Z}^a S^d u_1
\|_{L^2((0,t)\times{\mathbb R}^3)}$. 
By Lemma \ref{lemma2.7}, we see that for $|a|=2$
\begin{equation}\label{141}
\begin{split}
&(1+t)^{-2\mu}
\bigl(
\|
r^{-(3/2)+\mu}{\bar Z}^a S u_1
\|_{L^2((0,t)\times{\mathbb R}^3)}^2\\
&
\hspace{2cm}
+
\|
r^{-(1/2)+\mu}\partial{\bar Z}^a S u_1
\|_{L^2((0,t)\times{\mathbb R}^3)}^2
\bigr)\\
\leq&
C\|
\partial{\bar Z}^a S u_1(0)
\|_{L^2({\mathbb R}^3)}^2\\
&+
C\sum_{k=1}^2
\sum\!{}^{'}
\int_0^t\!\!\!\int_{{\mathbb R}^3}
|\partial{\bar Z}^a S u_1|
|\partial{\bar Z}^{a'} S^{d'} u_k|
|\partial^2{\bar Z}^{a''} S^{d''} u_1|dxd\tau\\
&+
C\sum_{k,j=1}^2
\sum\!{}^{''}
\int_0^t\!\!\!\int_{{\mathbb R}^3}
|\partial{\bar Z}^a S u_1|
|\partial{\bar Z}^{a'} S^{d'} u_j|
|\partial{\bar Z}^{a''} S^{d''} u_k|dxd\tau\\
&+
C\sum_{k=1}^2
\sum\!{}^{'}
\int_0^t\!\!\!\int_{{\mathbb R}^3}
r^{-1+2\mu}
\langle r\rangle^{-2\mu}
|{\bar Z}^a S u_1|
|\partial{\bar Z}^{a'} S^{d'} u_k|
|\partial^2{\bar Z}^{a''} S^{d''} u_1|dxd\tau\\
&+
C\sum_{k,j=1}^2
\sum\!{}^{''}
\int_0^t\!\!\!\int_{{\mathbb R}^3}
r^{-1+2\mu}
\langle r\rangle^{-2\mu}
|{\bar Z}^a S u_1|
|\partial{\bar Z}^{a'} S^{d'} u_j|
|\partial{\bar Z}^{a''} S^{d''} u_k|dxd\tau\\
&+
C
\sum_{k=1}^2
\int_0^t\!\!\!\int_{{\mathbb R}^3}
|\partial^2 u_k|
|\partial{\bar Z}^a S u_1|^2dxd\tau\\
&+
C
\sum_{k=1}^2
\int_0^t\!\!\!\int_{{\mathbb R}^3}
r^{-1+2\mu}
\langle r\rangle^{-2\mu}
|\partial^2 u_k|
|{\bar Z}^a S u_1|
|\partial{\bar Z}^a S u_1|dxd\tau\\
&+
C
\sum_{k=1}^2
\int_0^t\!\!\!\int_{{\mathbb R}^3}
r^{-1+2\mu}
\langle r\rangle^{-2\mu}
|\partial u_k|
|\partial{\bar Z}^a S u_1|^2dxd\tau\\
&+
C
\sum_{k=1}^2
\int_0^t\!\!\!\int_{{\mathbb R}^3}
r^{-2+2\mu}
\langle r\rangle^{-2\mu}
|\partial u_k|
|{\bar Z}^a S u_1|
|\partial{\bar Z}^a S u_1|dxd\tau\\
:=&
C\|
\partial{\bar Z}^a S u_1(0)
\|_{L^2((0,t)\times{\mathbb R}^3)}^2
+
\int_0^t\!\!\!\int_{{\mathbb R}^3}
K_{1,1}dxd\tau
+
\cdots
+
\int_0^t\!\!\!\int_{{\mathbb R}^3}
K_{1,8}dxd\tau.
\end{split}
\end{equation}
We again separate 
${\mathbb R}^3$ into 
the two pieces 
$\{x\in{\mathbb R}^3:|x|<(\tau+1)/2\}$ and its complement 
for the estimate of $K_{1,l}$ for $l=1,\dots,6$. 

\noindent{\bf$\cdot$ Estimate of \boldmath$\chi_1 K_{1,l}$ for 
\boldmath$l=1,\dots,6$}. 
By $\chi_1$, we mean the characteristic function 
of the set $\{x\in{\mathbb R}^3:|x|<(\tau+1)/2\}$ 
for any fixed $\tau\in (0,t)$. 

\noindent\underline{Estimate of $\chi_1 K_{1,1}$} 
Let us first consider the case $d'=0$ 
(and hence $d''\leq 1$). 
If $|a'|=2$ (and hence $|a''|=0$), 
then 
we get by (\ref{eqn:ell6}), (\ref{eqn:3yokutsukau2})
\begin{equation}\label{142}
\begin{split}
&\|\chi_1K_{1,1}\|_{L^1({\mathbb R}^3)}\\
\leq&
C\langle\tau\rangle^{-1}
\sum_{k=1}^2
\sum\!{}^{'}
N(u_1(\tau))
\|
\langle \tau-r\rangle
\partial{\bar Z}^{a'}u_k(\tau)
\|_{L^6}
\|
\partial^2 S^{d''}u_1(\tau)
\|_{L^3}\\
\leq&
C\langle\tau\rangle^{-1+2\delta}
N(u_1(\tau))
{\mathcal N}(u(\tau))^2.
\end{split}
\end{equation}
If $|a'|\leq 1$ (and hence $|a''|\leq 1$), 
then we get by (\ref{eqn:ellinfty}) and (\ref{eqn:3yokutsukau3})
\begin{equation}\label{143}
\begin{split}
&\|\chi_1K_{1,1}\|_{L^1({\mathbb R}^3)}\\
\leq&
C\langle\tau\rangle^{-1}
\sum_{k=1}^2
\sum\!{}^{'}
N(u_1(\tau))
\|
\langle \tau-r\rangle
\partial{\bar Z}^{a'}u_k(\tau)
\|_{L^\infty}
\|
\partial^2{\bar Z}^{a''}S^{d''}u_1(\tau)
\|_{L^2}\\
\leq&
C\langle\tau\rangle^{-1+2\delta}
N(u_1(\tau))
{\mathcal N}(u(\tau))^2.
\end{split}
\end{equation}
Here, we have used (\ref{eqn:3yokutsukau3}) 
together with 
$\|\partial S u_k(\tau)\|_{L^\infty}
\leq
\langle\tau\rangle^\delta
\langle\!\langle u(\tau)\rangle\!\rangle
\leq
\varepsilon_3^*\langle\tau\rangle^\delta$. 
(See (\ref{eqn:daiji1}) for $\varepsilon_3^*$.)

Let us next consider the case $d''=0$ (and hence $d'\leq 1$). 
By considering the case 
$|a''|=2$ (and hence $|a'|=0$) 
and 
$|a''|\leq 1$ (and hence $|a'|\leq 1$) separately, 
we are able to obtain
\begin{equation}\label{144}
\|\chi_1K_{1,1}\|_{L^1({\mathbb R}^3)}
\leq
C\langle\tau\rangle^{-1+2\delta}
N(u_1(\tau))
{\mathcal N}(u(\tau))^2
\end{equation}
in the same way as above.

\noindent\underline{Estimate of $\chi_1 K_{1,2}$} 
We can obtain
\begin{equation}\label{145}
\|\chi_1K_{1,2}\|_{L^1({\mathbb R}^3)}
\leq
C\langle\tau\rangle^{-1+2\delta}
N(u_1(\tau))
{\mathcal N}(u(\tau))^2
\end{equation}
in a similar way. 

\noindent\underline{Estimate of $\chi_1 K_{1,3}$} 
Using the Hardy inequality and proceeding as above, 
we can obtain
\begin{equation}\label{146}
\begin{split}
&\|\chi_1 K_{1,3}\|_{L^1({\mathbb R}^3)}\\
\leq&
\sum_{k=1}^2
\sum\!{}^{'}
\|
r^{-1}{\bar Z}^a S u_1
\|_{L^2}
\|
\chi_1
(\partial{\bar Z}^{a'}S^{d'}u_k)
(\partial^2{\bar Z}^{a''}S^{d''}u_1)
\|_{L^2}\\
\leq&
C\langle\tau\rangle^{-1+2\delta}
N(u_1(\tau))
{\mathcal N}(u(\tau))^2.
\end{split}
\end{equation}

\noindent\underline{Estimate of $\chi_1 K_{1,4}$}\,\,\,
In the same way as in (\ref{146}), we obtain 
\begin{equation}\label{147}
\|\chi_1 K_{1,4}\|_{L^1({\mathbb R}^3)}
\leq
C\langle\tau\rangle^{-1+2\delta}
N(u_1(\tau))
{\mathcal N}(u(\tau))^2.
\end{equation}

\noindent\underline{Estimate of $\chi_1 K_{1,5}$}\,\,\,
Using (\ref{eqn:ellinfty}) and (\ref{eqn:3c}), we get
\begin{equation}\label{148}
\|\chi_1 K_{1,5}\|_{L^1({\mathbb R}^3)}
\leq
C\langle\tau\rangle^{-1+\delta}
{\mathcal N}(u(\tau))
N(u_1(\tau))^2.
\end{equation}

\noindent\underline{Estimate of $\chi_1 K_{1,6}$}\,\,\,
Using the Hardy inequality, we can obtain 
\begin{equation}\label{149}
\|\chi_1 K_{1,6}\|_{L^1({\mathbb R}^3)}
\leq
C\langle\tau\rangle^{-1+\delta}
{\mathcal N}(u(\tau))
N(u_1(\tau))^2.
\end{equation}

\noindent{\bf$\cdot$ Estimate of \boldmath$\chi_2 K_{1,l}$ for 
\boldmath$l=1,\dots,6$}.\,\,\,
We next consider $\chi_2 K_{1,l}$ for $l=1,\dots,6$. 

\noindent\underline{Estimate of $\chi_2 K_{1,1}$}\,\,\,
If $|a''|+d''=2$ (and hence $|a'|+d'\leq1$), then 
we get by (\ref{eqn:j3}) and (\ref{eqn:3yokutsukau3})
\begin{equation}\label{150}
\begin{split}
&\|
\chi_2 K_{1,1}
\|_{L^1({\mathbb R}^3)}\\
\leq&
C
\langle\tau\rangle^{-1}
\sum_{k=1}^2
\sum_{{|a'|+d'\leq1}\atop{|a''|+d''=2}}
N(u_1(\tau))
\|
\chi_2
\langle r\rangle
\partial{\bar Z}^{a'}S^{d'}u_k(\tau)
\|_{L^\infty}
\|
\chi_2
\partial^2{\bar Z}^{a''}S^{d''}u_1
\|_{L^2}\\
\leq&
C
\langle\tau\rangle^{-1+2\delta}
N(u_1(\tau))
{\mathcal N}(u(\tau))^2.
\end{split}
\end{equation}
Here we have used (\ref{eqn:3yokutsukau3}) 
along with 
$\|\chi_2\partial Su_2(\tau)\|_{L^\infty}
\leq
C\langle\!\langle u(\tau)\rangle\!\rangle
\leq
C\varepsilon_3^*$.
If $|a''|+d''=1$ (and hence $|a'|+d'\leq 2$), 
then we get by (\ref{eqn:j2})
\begin{equation}\label{151}
\begin{split}
&\|
\chi_2 K_{1,1}
\|_{L^1({\mathbb R}^3)}\\
\leq&
C
\langle\tau\rangle^{-1}
\sum_{k=1}^2
\sum_{{|a'|+d'\leq 2}\atop{|a''|+d''=1}}
N(u_1(\tau))
\|
\chi_2
\langle r\rangle
\partial{\bar Z}^{a'}S^{d'}u_k(\tau)
\|_{L^\infty_r L^4_\omega}\\
&\hspace{4.5cm}
\times
\|
\partial^2{\bar Z}^{a''}S^{d''}u_1
\|_{L^2_r L^4_\omega}\\
\leq&
C
\langle\tau\rangle^{-1+2\delta}
N(u_1(\tau))
{\mathcal N}(u(\tau))^2.
\end{split}
\end{equation}
On the other hand, 
if $|a''|+d''=0$ (and hence $|a'|+d'\leq 3$), 
then we get by using (\ref{eqn:j3}) and (\ref{eqn:3c})
\begin{equation}\label{152}
\begin{split}
&\|
\chi_2 K_{1,1}
\|_{L^1({\mathbb R}^3)}\\
\leq&
C
\langle\tau\rangle^{-1}
\sum_{k=1}^2
\sum_{|a'|+d'\leq 3}
N(u_1(\tau))
\|
\partial{\bar Z}^{a'}S^{d'}u_k(\tau)
\|_{L^2}
\|
\chi_2
\langle r\rangle
\partial^2u_1(\tau)
\|_{L^\infty}\\
\leq&
C
\langle\tau\rangle^{-1+2\delta}
N(u_1(\tau))
{\mathcal N}(u(\tau))^2.
\end{split}
\end{equation}
\noindent\underline{Estimate of $\chi_2 K_{1,2}$}\,\,\,
Using (\ref{eqn:j3}), we can easily get
\begin{equation}\label{153}
\|
\chi_2 K_{1,2}
\|_{L^1({\mathbb R}^3)}
\leq
C
\langle\tau\rangle^{-1+2\delta}
N(u_1(\tau))
{\mathcal N}(u(\tau))^2.
\end{equation}
\noindent\underline{Estimate of $\chi_2 K_{1,3}$}\,\,\,
Arguing as in (\ref{150})--(\ref{152}) and using the Hardy inequality, we get
\begin{equation}\label{154}
\begin{split}
&\|
\chi_2 K_{1,3}
\|_{L^1({\mathbb R}^3)}\\
\leq&
C
\sum_{k=1}^2
\sum\!{}^{'}
\|r^{-1}{\bar Z}^aSu_1(\tau)\|_{L^2}
\|
\chi_2
(\partial{\bar Z}^{a'}S^{d'}u_k(\tau))
(\partial^2{\bar Z}^{a''}S^{d''}u_1(\tau))
\|_{L^2}\\
\leq&
C
\langle\tau\rangle^{-1+2\delta}
N(u_1(\tau))
{\mathcal N}(u(\tau))^2.
\end{split}
\end{equation}
\noindent\underline{Estimate of $\chi_2 K_{1,4}$}\,\,\,
Using the Hardy inequality and proceeding as in (\ref{153}), we can obtain
\begin{equation}\label{155}
\|
\chi_2 K_{1,4}
\|_{L^1({\mathbb R}^3)}
\leq
C
\langle\tau\rangle^{-1+2\delta}
N(u_1(\tau))
{\mathcal N}(u(\tau))^2.
\end{equation}
\noindent\underline{Estimate of $\chi_2 K_{1,5}$}\,\,\,
Using (\ref{eqn:j3}) and (\ref{eqn:3c}), we easily get
\begin{equation}\label{156}
\|
\chi_2 K_{1,5}
\|_{L^1({\mathbb R}^3)}
\leq
C
\langle\tau\rangle^{-1+\delta}
\langle\!\langle u(\tau)\rangle\!\rangle
N(u_1(\tau))^2.
\end{equation}
\noindent\underline{Estimate of $\chi_2 K_{1,6}$}\,\,\,
By using the Hardy inequality, we can obtain
\begin{equation}\label{157}
\|
\chi_2 K_{1,6}
\|_{L^1({\mathbb R}^3)}
\leq
C
\langle\tau\rangle^{-1+\delta}
\langle\!\langle u(\tau)\rangle\!\rangle
N(u_1(\tau))^2
\end{equation}
in the same way as in (\ref{156}). 

\noindent{\bf$\cdot$ Estimate of \boldmath$K_{1,7}$ and 
\boldmath$K_{1,8}$}.\,\,\,
It is easy to get by (\ref{eqn:ellinfty}) and (\ref{eqn:j3})
\begin{equation}\label{158}
\|K_{1,7}\|_{L^1({\mathbb R}^3)}
\leq
C
\langle\tau\rangle^{-1+\delta}
\langle\!\langle u(\tau)\rangle\!\rangle
L(u_1(\tau))^2.
\end{equation}
We also get
\begin{equation}\label{159}
\begin{split}
\|K_{1,8}\|_{L^1({\mathbb R}^3)}
&
\leq
C
\langle\tau\rangle^{-1+\delta}
\langle\!\langle u(\tau)\rangle\!\rangle
\|
r^{-(3/2)+\mu}
{\bar Z}^a S u_1(\tau)
\|_{L^2({\mathbb R}^3)}
L(u_1(\tau))\\
&
\leq
C
\langle\tau\rangle^{-1+\delta}
\langle\!\langle u(\tau)\rangle\!\rangle
L(u_1(\tau))^2.
\end{split}
\end{equation}
Now we are ready to complete the proof of 
Proposition \ref{proposition4}. 
In view of (\ref{141})--(\ref{159}), 
we have only to explain how to 
handle the integral over $(0,t)$ of 
$\|K_{1,7}(\tau)\|_{L^1({\mathbb R}^3)}$ 
and 
$\|K_{1,8}(\tau)\|_{L^1({\mathbb R}^3)}$. 
Without loss of generality, 
we may suppose $1<t<T$. 
It follows from (\ref{158}) that 
\begin{equation}\label{160}
\int_0^1
\|K_{1,7}(\tau)\|_{L^1({\mathbb R}^3)}
d\tau
\leq
C
\sup_{0<\tau<T}
\langle\!\langle u(\tau)\rangle\!\rangle
\int_0^1
L(u_1(\tau))^2 d\tau,
\end{equation}
and
\begin{equation}\label{161}
\begin{split}
&\int_1^t
\|K_{1,7}(\tau)\|_{L^1({\mathbb R}^3)}
d\tau\\
\leq&
C
\sup_{0<\tau<T}
\langle\!\langle u(\tau)\rangle\!\rangle
\sum_{j=0}^N
(2^j)^{-1+\delta}
\int_{2^j}^{2^{j+1}}
L(u_1(\tau))^2 d\tau\\
\leq&
C
\sup_{0<\tau<T}
\langle\!\langle u(\tau)\rangle\!\rangle
\biggl(
\sum_{j=0}^\infty
(2^j)^{-1+\delta+2(\mu+\delta)}
\biggr)\\
&
\hspace{2.5cm}
\times
\biggl(
\sup_{1<\sigma<T}
\langle\sigma\rangle^{-\mu-\delta}
\biggl(
\int_1^\sigma
L(u_1(\tau))^2 d\tau
\biggr)^{1/2}
\biggr)^2.
\end{split}
\end{equation}
Similarly, we get by (\ref{159})
\begin{equation}\label{162}
\int_0^1
\|K_{1,8}(\tau)\|_{L^1({\mathbb R}^3)}
d\tau
\leq
C
\sup_{0<\tau<T}
\langle\!\langle u(\tau)\rangle\!\rangle
\int_0^1
L(u_1(\tau))^2 d\tau
\end{equation}
and 
\begin{equation}\label{163}
\begin{split}
\int_1^t
\|K_{1,8}(\tau)\|_{L^1({\mathbb R}^3)}
d\tau
\leq&
C
\sup_{1<\tau<T}
\langle\!\langle u(\tau)\rangle\!\rangle
\biggl(
\sum_{j=0}^\infty
(2^j)^{-1+\delta+2(\mu+\delta)}
\biggr)\\
&
\hspace{0.2cm}
\times
\left(
\sup_{1<\sigma<T}
\langle\sigma\rangle^{-\mu-\delta}
\biggl(
\int_1^\sigma
L(u_1(\tau))^2 d\tau
\biggr)^{1/2}
\right)^2.
\end{split}
\end{equation}
Since $\delta$ and $\mu$ are 
sufficiently small positive numbers, 
we see that the series in (\ref{161}) and (\ref{163}) converges. 
Therefore we have finished the proof of (\ref{140}). 
\section{Space-time $L^2$ estimates of $u_2$}
In this section, we consider 
the space-time $L^2$ estimates of $u_2$. 
We can prove:
\begin{proposition}\label{proposition5}
The following inequality holds 
for smooth local solutions to $(\ref{1})$ 
$u=(u_1,u_2)$, 
as long as they satisfy $(\ref{eqn:daiji1})$ 
for some time interval $(0,T):$
\begin{equation}\label{164}
\begin{split}
&\left(
\sup_{0<t<T}
\langle t\rangle^{-\mu-(3\delta/2)}
\biggl(
\int_0^t
L(u_2(\tau))^2
d\tau
\biggr)^{1/2}
\right)^2\\
\leq&
C
N(u_2(0))^2
+
C
\biggl(
\sup_{0<t<T}
\langle t\rangle^{-\delta}
N(u_2(t))
\biggr)
{\mathcal N}_T(u)^2\\
&+
C
\biggl(
\sup_{0<t<T}
\langle\!\langle u(t)\rangle\!\rangle
\biggr)
\biggl(
\sup_{0<t<T}
\langle t\rangle^{-\delta}
N(u_2(t))
\biggr)^2\\
&+
C\biggl(
\sup_{0<t<T}
\langle\!\langle u(t)\rangle\!\rangle
\biggr)
\left(
\sup_{0<t<T}
\langle t\rangle^{-\mu-(3\delta/2)}
\biggl(
\int_0^t
L(u_2(\tau))^2
d\tau
\biggr)^{1/2}
\right)^2.
\end{split}
\end{equation}
\end{proposition}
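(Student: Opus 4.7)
The plan is to mirror the strategy of Proposition~\ref{proposition4} with careful attention to the growth exponents. First, I will apply Lemma~\ref{lemma2.7} to the operator
\[
P := \partial_t^2-\Delta+G_2^{i2,\alpha\beta\gamma}(\partial_\gamma u_i)\partial_{\alpha\beta}^2
\]
acting on ${\bar Z}^a S^d u_2$ for the highest-order case $|a|+d=3$, $d\leq 1$; lower orders are easier. The smallness condition on $h$ required by Lemma~\ref{lemma2.7} follows from (\ref{eqn:daiji1}). This will produce eight integral terms $K_{2,1},\ldots,K_{2,8}$ of the same shape as in (\ref{141}), with ${\bar Z}^a Su_1$ replaced by ${\bar Z}^a Su_2$ throughout and with $\sum\!{}^{'}$, $\sum\!{}^{''}$ inherited from (\ref{eqn:3basic2}).

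Next, I will split each $K_{2,l}$ on the two regions $\{|x|<(\tau+1)/2\}$ and $\{|x|>(\tau+1)/2\}$ via $\chi_1$ and $\chi_2$, and treat each piece as in (\ref{142})--(\ref{157}). On the inner region I will use Lemma~\ref{lemma2.4} together with the pointwise identities (\ref{eqn:3c}), (\ref{eqn:3yokutsukau}), (\ref{eqn:3yokutsukau2}), (\ref{eqn:3yokutsukau3}) and the Hardy inequality to eliminate $\partial_t^2$ derivatives. On the outer region I will use (\ref{eqn:j2}), (\ref{eqn:j3}) and their $L^\infty_r L^4_\omega$ variants. For the non-null coefficients $G_2^{12}$ and $H_2^{11}$, I will either substitute $\partial_t^2 u_1 = \Delta u_1 + (\text{cubic})$ via the first equation in (\ref{1}), as done in (\ref{135}), or exploit the pointwise bound $\||x|\partial {\bar Z}^b u_1\|_{L^\infty}\leq\langle\!\langle u(\tau)\rangle\!\rangle$ which carries no time growth. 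The essential bookkeeping observation is that the nonlinearity in the equation for $u_2$ can contain one more $u_2$-type factor than in the $u_1$ analysis, contributing an extra $\langle\tau\rangle^\delta$ via $N(u_2(\tau))\leq\langle\tau\rangle^\delta\sup_{0<t<T}\langle t\rangle^{-\delta}N(u_2(t))$. This yields bounds of the form
\[
\|K_{2,l}(\tau)\|_{L^1({\mathbb R}^3)}\leq C\langle\tau\rangle^{-1+3\delta}\bigl(\sup_{0<t<T}\langle t\rangle^{-\delta}N(u_2(t))\bigr){\mathcal N}_T(u)^2,\quad l=1,\ldots,6,
\]
plus analogous contributions scaling as $\langle\tau\rangle^{-1+3\delta}\langle\!\langle u(\tau)\rangle\!\rangle(\sup\langle t\rangle^{-\delta}N(u_2))^2$, while $K_{2,7}$ and $K_{2,8}$ satisfy the cleaner bound $\|K_{2,l}(\tau)\|_{L^1({\mathbb R}^3)}\leq C\langle\tau\rangle^{-1+\delta}\langle\!\langle u(\tau)\rangle\!\rangle L(u_2(\tau))^2$ exactly as in (\ref{158})--(\ref{159}).

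Finally, I will apply the dyadic decomposition of $(0,T)$ as in (\ref{160})--(\ref{163}). Integrating $\langle\tau\rangle^{-1+3\delta}$ over $(0,T)$ yields $O(\langle T\rangle^{3\delta})$, so multiplying the resulting estimate by $\langle T\rangle^{-2\mu-3\delta}$ to match the desired scaling of the left-hand side absorbs the $(1+T)^{2\mu}$ factor from Lemma~\ref{lemma2.7} together with the $\langle T\rangle^{3\delta}$ from integration, producing the first three terms on the right of (\ref{164}). For $K_{2,7}$ and $K_{2,8}$, the dyadic sum $\sum_j (2^j)^{-1+\delta+2\mu+3\delta}$ converges thanks to $\mu,\delta$ being sufficiently small, contributing the last term of (\ref{164}). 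The main obstacle will be verifying that the exponent $3\delta/2$ (rather than $\delta$ as in Proposition~\ref{proposition4}) is precisely tuned to accommodate the extra $\langle\tau\rangle^\delta$ arising from the additional $N(u_2)$ factor in the cubic nonlinearity; this reduces to careful arithmetic with $\mu$ and $\delta$, together with confirming that each non-null coefficient in the $u_2$-equation can be handled by the equation-based substitution illustrated in (\ref{135}).
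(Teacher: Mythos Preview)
Your proposal is correct and follows essentially the same route as the paper, which simply states that one repeats the argument of Section~6 (Proposition~\ref{proposition4}) and omits the details; your outline supplies exactly those details with the expected extra $\langle\tau\rangle^{\delta}$ coming from the growth of $N(u_2(\tau))$, and your arithmetic with the exponent $3\delta/2$ is right. One small remark: the space--time $L^2$ argument of Section~6 never invokes the null structure (all of (\ref{142})--(\ref{159}) are proved by direct weighted Sobolev/Hardy bounds), so your planned detour through the substitution trick of (\ref{135}) for the non-null coefficients $G_2^{12}$ and $H_2^{11}$ is unnecessary here---the straightforward estimates already suffice.
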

We have only to repeat essentially the same argument as in 
Section 6. We thus omit the proof.
\section{Proof of Theorem 1.3} 
So far, we have proved that 
local solutions to (\ref{1}) defined for 
$(t,x)\in [0,T)\times{\mathbb R}^3$ 
with compactly supported smooth data 
satisfy 
\begin{equation}\label{165}
\begin{split}
&
{\mathcal N}_T(u)^2
+
{\mathcal G}_T(u)^2
+
{\mathcal L}_T(u)^2\\
\leq&
C_0
\bigl(
N_4(u_1(0))^2
+
N_4(u_2(0))^2
\bigr)\\
&+
C_1
\biggl(
\sup_{0<t<T}
\langle\!\langle u(t)\rangle\!\rangle
\bigl(
{\mathcal N}_T(u)^2
+
{\mathcal G}_T(u)^2
+
{\mathcal L}_T(u)^2
\bigr)\\
&
\hspace{1.1cm}
+{\mathcal N}_T(u)^3
+{\mathcal G}_T(u){\mathcal N}_T(u)^2
\biggr)
\end{split}
\end{equation}
for suitable constants $C_0$, $C_1>0$, provided that 
\begin{equation}\label{166}
\sup_{0<t<T}
\langle\!\langle u(t)\rangle\!\rangle
\leq
\varepsilon_3^*.
\end{equation}
See (\ref{eqn:daiji1}) for $\varepsilon_3^*$. 
In order to get the key a priori estimate (see (\ref{175}) below), 
we must show that 
$\langle\!\langle u(t)\rangle\!\rangle$ is small 
(at least for a short time interval), 
whenever $N_4(u_1(0))+N_4(u_2(0))$ is small enough. 
(See (\ref{174}) below.)

Since initial data belong to 
$C_0^\infty({\mathbb R}^3)\times C_0^\infty({\mathbb R}^3)$ 
and 
the uniqueness theorem of $C^2$-solutions 
and 
its corollary in \cite[p.\,53]{John1990} apply to 
the system (\ref{1}), 
smooth local solutions satisfy
\begin{equation}\label{167}
u_1(t,x)
=
u_2(t,x)
=0,
\quad
0\leq t<T,\,\,|x|\geq R+t,
\end{equation}
where $R>0$ is a constant such that 
$u_i(0,x)=\partial_t u_i(0,x)=0$ $(i=1,2)$ for $|x|\geq R$. 
(Remark: All the constants $C$ appearing below 
will be independent of $R$.) 
Moreover, thanks to (\ref{167}), 
we can easily verify
\begin{equation}\label{168}
{\mathcal N}(u(t))
\in
C([0,T)).
\end{equation}
(Actually, the last property can be seen as a direct consequence 
of the fact $N_4(u_i(t))^2$
$\in C^\infty([0,T))$, $i=1,2$.) 
Due to (\ref{eqn:3a}) and (\ref{168}), 
we know 
\begin{equation}\label{169}
{\mathcal N}(u(t))
\leq 
2A\varepsilon_0
\quad
(
A:=
\max
\{\sqrt{C_0},C_{KS},1\}
)
\end{equation}
(see (\ref{165}), (\ref{eqn:KSineq}) for the constants $C_0$, $C_{KS}$) 
at least for a short time interval, which means 
\begin{align*}
\{\,T>0\,:&\,\mbox{For given data $(f_i,g_i)\in 
C_0^\infty({\mathbb R}^3)\times C_0^\infty({\mathbb R}^3)$ 
$(i=1,2)$ satisfying (\ref{eqn:3a}),}\\
&
\mbox{there exists a unique smooth solution $(u_1,u_2)$ to $(\ref{1})$ 
defined for }\\
&
\mbox{all $(t,x)\in [0,T)\times{\mathbb R}^3$ 
satisfying 
${\mathcal N}(u(t))\leq 2A\varepsilon_0$ 
for any $t\in [0,T)$}
\}\ne\emptyset.
\end{align*}
We define $T_*$ as the supremum of this non-empty set. 
In order to establish the key estimates (\ref{174}) and (\ref{175}), 
we must first prove:
\begin{proposition}\label{proposition6}
Suppose $(\ref{eqn:3a})$ for compactly supported smooth data. 
Then the local solution to $(\ref{1})$ satisfies 
\begin{equation}\label{170}
{\mathcal M}(u(t))
\leq
2A{\mathcal N}(u(t)),
\quad
0<t<T_*.
\end{equation}
\end{proposition}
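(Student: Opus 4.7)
The plan is a continuity (bootstrap) argument on $[0,T_*)$ that converts the soft Klainerman--Sideris bound of Proposition \ref{proposition1} into the explicit constant $2A$. At $t=0$, the Klainerman--Sideris inequality \eqref{eqn:KSineq} reduces to $M_2(v(0))\leq C_{KS}N_2(v(0))$; applying this to $\bar Z^a u_i$ for $|a|\leq 2$ and summing yields
\[
\mathcal{M}(u(0))
\leq
C_{KS}\mathcal{N}(u(0))
\leq
A\,\mathcal{N}(u(0)),
\]
since $A\geq C_{KS}$. Because the solution is smooth and, by \eqref{167}, compactly supported at each fixed time, both $\mathcal{M}(u(t))$ and $\mathcal{N}(u(t))$ depend continuously on $t\in[0,T_*)$, so there is a largest time $\hat T\in(0,T_*]$ on which \eqref{170} holds; I would argue by contradiction that $\hat T=T_*$.

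Assume instead $\hat T<T_*$. On $[0,\hat T)$ the bootstrap hypothesis together with $\mathcal{N}(u(t))\leq 2A\varepsilon_0$ gives $\mathcal{M}(u(t))\leq 4A^2\varepsilon_0$. I would then estimate every weighted $L^\infty$ and $L^\infty_r L^4_\omega$ norm entering the definitions \eqref{54}--\eqref{55} of $\langle\!\langle u(t)\rangle\!\rangle$ by the Sobolev/trace toolkit of Lemmas \ref{lemma2.4}, \ref{lemma2.5}, and \ref{lemma2.6}. Each such norm is dominated by a finite sum of $N_1$-type and $M_2$-type quantities for derivatives in $\{\partial_x,\Omega,S\}$ of order at most three, which, after accounting for the $\langle t\rangle^{-\delta}$ weight carried by $u_2$, are controlled by $\mathcal{N}(u(t))+\mathcal{M}(u(t))$. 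This produces a constant $C_2$ with
\[
\langle\!\langle u(t)\rangle\!\rangle
\leq
C_2\bigl(\mathcal{N}(u(t))+\mathcal{M}(u(t))\bigr)
\leq
C_2(1+2A)\mathcal{N}(u(t))
\leq
2AC_2(1+2A)\varepsilon_0,
\]
and the first three slots in the definition of $\varepsilon_0$ in \eqref{eqn:3a} are arranged precisely so that this last quantity is bounded by $\min\{\varepsilon_1^*,\varepsilon_2^*,\varepsilon_3^*\}$. In particular the hypothesis of Proposition \ref{proposition1} holds on $[0,\hat T)$.

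Finally, Proposition \ref{proposition1} reads $\mathcal{M}\leq C_{KS}\mathcal{N}+C_3\langle\!\langle u\rangle\!\rangle(\mathcal{M}+\mathcal{N})$ for a constant $C_3$. Absorbing $C_3\langle\!\langle u\rangle\!\rangle\mathcal{M}$ on the left and inserting the previous bound on $\langle\!\langle u\rangle\!\rangle$ yields
\[
\mathcal{M}(u(t))
\leq
\frac{C_{KS}+C_3\langle\!\langle u(t)\rangle\!\rangle}{1-C_3\langle\!\langle u(t)\rangle\!\rangle}\,\mathcal{N}(u(t)).
\]
The last slot defining $\varepsilon_0$ in \eqref{eqn:3a}, whose numerator $1-4C_0/(9A^2)$ is positive because $A^2\geq C_0$, is designed so that this prefactor stays strictly below $2A$; hence $\mathcal{M}(u(\hat T))<2A\,\mathcal{N}(u(\hat T))$, and by continuity \eqref{170} persists past $\hat T$, contradicting maximality. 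The main obstacle is the middle step: one must verify that each of the many weighted norms in \eqref{55}, including those with the interpolated exponents $\theta_1=\tfrac12-2\mu$ and $\theta_2=\tfrac12-\eta$ and those containing an $S$, is indeed dominated by $\mathcal{N}+\mathcal{M}$ uniformly in $t$; this is exactly where Lemma \ref{lemma2.5} (together with its interpolation for $\theta\in(0,\tfrac12)$) and the trace bound \eqref{eqn:hoshiro} do the heavy lifting.
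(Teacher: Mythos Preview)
Your bootstrap outline matches the paper's argument closely, but two points deserve correction.

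First, the continuity step as you phrase it implicitly assumes $\mathcal{N}(u(\hat T))>0$: only then does the improved bound $\mathcal{M}\leq c\,\mathcal{N}$ with $c<2A$ force the \emph{strict} inequality $\mathcal{M}(u(\hat T))<2A\,\mathcal{N}(u(\hat T))$ and hence persistence past $\hat T$. If $\mathcal{N}(u(\hat T))=0$ (and thus $\mathcal{M}(u(\hat T))=0$), you have equality $0=0$ and continuity alone does not push the bootstrap past $\hat T$. The paper handles this explicitly: for nontrivial data one shows $\mathcal{N}(u(t))>0$ for all $t\in[0,T_*)$, because $\mathcal{N}(u(T_0))=0$ together with compact support forces $u(T_0,\cdot)=\partial_t u(T_0,\cdot)\equiv 0$, and the backward-in-time uniqueness theorem for $C^2$-solutions then makes the initial data vanish. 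With $\mathcal{N}>0$ secured, the paper works with the continuous ratio $\mathcal{M}/\mathcal{N}$, which makes the contradiction clean.

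Second, the slot of $\varepsilon_0$ in \eqref{eqn:3a} that closes this step is the \emph{third} one, $\dfrac{1}{2C_2C_3(2+3A)(1+2A)}$, not the fourth. The fourth slot involves $C_0,C_1$ from the full a~priori estimate \eqref{165} and is used later for \eqref{175}, not here. Concretely, feeding $\langle\!\langle u\rangle\!\rangle\leq 2AC_2(1+2A)\varepsilon_0$ and $C_{KS}\leq A$ into Proposition~\ref{proposition1} gives $\mathcal M\leq \dfrac{A+\kappa}{1-\kappa}\,\mathcal N$ with $\kappa=2AC_2C_3(1+2A)\varepsilon_0$, and the third slot guarantees $\dfrac{A+\kappa}{1-\kappa}\leq\dfrac32 A$, yielding the paper's improved bound \eqref{173}.
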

For the constant $A$, see (\ref{169}) above. 
\begin{proof}
When the initial data is identically zero and 
hence the corresponding solution identically vanishes, 
we obviously get (\ref{170}). 
We may therefore suppose without loss of generality 
that the smooth initial data is not identically zero. 
We thus have ${\mathcal N}(u(0))>0$. 
Moreover, we actually know ${\mathcal N}(u(t))>0$ 
for all $t\in (0,T_*)$. Indeed, suppose 
${\mathcal N}(u(T_0))=0$ for some 
$T_0\in (0,T_*)$. 
Since $\partial u(T_0,x)$ is identically zero and 
$u(T_0,x)$ has compact support, 
$u(T_0,x)$ and $\partial_t u(T_0,x)$ are also 
identically zero. 
Define $w(t,x):=u(T_0-t,x)$. 
We then see that $w$ satisfies a system of quasi-linear 
wave equations to which the above-mentioned 
uniqueness theorem of $C^2$-solutions \cite{John1990} applies. 
Since $w(0,x)$ and $\partial_t w(0,x)$ are identically zero, 
we know by this uniqueness theorem that 
$w$ is a trivial solution, 
which in particular means 
$w(T_0,x)$ and $\partial_t w(T_0,x)$ are 
identically zero. 
This  contradicts the fact that 
the initial data $(u(0,x), \partial_t u(0,x))$ is non-trivial. 

Note that ${\mathcal N}(u(t))>0$ for $t\in [0,T_*)$ 
in the following discussion. 
Since Lemma \ref{lemma2.8} yields 
${\mathcal M}(u(0))\leq A{\mathcal N}(u(0))$, 
that is, 
${\mathcal M}(u(0))/{\mathcal N}(u(0))\leq A$,  
and ${\mathcal M}(u(t))$, ${\mathcal N}(u(t))$, and 
${\mathcal M}(u(t))/{\mathcal N}(u(t))$ 
are continuous on the interval $[0,T_*)$, 
we see 
$$
{\mathcal M}(u(t))/{\mathcal N}(u(t))\leq 2A,
$$
that is, 
\begin{equation}\label{171}
{\mathcal M}(u(t))
\leq
2A{\mathcal N}(u(t))
\end{equation}
at least for a short time interval 
$\subset [0,T_*)$. 
It remains to show that in fact, the last inequality holds 
for {\it all} $t\in [0,T_*)$. 
Let
$$
{\hat T}
:=
\sup
\{\,T\in (0,T_*)\,:
{\mathcal M}(u(t))
\leq
2A{\mathcal N}(u(t)) 
\mbox{\,for all\,\,}
t\in [0,T)
\}.
$$
By definition we know ${\hat T}\leq T_*$. 
To show ${\hat T}=T_*$, we proceed as follows. 
Since we have 
${\mathcal N}(u(t))\leq 2A\varepsilon_0$ 
$(0<t<T_*)$, 
we obtain by Lemmas \ref{lemma2.4}--\ref{lemma2.6}
\begin{equation}\label{172}
\begin{split}
\langle\!\langle
u(t)
\rangle\!\rangle
&
\leq
C_2
\bigl(
{\mathcal N}(u(t))
+
{\mathcal M}(u(t))
\bigr)\\
&
\leq
C_2(1+2A){\mathcal N}(u(t))
\leq
2AC_2(1+2A)\varepsilon_0,
\quad
0<t<{\hat T}
\end{split}
\end{equation}
for a constant $C_2>0$. 
Because of $2AC_2(1+2A)\varepsilon_0\leq
\min\{\varepsilon_1^*,\varepsilon_2^*\}$ 
(see (\ref{eqn:3a})), 
we can use Proposition \ref{proposition1} with $T={\hat T}$ to get
$$
{\mathcal M}(u(t))
\leq
A{\mathcal N}(u(t))
+
2AC_2C_3(1+2A)\varepsilon_0
\bigl(
{\mathcal M}(u(t))+{\mathcal N}(u(t))
\bigr),
\quad
0<t<{\hat T}
$$
for a constant $C_3>0$, which yields owing to 
the definition of $\varepsilon_0$ (see (\ref{eqn:3a}))
\begin{equation}\label{173}
{\mathcal M}(u(t))
\leq
\frac{A+2AC_2C_3(1+2A)\varepsilon_0}{1-2AC_2C_3(1+2A)\varepsilon_0}
{\mathcal N}(u(t))
\leq
\frac{3}{2}A{\mathcal N}(u(t)),
\quad
0<t<{\hat T}.
\end{equation}
Since ${\mathcal M}(u(t))/{\mathcal N}(u(t))
\in C([0,T_*))$, 
we finally arrive at the conclusion ${\hat T}=T_*$. 
(If we assume ${\hat T}<T_*$, the estimate (\ref{173}) contradicts 
the definition of ${\hat T}$.) 
We have finished the proof of Proposition \ref{proposition6}. 
\end{proof}
Now we are in a position to complete the proof of 
the key a priori estimate (\ref{175}) below. 
As in (\ref{172}), 
we get by Proposition \ref{proposition6} and the definition of 
$\varepsilon_0$
\begin{equation}\label{174}
\langle\!\langle
u(t)
\rangle\!\rangle
\leq
2AC_2(1+2A)\varepsilon_0
\leq
\varepsilon_3^*,
\quad
0<t<T_*.
\end{equation}
We can use (\ref{165}) with $T=T_*$ owing to (\ref{174}). 
Using the inequalities 
${\mathcal N}(u(t))\leq 2A\varepsilon_0$, 
$\langle\!\langle
u(t)
\rangle\!\rangle
\leq
2AC_2(1+2A)\varepsilon_0$ 
$(0<t<T_*)$, 
we get from (\ref{165})
\begin{align*}
&
{\mathcal N}_{T_*}(u)^2
+
{\mathcal G}_{T_*}(u)^2
+
{\mathcal L}_{T_*}(u)^2\\
&
\leq
\frac{C_0\bigl(N_4(u_1(0))^2+N_4(u_2(0))^2\bigr)}
{1-2AC_1C_2(1+2A)\varepsilon_0-3AC_1\varepsilon_0},
\end{align*}
which yields owing to the definition of 
$\varepsilon_0$ 
\begin{equation}\label{175}
{\mathcal N}(u(t))
\leq
\frac32
A
{\mathcal N}(u(0))
\leq
\frac32
A\varepsilon_0, 
\quad
0<t<T_*.
\end{equation}
Now we are in a position to show $T_*=\infty$. 
Assume $T_*<\infty$. 
By solving (\ref{1}) with data 
$(u_i(T_*-\delta,x),\partial_t u_i(T_*-\delta,x))
\in 
C_0^\infty({\mathbb R}^3)\times C_0^\infty({\mathbb R}^3)
$ 
(see (\ref{167})) given at $t=T_*-\delta$ 
($\delta>0$ is sufficiently small), 
we can extend this local solution smoothly 
to a larger strip, say, 
$\{(t,x)\,:\,0<t<{\tilde T},\,x\in{\mathbb R}^3\}$, 
where ${\tilde T}>T_*$. 
Such a smooth local solution defined for 
$(t,x)\in (0,{\tilde T})\times {\mathbb R}^3$ 
satisfies 
${\mathcal N}(u(t))\in C([0,{\tilde T}))$. 
Moreover, because of ${\mathcal N}(u(T_*))\leq 3A\varepsilon_0/2$ 
by (\ref{175}), 
we see that there exists $T'\in (T_*,{\tilde T}]$ such that 
${\mathcal N}(u(t))\leq 2A\varepsilon_0$, 
$0<t<T'$, 
which contradicts the definition of $T_*$. 
Hence we have $T_*=\infty$. 

To complete the proof of Theorem \ref{theorem1.2}, 
we must relax the regularity of data 
and eliminate compactness of the support of data. 
Naturally, 
we employ the standard mollifier and cut-off idea 
(see, e.g., \cite[p.\,12]{Friedman} and \cite[p.\,122]{Hor}). 
Then, we easily see that, 
for any $(f_i,g_i)$ $(i=1,2)$ 
satisfying $f_1,f_2\in L^6({\mathbb R}^3)$ and 
$$
C_D\sum_{i=1,2}
D(f_i,g_i)
\leq
\frac{\varepsilon_0}{2}
$$
(see (\ref{eqn:3a}) for the constant $C_D$), 
there exists a sequence 
$(f_{i,n},g_{i,n})\in 
C_0^\infty({\mathbb R}^3)\times C_0^\infty({\mathbb R}^3)$ 
$(n=1,2,\dots)$ such that 
\begin{equation}\label{176}
C_D\sum_{i=1,2}
D(f_{i,n},g_{i,n})\leq
\varepsilon_0
\end{equation}
for sufficiently large $n$, and 
\begin{equation}\label{177}
\sum_{i=1,2}
D(f_{i,n}-f_i,g_{i,n}-g_i)
\to 0\quad
(n\to\infty).
\end{equation}
(We must keep in mind that this procedure becomes 
rather complicated 
when we employ $W_4$ (see (\ref{6})), as in \cite{HY2017}, 
to measure the size of data.) 
Thanks to (\ref{176}), 
we know that 
the Cauchy problem (\ref{1}) with data 
$(u_i(0),\partial_t u_i(0))
=(f_{i,n},g_{i,n})$ $(i=1,2)$ 
admits a unique solution, 
which is denoted by 
$u_n(t,x)=(u_{1,n}(t,x),u_{2,n}(t,x))$, 
for every large $n$. Also, we have
\begin{equation}\label{178}
{\mathcal N}_T(u_n)
+
{\mathcal G}_T(u_n)
+
{\mathcal L}_T(u_n)
\leq
C\sum_{i=1,2}
D(f_{i,n},g_{i,n})
\leq
C\varepsilon_0
\end{equation}
for all $T>0$, 
with a constant $C>0$ independent of 
$n$ and $T$. 
Furthermore, 
owing to (\ref{178}) and 
${\mathcal M}(u_n(t))\leq C{\mathcal N}(u_n(t))$ 
for $0<t<\infty$ (see (\ref{170})), we obtain 
by the same argument as 
(in fact, essentially simpler argument than) in Sections 4--7, 
with a few obvious modifications
\begin{equation}\label{179}
\begin{split}
&\sup_{t>0}
N_1(u_{1,m}(t)-u_{1,n}(t))
+
\sup_{t>0}
\langle t\rangle^{-\delta}
N_1(u_{2,m}(t)-u_{2,n}(t))\\
&
\hspace{0.1cm}
+
\biggl(
\int_0^\infty
\sum_{i=1}^3
\|
\langle t-r\rangle^{-(1/2)-\eta}
T_i
\bigl(
u_{1,m}(t)-u_{1,n}(t)
\bigr)
\|_{L^2({\mathbb R}^3)}^2dt
\biggr)^{1/2}\\
&
\hspace{0.1cm}
+
\sup_{t>0}
\langle t\rangle^{-\delta}
\biggl(
\int_0^t
\sum_{i=1}^3
\|
\langle \tau-r\rangle^{-(1/2)-\eta}
T_i
\bigl(
u_{2,m}(\tau)-u_{2,n}(\tau)
\bigr)
\|_{L^2({\mathbb R}^3)}^2d\tau
\biggr)^{1/2}\\
&
\hspace{0.1cm}
+
\sup_{t>0}
\langle t\rangle^{-\mu-\delta}
\biggl(
\int_0^t
\|
r^{-(3/2)+\mu}
\bigl(
u_{1,m}(\tau)-u_{1,n}(\tau)
\bigr)
\|_{L^2({\mathbb R}^3)}^2d\tau
\biggr)^{1/2}\\
&
\hspace{0.1cm}
+
\sup_{t>0}
\langle t\rangle^{-\mu-\delta}
\biggl(
\int_0^t
\|
r^{-(1/2)+\mu}
\partial
\bigl(
u_{1,m}(\tau)-u_{1,n}(\tau)
\bigr)
\|_{L^2({\mathbb R}^3)}^2d\tau
\biggr)^{1/2}\\
&
\hspace{0.1cm}
+
\sup_{t>0}
\langle t\rangle^{-\mu-(3\delta/2)}
\biggl(
\int_0^t
\|
r^{-(3/2)+\mu}
\bigl(
u_{2,m}(\tau)-u_{2,n}(\tau)
\bigr)
\|_{L^2({\mathbb R}^3)}^2d\tau
\biggr)^{1/2}\\
&
\hspace{0.1cm}
+
\sup_{t>0}
\langle t\rangle^{-\mu-(3\delta/2)}
\biggl(
\int_0^t
\|
r^{-(1/2)+\mu}
\partial
\bigl(
u_{2,m}(\tau)-u_{2,n}(\tau)
\bigr)
\|_{L^2({\mathbb R}^3)}^2d\tau
\biggr)^{1/2}\\
\leq&
C
\sum_{i=1,2}
\bigl(
\|
\nabla(f_{i,m}-f_{i,n})
\|_{L^2({\mathbb R}^3)}
+
\|
g_{i,m}-g_{i,n}
\|_{L^2({\mathbb R}^3)}
\bigr)
\end{split}
\end{equation}
for sufficiently large $m$, $n$, 
with a constant $C$ independent of $m$, $n$. 
(When showing (\ref{179}), 
we are supposed to choose $\varepsilon_0$ smaller than before, 
if necessary.) 
We thus see by the standard argument 
that $u_n=(u_{1,n},u_{2,n})$ has the limit that is the solution 
to (\ref{1}) with the data $(f_i,g_i)$ $(i=1,2)$ given 
at $t=0$. The proof of Theorem \ref{theorem1.2} has been completed. 
\section*{Acknowledgments} The authors are very grateful to the referees for 
careful reading of the manuscript and helpful comments. 
The first author was supported by 
JSPS KAKENHI Grant Numbers JP15K04955 and JP18K03365. 
The second author was supported by 
the National Natural Science Foundation of China (No.11801068), 
Shanghai Sailing Program (No.17YF1400700) 
and the Fundamental Research Funds for the Central Universities (No.17D110913). 


\medskip
\medskip

\end{document}